\newtheorem{assumption}[theorem]{Assumption}
\newtheorem{corrolary}[theorem]{Corollary}
\providecommand{\rset}[1]{\mathbb{R}^}
\providecommand{\norm}[1]{\lVert#1\rVert}
\begin{document}

\title{Nonasymptotic convergence of stochastic proximal point algorithms
for constrained convex optimization}

\author{\name Andrei Patrascu \email andrei.patrascu@fmi.unibuc.ro \\
       \addr Computer Science Department\\
             University of Bucharest\\
       \AND
       \name Ion Necoara \email ion.necoara@acse.pub.ro \\
       \addr Automatic Control and Systems Engineering Department\\
             University Politehnica of Bucharest
}

\editor{}

\maketitle

\begin{abstract}
A very popular approach for solving stochastic optimization problems
is the stochastic gradient descent method (SGD). Although the SGD
iteration is computationally cheap and the  practical performance of
this method may be satisfactory under certain circumstances, there
is recent evidence of its convergence difficulties and instability
for unappropriate parameters choice. To avoid these drawbacks
naturally introduced by the SGD scheme, the stochastic proximal
point algorithms have been recently considered in the literature. We
introduce a new variant of the stochastic proximal point method
(SPP) for solving stochastic convex  optimization problems subject
to (in)finite intersection of constraints satisfying a linear
regularity type condition. For the newly introduced SPP scheme we
prove new nonasymptotic convergence results. In particular, for
convex and Lipschitz continuous objective functions, we prove
nonasymptotic estimates for the rate of convergence in terms of the
expected value function gap of order
$\mathcal{O}\left(\frac{1}{k^{1/2}}\right)$, where $k$ is the
iteration counter. We also derive better  nonasymptotic bounds for
the rate of convergence in terms of  expected quadratic distance
from the iterates to the optimal solution for smooth strongly convex
objective functions, which in the best case is of order
$\mathcal{O}\left(\frac{1}{k}\right)$. Since these convergence rates
can be attained by our SPP algorithm only under some natural
restrictions on the stepsize, we also introduce a restarting variant
of SPP method that overcomes these difficulties and derive the
corresponding  nonasymptotic convergence rates. Numerical evidence
supports the effectiveness of our methods in real-world problems.
\end{abstract}

\begin{keywords}
Stochastic convex optimization, intersection of convex constraints, stochastic
proximal point method,  nonasymptotic convergence analysis.
\end{keywords}

\section{Introduction}
\noindent The randomness in  most of the practical optimization
applications  led the stochastic optimization field to become an
essential tool for many applied mathematics areas, such as machine
learning \cite{MouBac:11}, distributed optimization
\cite{NecNed:11}, sensor networks problems \cite{BlaHer:06}.  Since
the randomness usually enters the problem through the cost function
and/or the constraint set, in this paper we approach both randomness
sources and consider stochastic objective functions subject to
stochastic constraints. However, in the literature most of the time
the following unconstrained stochastic model has been considered:
\begin{equation}
\label{prob_prevwork}
\min\limits_{x \in \rset^n} F(x) = \left(\mathbb{E}[f(x;S)]\right).
\end{equation}
In the following subsections, we recall some popular numerical
optimization algorithms for solving the previous unconstrained
stochastic optimization model and set the context for our
contributions.

\subsection{Previous work}
A very popular approach for solving the unconstrained stochastic
problem  \eqref{prob_prevwork} is the stochastic gradient method
\cite{NemJud:09,MouBac:11,RosVil:14}. At each iteration $k$, the SGD
method independently samples a component function uniformly at
random $S_k$ and then takes a step along the gradient of the chosen
individual function, i.e.: \[ x^{k+1} = x^k - \mu_k \nabla
f(x^k;S_k), \] where $\mu_k$ is a positive stepsize. A particular
case of the continuous stochastic optimization model is the discrete
stochastic model, where the random variable $S$ is discrete and
thus, usually the objective function is given by the finite sum of
functional components. There exists a large amount of work in the
literature on deterministic and randomized algorithms for the
finite-sum optimization problems. Linear convergence results on a
restarted variant of SGD for finite-sum problems is given in
\cite{YanLin:16}. On the deterministic side, the incremental
gradient methods are the deterministic (cyclic) correspondent of the
SGD schemes and they were extensively analyzed in \cite{Ber:11}.
Another efficient class of algorithms for finite-sums are based on
the common idea of updating the current iterate along the aggregated
gradient step: e.g. incremental aggregated gradient (IAG)
\cite{VanGur:16} or SAGA algorithm \cite{DefBac:14}. There is a
recent nonasymptotic convergence analysis of SGD provided in
\cite{MouBac:11}, under various differentiability assumptions on the
objective function. While the SGD scheme is the method of  choice in
practice for many machine learning applications due to its superior
empirical performance, the theoretical estimates obtained in
\cite{MouBac:11} highlights several difficulties regarding its
practical limitations and robustness. For example, the stepsize is
highly constrained to small values by an exponential term from the
convergence rate which could be catastrophically increased by
uncontrolled variations of the stepsize. More precisely, the
convergence rates of SGD with decreasing stepsize
$\mu_k=\frac{\mu_0}{k}$, given for the quadratic mean
$\mathbb{E}[\norm{x^k-x^*}^2]$, where $x^*$ is an optimal solution
of \eqref{prob_prevwork}, contains certain exponential terms in the
initial stepsize of the following form \cite{MouBac:11}:
\begin{equation*}
\mathbb{E}[\norm{x^k-x^*}^2] \le \frac{C_1 e^{C_2\mu^2_0}}{k^{\alpha
\mu_0}} + \mathcal{O}\left(\frac{1}{k}\right),
\end{equation*}
for  $\mu_0 > 2/\alpha$, where  $C_1, C_2$ and $\alpha$ are some
positive constants. It is clear from previous convergence rate that
$\mathbb{E}[\norm{x^k-x^*}^2]$ can grow exponentially until the
stepsizes become sufficiently small, a behavior which can be also
observed in practical simulations.

\vspace{5pt}

\noindent Since these drawbacks are naturally introduced by the SGD
scheme, other modifications have been considered for avoiding these
aspects. One of them is the stochastic proximal point (SPP)
algorithm for solving the unconstrained stochastic problem
\eqref{prob_prevwork}   having the following iteration
\cite{RyuBoy:16}, \cite{TouTra:16}, \cite{Bia:16}:
\[ x^{k+1} = \arg \min_{z \in \rset^n} \left[ f(z; S_k) + \frac{1}{2 \mu_k} \|z - x^k\|^2 \right].   \]
Note that the SGD is the particular SPP method applied  to the
linearization of $f(z;S_k)$ in $x^k$, that is to the linear function
$l_f(z;x^k,S_k) = f(x^k;S_k) + \langle  \nabla f(x^k;S_k), z - x^k
\rangle$. Of course, when $f$ has an easily computable proximal
operator, it is natural to use $f$ instead of its linearization
$l_f$.  In \cite{RyuBoy:16}, the SPP algorithm has been applied to
problems with the objective function having Lipschitz continuous
gradient and the following \textit{restricted strong convexity}
property: \begin{align} \label{rsc} f(x;S) \ge f(y;S) + \langle
\nabla f(y;S), x-y\rangle + \frac{1}{2} \langle M_S(x-y),x-y \rangle
\quad \forall x,y \in \rset^n,
\end{align} for some matrix $M_S \succeq 0$, satisfying $\lambda := \lambda_{\min}(\mathbb{E}[M_S]) >0$. The SPP
algorithm has been analyzed in \cite{RyuBoy:16} under the above
assumptions  and the central results were the asymptotic global
convergence estimates of SPP with decreasing stepsize
$\mu_k=\frac{\mu_0}{k}$ and a nonasymptotic analysis for the SPP with
constant stepsize. In particular, it has been proved that  SPP
converges linearly to a noise-dominated region around the optimal
solution. Moreover,  the following \textit{asymptotic} convergence
rates in the quadratic mean (i.e. for a \textit{sufficiently large}
$k$) have been given:
\[ \mathbb{E}[\norm{x^k -x^*}^2] \le \left( \frac{1}{e} \right)^{\mu_0 \lambda \ln{(k+1)}}C_1 +\begin{cases}
\hspace{0.3cm} \frac{C_2}{\mu_0 \lambda - 1}\frac{1}{k} & \text{if} \quad \mu_0 \lambda > 1 \\
\hspace{0.3cm} \frac{C_2\ln(k)}{k} & \text{if} \quad \mu_0 \lambda = 1 \\
\frac{C_2}{(1 - \mu_0 \lambda) k^{\mu_0 \lambda}} & \text{if} \quad  \mu_0 \lambda<1,
\end{cases}
 \]
where $C_1$ and $C_2$ are some positive constants. With the
essential difference that no exponential terms in $\mu_0$ are
encountered, these rates of convergence have similar orders with
those for the classical SGD method with variable stepsize. Although
in this paper we make similar assumptions on the objective function,
we complement and extend the previous results. In particular, we
provide a \textit{nonasymptotic} convergence analysis of the
stochastic proximal point method  for a more general stepsize
$\mu_k=\frac{\mu_0}{k^\gamma}$, with $\gamma >0$, and for  \textit{constrained}
problems. Moreover, the Moreau smoothing framework used in the present
paper leads to more elegant and intuitive proofs. Another paper
related to the SPP algorithm is \cite{TouTra:16}, where the
considered stochastic model involves minimization of the expectation
of random particular components $f(x;S)$ defined by the composition
of a smooth function and a linear operator, i.e.: \[  f(x;S) = f(A^T_S x).
\]  Moreover, the objective function $F(x)= \mathbb{E}[f(A^T_S x)]$ needs to
satisfy $\lambda_{\min}\left(\nabla^2 F(x)\right) \ge \lambda
> 0$ for all $x \in \rset^n$. The nonasymptotic convergence of the
SPP with decreasing stepsize $\mu_k=\frac{\mu_0}{k^{\gamma}}$, with
$\gamma \in (1/2, 1]$, has been analyzed in the quadratic mean and
the following convergence rate has been derived in \cite{TouTra:16}:
\[ \mathbb{E}[\norm{x^k-x^*}^2] \le C \left(\frac{1}{1+\lambda \mu_0 \alpha}\right)^{k^{1-\gamma}}
+  \mathcal{O}\left(\frac{1}{k^{\gamma}}\right), \] where $C$ and
$\alpha$ are some positive constants. However, the analysis in
\cite{TouTra:16} cannot be trivially extended to the general convex
objective functions and complicated constraints, since for the proofs  it is essential that each
component of the objective function has the form $f(A^T_S x)$. In our paper we consider
general  convex objective functions which lack the previously
discussed structure and also (in)finite number of convex  constraints. Further, in
\cite{Bia:16} a  general asymptotic convergence analysis of several
variants of SPP scheme within operator theory settings has been
provided, under mild (strong) convexity assumptions. A particular optimization
model analyzed in \cite{Bia:16}, related to our paper, is:
\[ \min_{x} \; f(x) \quad \text{s.t.} \;\; x \in \cap_{i=1}^m X_i,
\] for which the following SPP type algorithm has been derived:
$$x^{k+1} =
\begin{cases}
\arg \min_{z \in \rset^n} \left[ f(z) + \frac{1}{2 \mu_k} \|z -
x^k\|^2 \right]
& \text{if} \; S_{k} = 0\\
 [x^k]_{X_{S_k}} & \text{otherwise},
\end{cases}
$$
where $S_k$ is a random variable on $\Omega = \{0, 1, \cdots, m\}$
with probability distribution $\mathbb{P}$. Although this scheme is
very similar with the  SPP algorithm, only the almost sure
asymptotic convergence has been provided in \cite{Bia:16}. Other
stochastic proximal (gradient) schemes together with their
theoretical guarantees are studied in several recent papers as we
further exemplify. In \cite{AtcFor:14} a perturbed proximal gradient
method is considered for solving composite optimization problems,
where the gradient is intractable and approximated by Monte Carlo
methods. Conditions on the stepsize and the Monte Carlo batch size
are derived under which the convergence is guaranteed. Two classes
of stochastic approximation strategies (stochastic iterative
Tikhonov regularization and the stochastic iterative proximal point)
are analyzed in \cite{KosNed:13} for monotone stochastic variational
inequalities and almost sure convergence results are presented. A
new  stochastic optimization method is analyzed  in \cite{YurVu:16}
for the minimization of the sum of three convex functions, one of
which has Lipschitz continuous gradient and satisfies a restricted
strong convexity condition. New convergence results are provided in
\cite{RosVil:17} for the stochastic proximal gradient algorithm
suitable for solving a large class of convex composite optimization
problems. The authors derive $\mathcal{O}\left( \frac{1}{k} \right)$
nonasymptotic bounds in expectation in the strongly convex case, as
well as almost sure convergence results under weaker assumptions. In
\cite{ComPes:16} the asymptotic behavior of a stochastic
forward-backward splitting algorithm for finding a zero of the sum
of a maximally monotone set-valued operator and a cocoercive
operator in Hilbert spaces is investigated. Weak and strong almost
sure convergence properties of the iterates are established under
mild conditions on the underlying stochastic processes.  In
\cite{Xu:11} the author presents a finite sample analysis for the
averaged SGD, which shows that it usually takes a huge number of
samples for averaged SGD to reach its asymptotic region, for
improperly chosen learning rate (stepsize). Moreover, he proposes a
simple  way to properly set the learning rate so that it takes a
reasonable amount of data for averaged SGD to reach its asymptotic
region. In \cite{NiuRec:11} the authors show through a novel
theoretical analysis that SGD can be implemented in a parallel
fashion without any locking. Moreover,  for sparse optimization
problems (meaning that most gradient updates only modify small parts
of the decision variable) the developed scheme achieves a nearly
optimal rate of convergence. A regularized stochastic version of the
BFGS method is proposed in \cite{MokRib:14} to solve convex
optimization problems. Convergence analysis shows that lower and
upper bounds on the Hessian eigenvalues of the sample functions are
sufficient to guarantee convergence of order $\mathcal{O}\left(
\frac{1}{k} \right)$. A comprehensive survey on  optimization
algorithms for machine learning problems is given recently in
\cite{BotCur:16}. Based on their experience, the authors present
theoretical results on a straightforward, yet versatile SGD
algorithm, discuss its practical behavior, and highlight
opportunities for designing new algorithms with improved
performance.

%%%%%%%%%%%%%%%%%%%%%%%%%%%%%%%%%%%%%%%%%%%%%%%%%%

\subsection{Contributions}
\noindent In this paper we consider both randomness sources (i.e.
objective function and constraints) and thus our  problem of
interest involves stochastic objective functions subject to
(in)finite intersection of constraints.   As we previously observed,
given the clear superior features of SPP algorithm over the
classical SGD scheme, we also consider an  SPP scheme for solving
our problem of interest. The main contributions of this paper are:

\vspace{0.1cm}

\noindent \textit{More general stochastic optimization model and a
new stochastic proximal point algorithm}: While most of the existing
papers from the stochastic optimization literature consider convex
models without constraints or simple constraints, that is the
projection onto the feasible set is easy,  in this paper we consider
stochastic convex optimization problems subject to (in)finite
intersection of constraints satisfying a linear regularity type
condition. It turns out that many practical applications, including
those from  machine learning, fits into this framework: e.g.
regression problems, finite sum problems, portfolio optimization
problems, convex feasibility problems, etc. For this general
stochastic optimization model we introduce a new stochastic proximal
point (SPP) algorithm. It is worth to mention that although the
analysis of an SPP method for stochastic models with complicated
constraints is non-trivial, our framework allows us to deal with
even  an infinite number of constraints.

\noindent \textit{New nonasymptotic convergence results for the SPP
method}: For the newly introduced SPP scheme we prove new
nonasymptotic convergence results. In particular, for convex and
Lipschitz continuous objective functions, we prove nonasymptotic
estimates for the rate of convergence of the SPP scheme in terms of
the expected value function gap and feasibility violation of order
$\mathcal{O}\left(\frac{1}{k^{1/2}}\right)$, where $k$ is the
iteration counter. We also derive better nonasymptotic bounds for
the rate of convergence of SPP scheme with decreasing stepsize
$\mu_k = \frac{\mu_0}{k^{\gamma}}$, with  $\gamma \in (0,1]$, for
smooth $\sigma_{f,S}-$strongly convex objective functions. For this case the
convergence rates are given in terms of expected quadratic distance
from the iterates to the optimal solution and are of order:
\[ \mathbb{E}[\norm{x^k-x^*}^2] \le C\left(\mathbb{E}\left[\frac{1}{1+\sigma_{f,S} \mu_0 }\right]\right)^{k^{1-\gamma}}
+  \mathcal{O}\left(\frac{1}{k^{\gamma}}\right). \]
Note  that the derived
rates of convergence do not contain any exponential term in $\mu_0$,
as is the case of the SGD scheme.

\noindent \textit{Restarted variant of SPP algorithm and the
corresponding convergence analysis}: Since the best complexity of
our basic SPP scheme can be attained  only under some natural
restrictions on the initial stepsize $\mu_0$,  we also introduce a
restarting stochastic proximal point  algorithm that overcomes these
difficulties. The main advantage of this restarted variant of SPP
algorithm is that it is parameter-free and  thus it is easily
implementable in practice. Under strong convexity and smoothness
assumptions, for $\gamma > 0$ and epoch counter $t$, the restarting
SPP scheme with the constant stepsize (per epoch)
$\frac{1}{t^{\gamma}}$ provides a nonasymptotic complexity of order
$\mathcal{O}\left(\frac{1}{\epsilon^{1 + \frac{1}{\gamma}}}
\right)$.

\vspace{5pt}

\noindent \textbf{Paper outline}. The paper is organized as follows.
In Section 2 the problem of interest is formulated and analyzed.
Further in Section 3, a new stochastic proximal point algorithm is
introduced and its relations with the previous work are highlighted.
We provide in Section 4 the first main result of this paper
regarding the nonasymptotic convergence of SPP in the convex case.
Further, stronger convergence results are presented in Section 5 for
smooth strongly convex objective functions. In order to improve the
convergence of the simple SPP scheme, in Section 6 we introduce a
restarted variant of SPP algorithm. Lastly, in Section 7 we provide
some preliminary numerical simulations to highlight the empirical
performance of our schemes.

\vspace{5pt}

\noindent \textbf{Notations}. We consider the space $\rset^n$
composed  by column vectors. For $x,y \in \rset^n$ denote the scalar
product  $\langle x,y \rangle = x^T y$ and Euclidean norm by
$\|x\|=\sqrt{x^T x}$. The projection operator onto the nonempty
closed convex set $X$ is denoted by $[\cdot]_X$ and the distance
from a given $x$ to set $X$ is denoted by $\text{dist}_X(x)$. We
also define the function $\varphi_{\alpha}: (0, \infty) \to
\rset^{}$:
\[  \varphi_{\alpha}(x) =
\begin{cases}
 (x^{\alpha} - 1)/\alpha, & \text{if} \; \alpha \neq 0 \\
 \log(x), & \text{if} \; \alpha = 0.
\end{cases}
\]

%%%%%%%%%%%%%%%%%%%%%%%%%%%%%%%%%%%%%%%%%%%%%%%%%%%%%%%%%%%%%%%%%%%%%%%%5
%%%%%%%%%%%%%%%%%%%%%%%%%%%%%%%%%%%%%%%%%%%%%%%%%%%%%%%%%%%%%%%%%%%%%%%%%5

\section{Problem formulation}
\noindent In many machine learning applications randomness usually
enters the problem through the cost function and/or the constraint
set. Minimization of problems having complicating constraints  can
be very challenging. This is usually alleviated by approximating the
feasible set by an (in)finite intersection of simple  sets
\cite{NecRic:16,Ned:11}. Therefore, in this paper we tackle the
following stochastic convex constrained optimization problem:
\begin{equation}
\label{convexfeas_random}
\begin{split}
F^* = & \min_{x \in \rset^n}  F(x) \;\; \left(: = \mathbb{E}[f(x;S)]\right) \\
& \text{s.t.}  \quad x \in X \;\; \left(: = \cap_{S \in \Omega} X_S\right),
\end{split}
\end{equation}
where $f(\cdot;S): \rset^n \to \rset^{}$ are convex  functions with
full  domain $\text{dom}f=\rset^n$, $X_S$ are nonempty closed convex
sets, and $S$ is a random variable with its associated probability
space $(\Omega, \mathbb{P})$.
Notice that this formulation allows us to include (in)finite number of constraints.
We denote the set of optimal solutions with $X^*$ and $x^*$ any
optimal point for \eqref{convexfeas_random}.
For the optimization problem
\eqref{convexfeas_random}  we make the following assumptions.
\begin{assumption}
\label{assump_fsmooth} For any $S \in \Omega$, the function
$f(\cdot;S)$ is proper, closed, convex and Lipschitz continuous,
that is there exists $L_{f,S} > 0$ such that
\begin{equation*}
|f(x;S) - f(y;S)| \le L_{f,S} \norm{x-y} \qquad \forall x,y \in
\rset^n.
\end{equation*}
\end{assumption}

\noindent Notice that Assumption \ref{assump_fsmooth} implies that
any subgradient $g_f(x;S) \in \partial f(x;S)$ is bounded, that is
$\norm{g_f(x;S)} \le L_{f,S}$ for all $x \in \rset^n$  and $S \in
\Omega$. For the sets we assume:

\begin{assumption}
\label{assump_feasset}
Given $S \in \Omega$, the following two properties hold:\\
\noindent $(i)$ $X_S$ are simple convex sets (i.e. projections onto these sets are easy).

\noindent $(ii)$ There exists $\kappa > 0$ such that the feasible
set $X$ satisfies linear regularity:
\begin{equation*}
\text{dist}_X^2(x) \; \le \kappa \;
\mathbb{E}[\text{dist}_{X_S}^2(x)] \qquad \forall x \in \rset^n.
\end{equation*}
\end{assumption}

\noindent Assumption \ref{assump_feasset} $(ii)$  is known in the
literature as the \textit{linear regularity property} and it is
essential for proving  linear convergence for (alternating)
projection algorithms, see \cite{NecRic:16,Ned:11}. For example,
when $X_S$ are  hyperplanes, halfspaces or  convex sets with
nonempty interior, then linear regularity property holds. The linear
regularity property is related to quadratic functional growth
condition introduced for smooth  convex functions in
\cite{NecNes:16}.   In \cite{NecNes:16} it has also been proved that
several  first order methods converge linearly  under functional
growth condition and smoothness of the objective function. Notice
that this general model \eqref{convexfeas_random} covers interesting
particular cases which we discuss below.

\subsection{Convex feasibility problem}
Let us consider the following objective function
and constraints:
\[ f(x;S) := \frac{\lambda}{2}\|x\|^2 \;\;\;
\forall S \in \Omega  \quad \text{and} \quad
X=\cap_{S \in \Omega} X_S, \] where $\lambda >0$. Then, we obtain
the least norm convex feasibility problem:
\[  \min_{x \in \rset^n} \frac{\lambda}{2}\|x\|^2 \quad \text{s.t.} \;\; x \in \cap_{S \in \Omega} X_S. \]
We can also consider another reformulation of the least norm  convex
feasibility problem:
\[ f(x;S) := \frac{\lambda_S}{2}\|x\|^2 + \mathbb{I}_{X_S}(x) \quad \forall S \in \Omega, \]
where $\lambda_S \geq 0$ and  $\mathbb{E}[\lambda_S]=\lambda$. Then,
this leads to the stochastic optimization model:
\[  \min_{x \in \rset^n}  \; \mathbb{E}\left[
\frac{\lambda_S}{2}\|x\|^2 + \mathbb{I}_{X_S}(x) \right]. \] Finding
a point in the intersection of a collection of closed convex sets
represents a modeling paradigm for solving important applications
such as data compression, neural networks  and  adaptive filtering,
see  \cite{CenChe:12} for a complete list.

\subsection{Regression problem} Let us consider the matrix
$A \in \rset^{m \times n}$. For any $S \in \Omega \subseteq \rset^{}$, let us define:
\[ f(x;S):=  \ell(A_S^T x), \] where $\ell$ is some loss function.   This
results in the following constrained optimization model:
\begin{align*}
\min\limits_{x \in \rset^n} \quad \mathbb{E}[\ell(A_S^T x)] \quad
\text{s.t.} \;\; x \in \cap_{S \in \Omega} X_S.
\end{align*}
Many learning problems can be modeled into this  form, see e.g.
\cite{TouTra:16}. This type of  optimization model has been also
considered in \cite{Bia:16, RosVil:14}.

\subsection{Finite sum problem} Let $\Omega = \{1, \cdots, m\}$ and
$\mathbb{P}$ be the uniform discrete probability distribution on
$\Omega$. Further, we consider convex functions $f(x;i) =
\ell_i(x)$. Then, the following constrained  finite sum problem is
recovered:
\begin{align*}
\min\limits_{x \in \rset^n} \quad \frac{1}{m} \sum\limits_{i=1}^m \ell_i(x) \quad \text{s.t.} \;\; x \in \cap_{i =1}^m X_i.
\end{align*}
This constrained optimization model appears often  in statistics and
machine learning applications, where the functions $\ell_i(\cdot)$
typically represent loss functions associated to a given estimator
and the feasible set comes from physical constraints, see e.g.
\cite{DefBac:14, VanGur:16, YurVu:16}. It is also a particular
problem of a more general optimization model considered in
\cite{Bia:16}.

%%%%%%%%%%%%%%%%%%%%%%%%%%%%%%%%%%%%%%%%%%%%%%%%%%%%%%%%%%%%%%%
%%%%%%%%%%%%%%%%%%%%%%%%%%%%%%%%%%%%%%%%%%%%%%%%%%%%%%%%%%%%%%

\section{Stochastic Proximal Point algorithm}
In this section we propose solving the optimization problem
\eqref{convexfeas_random} through stochastic proximal point type
algorithms.  It has been proved in \cite{NecRic:16} that the
optimization problem \eqref{convexfeas_random} can be equivalently
reformulated under Assumption \ref{assump_feasset} into the
following stochastic optimization problem:
\begin{align}
\label{convexfeas_random_ref}
\min\limits_{x \in \rset^n} \;\; & F(x) \; \left(:= \mathbb{E} \left[ f(x;S) + \mathbb{I}_{X_S}(x) \right] \right).
\end{align}
Since each component of the stochastic objective is nonsmooth, a first possible
approach is to apply  stochastic subgradient methods
\cite{DucSin:09,MouBac:11}, which would yield simple algorithms, but having
usually a relatively slow sublinear convergence rate. Therefore, for
more robustness, one can deal with the nonsmoothness through the
Moreau smoothing framework. However, there are multiple potential
approaches in this direction. For a given smoothing parameter
$\mu>0$, we can smooth each functional component and the associated
indicator function together to obtain the following smooth approximation
for the nonsmooth convex function $f(\cdot;S) + \mathbb{I}_{X_S}$:
\[ \bar f_{\mu}(x;S) := \min\limits_{z \in \rset^n} \; f(z;S) +  \mathbb{I}_{X_S}(z) + \frac{1}{2\mu}\norm{z-x}^2. \]
Let us  denote the corresponding prox operator by
$\bar{z}_{\mu}(x;S) = \arg\min\limits_{z \in \rset^n} f(z;S) +
\mathbb{I}_{X_S}(z) + \frac{1}{2\mu} \norm{z - x}^2$.  It is known
that any Moreau approximation $\bar f_{\mu}(\cdot;S)$ is
differentiable having the gradient  $\nabla \bar f_{\mu}(x;S) =
\frac{1}{\mu}(x - \bar{z}_{\mu}(x;S))$. Moreover,  the gradient is
Lipschitz continuous with constants bounded by $\frac{1}{\mu}$.
Then, instead of solving nonsmooth problem
\eqref{convexfeas_random_ref} we can consider solving the smooth
approximation:
\[ \min\limits_{x \in \rset^n} \;\;  \bar F_{\mu}(x) \;\; \left(:= \mathbb{E}[\bar f_{\mu}(x;S)] \right).   \]
Notice that we can easily apply the classical SGD strategy  to the
newly created smooth objective function, which results in the
following iteration:
\begin{align*}
x^{k+1} & = x^k - \mu_k \nabla \bar f_{\mu_k}(x^k;S_k) = \bar{z}_{\mu_k}(x^k;S_k)\\
& = \arg\min\limits_{z \in \rset^n} \; f(z;S_k) +
\mathbb{I}_{X_{S_k}}(z) +  \frac{1}{2\mu_k}\norm{z-x^k}^2.
\end{align*}
However, the nonasymptotic analysis technique considered in our
paper  encounters  difficulties with this variant of the algorithm.
The main difficulty consists in proving the bound $\norm{\nabla \bar
f_{\mu}(x;S)} \le \norm{g_{f(\cdot;S) + \mathbb{I}_{X_S}}(x)}$ for
all $x \in \rset^n$, where $g_{f(\cdot;S) + \mathbb{I}_{X_S}}(x) \in
\partial (f(\cdot;S) + \mathbb{I}_{X_S})(x)$. We believe that such a
bound is essential in our convergence analysis and we leave for
future work the analysis of this iterative scheme. Therefore, we
considered a second approach based on a smooth Moreau approximation
only for  the functional component $f(\cdot;S)$ and keeping the
indicator function $\mathbb{I}_{X_S}$ in its original form, that is:
\[ f_{\mu} (x;S) := \min\limits_{z \in \rset^n}  \;  f(z;S) + \frac{1}{2\mu} \norm{z - x}^2 \]
for some  smoothing parameter $\mu>0$. Then, instead of solving nonsmooth problem \eqref{convexfeas_random_ref},
we solve the following composite approximation:
\begin{align}
\label{smooth_linearoptprob}
\min\limits_{x \in \rset^n} \;\;  F_{\mu}(x) \;\; \left(:= \mathbb{E}[ f_{\mu}(x;S) + \mathbb{I}_{X_{S}}(x)]\right).
\end{align}
Let us  denote the corresponding prox operator by:
\begin{align*}
z_{\mu}(x;S) &= \arg\min\limits_{z \in \rset^n} f(z;S) + \frac{1}{2\mu} \norm{z - x}^2.
\end{align*}
Further, on the stochastic composite approximation
\eqref{smooth_linearoptprob} we can apply the stochastic projected
gradient method,  which leads to a stochastic proximal point like
scheme for solving the original problem \eqref{convexfeas_random}:
\begin{center}
\framebox{
\parbox{11 cm}{
\begin{center}
\textbf{ Algorithm {\bf SPP} ($x_0, \{\mu_k\}_{k\ge 0}$) }
\end{center}
{ For $k\geq 1$ compute:}\\
1. Choose randomly $S_k \in \Omega$ w.r.t. probability distribution $\mathbb{P}$\\
2. Update: $y^{k} = z_{\mu_k}(x^{k};S_k)$ and $x^{k+1} = [y^{k}]_{X_{S_k}}$\\
% 3. If the stopping criterion is satisfied, then STOP and output:
% $$x^k \quad \text{or} \quad \hat{x}^k = \frac{1}{\sum_{i=0}^{k-1} \mu_i}\sum_{i=0}^{k-1} \mu_i x^i,$$ otherwise $k \leftarrow k+1$ and go to step $1$.
}}
\end{center}
where $x^0 \in \rset^n$ is some initial starting point and
$\{\mu\}_{k \ge 0}$ is a nonincreasing positive sequence of
stepsizes.  We consider that the algorithm SPP returns either the
last point $x^k$ or  the average point $\hat{x}^k =
\frac{1}{\sum_{i=0}^{k-1} \mu_i}\sum_{i=0}^{k-1} \mu_i x^i$ when it
is called as a subroutine. Since the update rule of the positive
smoothing (stepsize) sequence $\{\mu_k\}_{k \ge 0}$ strongly
contributes to the convergence of the scheme, we discuss in the
following sections the most advantageous choices. We first prove the
following useful auxiliary result:

\begin{lemma}
\label{lemma_gradmiu_grad} Let $\mu \!> 0$, $S \!\in \Omega$. Then,
for any $g_{f}(x;S) \in \partial f(x;S)$, the following  holds:
\begin{equation*}
\norm{\nabla f_{\mu}(x;S)} \le \norm{g_{f}(x;S)} \qquad \forall x
\in \rset^n.
\end{equation*}
\end{lemma}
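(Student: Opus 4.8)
\noindent The plan is to combine the optimality condition of the proximal subproblem with the monotonicity of the subdifferential of $f(\cdot;S)$. First I would write down the first-order optimality condition characterizing $z_\mu(x;S)$. Since $z_\mu(x;S)$ minimizes the strongly convex function $z \mapsto f(z;S) + \frac{1}{2\mu}\norm{z-x}^2$, it satisfies $0 \in \partial f(z_\mu(x;S);S) + \frac{1}{\mu}\left(z_\mu(x;S) - x\right)$. Recalling the identity $\nabla f_\mu(x;S) = \frac{1}{\mu}\left(x - z_\mu(x;S)\right)$ noted above, this rearranges to $\nabla f_\mu(x;S) \in \partial f(z_\mu(x;S);S)$; that is, the Moreau gradient at $x$ is precisely a subgradient of $f(\cdot;S)$ taken at the prox point $z_\mu(x;S)$, not at $x$ itself.

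\noindent With this in hand, set $z := z_\mu(x;S)$, $v := \nabla f_\mu(x;S) \in \partial f(z;S)$ and fix an arbitrary $g := g_f(x;S) \in \partial f(x;S)$. Monotonicity of the subdifferential then gives $\langle v - g, z - x\rangle \ge 0$, equivalently $\langle v, x - z\rangle \le \langle g, x - z\rangle$. Explicitly, this follows by adding the two defining subgradient inequalities $f(x;S) \ge f(z;S) + \langle v, x-z\rangle$ and $f(z;S) \ge f(x;S) + \langle g, z - x\rangle$. Now I would substitute the relation $x - z = \mu v$, which turns the left-hand side into $\mu\norm{v}^2$ and the right-hand side into $\mu \langle g, v\rangle$. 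Cancelling $\mu > 0$ and applying the Cauchy--Schwarz inequality yields $\norm{v}^2 \le \langle g, v\rangle \le \norm{g}\,\norm{v}$, and dividing by $\norm{v}$ gives $\norm{\nabla f_\mu(x;S)} \le \norm{g_f(x;S)}$, as required.

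\noindent The computation itself is short and elementary, so I expect no genuine obstacle; the only real insight is the observation in the first step that $\nabla f_\mu(x;S)$ is a subgradient of $f(\cdot;S)$ at $z_\mu(x;S)$, after which monotonicity of $\partial f(\cdot;S)$ does all the work. The one point requiring a small amount of care is the degenerate case $\norm{v}=0$, where the claimed bound holds automatically and the division step must simply be skipped. Note also that the argument does not use Assumption \ref{assump_fsmooth} directly, so the bound holds for any proper closed convex $f(\cdot;S)$ and any choice of subgradient $g_f(x;S)$.
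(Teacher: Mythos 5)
Your argument is correct and is essentially the paper's own proof: both start from the optimality condition identifying $\nabla f_\mu(x;S)=\frac{1}{\mu}(x-z_\mu(x;S))$ as a subgradient of $f(\cdot;S)$ at $z_\mu(x;S)$, then use monotonicity of the subdifferential (convexity) to pass to $\langle g_f(x;S), x - z_\mu(x;S)\rangle$ and finish with Cauchy--Schwarz. The only cosmetic differences are that you phrase the key inequality in terms of $v=\nabla f_\mu(x;S)$ rather than $\frac{1}{\mu}\norm{z_\mu(x;S)-x}^2$ and explicitly flag the degenerate case $\norm{v}=0$.
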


\begin{proof}
The optimality condition of  problem $\min\limits_{z \in \rset^n}
f(z;S) + \frac{1}{2\mu}\norm{x-z}^2$ is given by:
\begin{equation*}
  \frac{1}{\mu}\left( x - z_{\mu}(x;S)\right) \in \partial f(z_{\mu}(x;S);S).
\end{equation*}
The above inclusion easily implies that there is
$g_f(z_{\mu}(x;S);S) \in \partial f(z_{\mu}(x;S);S)$  such that:
\begin{align*}
\frac{1}{\mu}\norm{z_{\mu}(x;S)-x}^2
& =  \langle g_f(z_{\mu}(x;S);S), x - z_{\mu}(x;S)\rangle \\
& = \langle g_f(x;S), x - z_{\mu}(x;S)\rangle + \langle
g_f(z_{\mu}(x;S);S) - g_f(x;S),
x -z_{\mu}(x;S)\rangle \\
& \le \langle g_f(x;S), x - z_{\mu}(x;S)\rangle,
\end{align*}
where in the last inequality we used the convexity of $f$. Lastly,
by applying the Cauchy-Schzwarz inequality in the right hand side we
get the above statement.
\end{proof}

\noindent The following two well-known inequalities, which can be
found in \cite{Bul:03}, will be also useful in the sequel:
\begin{lemma}[Bernoulli] \label{lemma_bernoulli}
Let $t \in [0,1]$ and $x \in  [-1, \infty)$, then the following
holds:
\begin{equation*}
(1 + x)^t \le 1 + tx.
\end{equation*}
\end{lemma}

\begin{lemma}[Minkowski] \label{lemma_minkowski}
Let $x$  and $y$ be two random variables. Then, for any $1 \le p <
\infty$, the following inequality holds:
\begin{equation*}
\left( \mathbb{E}[|x + y|^p ] \right)^{1/p} \le \left(
\mathbb{E}[|x|^p] \right)^{1/p}+ \left(
\mathbb{E}[|y|^p]\right)^{1/p}.
\end{equation*}
\end{lemma}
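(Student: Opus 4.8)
The plan is to reduce the claim to H\"older's inequality, which is the standard route to Minkowski's inequality in $L^p$. First I would dispose of the boundary case $p=1$: here $|x+y| \le |x| + |y|$ pointwise, so monotonicity and linearity of the expectation give $\mathbb{E}[|x+y|] \le \mathbb{E}[|x|] + \mathbb{E}[|y|]$ directly. From now on I assume $1 < p < \infty$ and let $q = p/(p-1)$ denote the conjugate exponent, so that $1/p + 1/q = 1$ and, crucially, $(p-1)q = p$.

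Next I would address finiteness, since the argument below divides by a power of $\mathbb{E}[|x+y|^p]$. By convexity of $t \mapsto t^p$ on $[0,\infty)$ one has the elementary bound $|x+y|^p \le 2^{p-1}(|x|^p + |y|^p)$, hence $\mathbb{E}[|x+y|^p] \le 2^{p-1}(\mathbb{E}[|x|^p] + \mathbb{E}[|y|^p])$; so whenever the right-hand side of the claimed inequality is finite, $\mathbb{E}[|x+y|^p]$ is finite as well. If $\mathbb{E}[|x+y|^p] = 0$ the inequality is trivial, so I may assume this quantity lies in $(0,\infty)$.

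The core step starts from the pointwise triangle inequality, factored as
\[ |x+y|^p = |x+y|\,|x+y|^{p-1} \le |x|\,|x+y|^{p-1} + |y|\,|x+y|^{p-1}. \]
Taking expectations and applying H\"older's inequality with exponents $p$ and $q$ to each of the two resulting terms, and using $(p-1)q = p$, yields
\[ \mathbb{E}[|x+y|^p] \le \left( \left(\mathbb{E}[|x|^p]\right)^{1/p} + \left(\mathbb{E}[|y|^p]\right)^{1/p} \right) \left( \mathbb{E}[|x+y|^p] \right)^{1/q}. \]
Dividing both sides by the finite, nonzero factor $\left(\mathbb{E}[|x+y|^p]\right)^{1/q}$ and using $1 - 1/q = 1/p$ gives exactly the stated bound.

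The only genuine ingredient is H\"older's inequality itself; everything else is bookkeeping. I expect the main obstacle to be purely technical, namely justifying the division, which is what forces the separate treatment of the degenerate cases $\mathbb{E}[|x+y|^p] \in \{0,\infty\}$ together with the a priori finiteness bound above. Since the paper invokes \cite{Bul:03} for this inequality as a known fact, one could alternatively simply cite it; the sketch above merely records the self-contained derivation.
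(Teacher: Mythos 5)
Your derivation is correct and is the standard H\"older-based proof of Minkowski's inequality, with the degenerate cases ($p=1$, and $\mathbb{E}[|x+y|^p]\in\{0,\infty\}$) handled properly. The paper itself supplies no proof --- it states the lemma as a known fact and cites \cite{Bul:03} --- so there is nothing to compare against; your self-contained argument (or, as you note, simply citing the reference) is entirely adequate.
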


%%%%%%%%%%%%%%%%%%%%%%%%%%%%%%%%%%%%%%%%%%%%%%%%%%%%%%%%%%%%%%%
%%%%%%%%%%%%%%%%%%%%%%%%%%%%%%%%%%%%%%%%%%%%%%%%%%%%%%%%%%%

\section{Nonasymptotic complexity analysis of SPP: convex objective function}
In this section analyze, under  Assumptions \ref{assump_fsmooth} and
\ref{assump_feasset}, the iteration complexity of SPP scheme with
nonincreasing stepsize rule to approximately solve the optimization
problem \eqref{convexfeas_random}. In order to prove this
nonasymptotic  result, we first define $\hat{\mu}_{1,k} =
\sum\limits_{i=0}^{k-1} \mu_i$ and $\hat{\mu}_{2,k} =
\sum\limits_{i=0}^{k-1} \mu_i^2$. Moreover, denote by ${\mathcal F}_{k}$
the history of random choices $\{S_k\}_{k\ge 0}$, i.e. ${\mathcal F}_{k}=\{S_0, \cdots, S_{k}\}$.

\begin{lemma}\label{lemma_convexcase}
Let Assumptions \ref{assump_fsmooth} and \ref{assump_feasset} hold
and  the sequences $\{x^k, y^k\}_{ k \ge 0}$ be generated by SPP
scheme with  positive stepsize $\{\mu_k\}_{k \ge 0}$. If we  define
the average sequences $\hat{x}^k  =
\frac{1}{\hat{\mu}_{1,k}}\sum_{i=0}^{k-1} \mu_i x^i$ and $\hat{y}^k
= \frac{1}{\hat{\mu}_{1,k}}\sum_{i=0}^{k-1} \mu_i y^i$, then  the
following relation holds:
\begin{align*}
\mathbb{E}\left[\text{dist}_{X_{S_k}}^2 (\hat{y}^k) \right] \ge
\frac{1}{\kappa} \mathbb{E}\left[\text{dist}_{X}^2
(\hat{x}^k)\right] - \frac{\hat{\mu}_{2,k}}{\hat{\mu}_{1,k}}
\sqrt{\mathbb{E}[\text{dist}_{X}^2(\hat{x}^k)]} \; \sqrt{\mathbb{E}[L_{f,S}^2]}.
 \end{align*}
\end{lemma}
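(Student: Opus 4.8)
The plan is to transfer the estimate from the averaged prox-iterate $\hat{y}^k$ to the averaged iterate $\hat{x}^k$, then use the linear regularity Assumption~\ref{assump_feasset}$(ii)$ to turn the per-set squared distance into a distance to the whole feasible set $X$, and finally control the discrepancy $\hat{y}^k-\hat{x}^k$ by the step estimate of Lemma~\ref{lemma_gradmiu_grad}. The starting point is that for each realization of $S_k$ the map $\text{dist}_{X_{S_k}}^2(\cdot)$ is convex and differentiable with gradient $2(\cdot-[\cdot]_{X_{S_k}})$ (equivalently, $\text{dist}_{X_{S_k}}(\cdot)$ is nonexpansive). Evaluating the first-order convexity inequality at $\hat{y}^k$ about $\hat{x}^k$ and applying Cauchy--Schwarz to the resulting inner product yields the pointwise bound
\[ \text{dist}_{X_{S_k}}^2(\hat{y}^k) \ge \text{dist}_{X_{S_k}}^2(\hat{x}^k) - 2\,\text{dist}_{X_{S_k}}(\hat{x}^k)\,\norm{\hat{y}^k-\hat{x}^k}. \]

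I would then take total expectation and handle the two terms separately. For the leading term, note that $\hat{x}^k$ is $\mathcal{F}_{k-1}$-measurable while $S_k$ is drawn independently; conditioning on $\mathcal{F}_{k-1}$ and applying linear regularity to the now-fixed point $\hat{x}^k$ gives $\mathbb{E}[\text{dist}_{X_{S_k}}^2(\hat{x}^k)] \ge \frac{1}{\kappa}\mathbb{E}[\text{dist}_X^2(\hat{x}^k)]$, which is exactly the first term of the claim. For the cross term, since $X=\cap_{S\in\Omega}X_S\subseteq X_{S_k}$ we have $\text{dist}_{X_{S_k}}(\hat{x}^k)\le\text{dist}_X(\hat{x}^k)$, so it suffices to estimate $\mathbb{E}[\text{dist}_X(\hat{x}^k)\,\norm{\hat{y}^k-\hat{x}^k}]$. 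From the SPP update $y^i-x^i=-\mu_i\nabla f_{\mu_i}(x^i;S_i)$ together with Lemma~\ref{lemma_gradmiu_grad} and Assumption~\ref{assump_fsmooth} one gets $\norm{y^i-x^i}\le\mu_i L_{f,S_i}$, whence $\norm{\hat{y}^k-\hat{x}^k}\le\frac{1}{\hat{\mu}_{1,k}}\sum_{i=0}^{k-1}\mu_i^2 L_{f,S_i}$ by the triangle inequality. Applying Cauchy--Schwarz in expectation to split off $\text{dist}_X(\hat{x}^k)$ and then Minkowski's inequality (Lemma~\ref{lemma_minkowski}) to move the sum outside the $L^2$-norm, using that the $S_i$ are identically distributed so $\mathbb{E}[L_{f,S_i}^2]=\mathbb{E}[L_{f,S}^2]$, gives
\[ \mathbb{E}[\text{dist}_X(\hat{x}^k)\,\norm{\hat{y}^k-\hat{x}^k}] \le \frac{\hat{\mu}_{2,k}}{\hat{\mu}_{1,k}}\,\sqrt{\mathbb{E}[\text{dist}_X^2(\hat{x}^k)]}\,\sqrt{\mathbb{E}[L_{f,S}^2]}. \]
Collecting the three estimates delivers the stated inequality.

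The step I expect to be most delicate is the cross term. The factors $\text{dist}_X(\hat{x}^k)$ and $\sum_i\mu_i^2 L_{f,S_i}$ are correlated, since both are built from $S_0,\dots,S_{k-1}$, so one must decouple them by Cauchy--Schwarz over the entire probability space rather than conditionally, and it is precisely Minkowski's inequality that prevents the $L^2$-norm of the sum from degrading into a coarser bound and keeps the clean factor $\hat{\mu}_{2,k}\sqrt{\mathbb{E}[L_{f,S}^2]}$. Tracking the constant produced by the squaring step while matching the right-hand side is the one point requiring care; the measurability bookkeeping for the leading term and the monotonicity $\text{dist}_{X_{S_k}}\le\text{dist}_X$ used in the cross term are otherwise routine.
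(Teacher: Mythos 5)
Your proposal follows essentially the same route as the paper's proof: the first-order convexity inequality for $x \mapsto \tfrac{1}{2}\text{dist}_{X_{S_k}}^2(x)$ evaluated at $\hat y^k$ about $\hat x^k$, the identity $\hat y^k - \hat x^k = -\tfrac{1}{\hat\mu_{1,k}}\sum_{i=0}^{k-1}\mu_i^2 \nabla f_{\mu_i}(x^i;S_i)$ combined with Lemma~\ref{lemma_gradmiu_grad} and Assumption~\ref{assump_fsmooth} to bound the displacement by $\tfrac{1}{\hat\mu_{1,k}}\sum_i \mu_i^2 L_{f,S_i}$, linear regularity applied after conditioning on $\mathcal{F}_{k-1}$ for the leading term, and Cauchy--Schwarz together with $X \subseteq X_{S_k}$ (hence $\text{dist}_{X_{S_k}} \le \text{dist}_X$) for the cross term, with your use of Minkowski's inequality being only a slightly more careful bookkeeping of the weighted average of the $L_{f,S_i}$ than the paper's. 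The delicate point you flag is real: carried out honestly, your pointwise bound $\text{dist}_{X_{S_k}}^2(\hat y^k) \ge \text{dist}_{X_{S_k}}^2(\hat x^k) - 2\,\text{dist}_{X_{S_k}}(\hat x^k)\,\norm{\hat y^k - \hat x^k}$ produces the stated inequality with the constant $\tfrac{2\hat\mu_{2,k}}{\hat\mu_{1,k}}$ in the cross term, and the paper reaches the smaller constant $\tfrac{\hat\mu_{2,k}}{\hat\mu_{1,k}}$ only through an unexplained halving when it passes from the inner-product estimate to its displayed $\tfrac{1}{2}\text{dist}^2$ inequality --- so the factor-of-two discrepancy is a constant slip on the paper's side rather than a gap in your argument, and it is immaterial for the downstream rate results.
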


\begin{proof}
By using the convexity of the function $\mathbb{I}_{\mu,S}(x) = \frac{1}{2\mu}\text{dist}_{X_S}^2(x)$ and taking the
conditional expectation w.r.t. $S_k$ over the history ${\mathcal F}_{k-1}=\{S_0, \cdots, S_{k-1}\}$, we get:
\begin{align*}
\mathbb{E}[\mathbb{I}_{1,S_k}(\hat{y}^k)|\mathcal{F}_{k-1}]  \ge \mathbb{E}\left[\mathbb{I}_{1,S_k}(\hat{x}^k)
+ \langle \nabla \mathbb{I}_{1,S_k}(\hat{x}^k), \hat{y}^k - \hat{x}^k\rangle|\mathcal{F}_{k-1}\right].
\end{align*}
Taking further the expectation over $\mathcal{F}_{k-1}$ we obtain:
\begin{align*}
\mathbb{E}[\mathbb{I}_{1,S_k}(\hat{y}^k)]
&  \ge \mathbb{E}\left[\mathbb{I}_{1,S_k}(\hat{x}^k)\right] + \mathbb{E}\left[\langle \nabla \mathbb{I}_{1,S_k}(\hat{x}^k), \hat{y}^k - \hat{x}^k\rangle\right] \nonumber \\
& \ge \mathbb{E}\left[\mathbb{I}_{1,S_k}(\hat{x}^k)\right] + \frac{\mathbb{E}\left[\langle \nabla \mathbb{I}_{1,S_k}(\hat{x}^k), \sum\limits_{i=0}^{k-1} \mu_i^2 \nabla f_{\mu_i}(x^i;S_i)\rangle \right ]}{\hat{\mu}_{1,k}} \nonumber \\
& \ge \mathbb{E}\left[ \mathbb{I}_{1,S_k}(\hat{x}^k)\right] - \mathbb{E}\left[\frac{\hat{\mu}_{2,k}}{\hat{\mu}_{1,k}} \norm{\nabla \mathbb{I}_{1,S_k}(\hat{x}^k)} \left\|\sum\limits_{i=0}^{k-1}
\frac{\mu_i^2}{\hat{\mu}_{2,k}} \nabla f_{\mu_i}(x^i;S_i)\right\|\right] \nonumber\\
& \ge \mathbb{E}\left[ \mathbb{I}_{1,S_k}(\hat{x}^k)\right] - \frac{\hat{\mu}_{2,k}}{\hat{\mu}_{1,k}} \mathbb{E}\left[\norm{\nabla \mathbb{I}_{1,S_k}(\hat{x}^k)} \sum\limits_{i=0}^{k-1} \frac{\mu_i^2}{\hat{\mu}_{2,k}}
\left\| \nabla f_{\mu_i}(x^i;S_i)\right\|\right].
\end{align*}
Further, using Lemma  \ref{lemma_gradmiu_grad}, Assumption \ref{assump_feasset} and Cauchy-Schwartz inequality, we have:
\begin{align*}
\mathbb{E}\left[
\frac{1}{2}\text{dist}_{X_{S_k}}^2(\hat{y}^k)\right] &
\overset{\text{Lemma} \; \ref{lemma_gradmiu_grad}}{\ge}
\mathbb{E}\left[
\frac{1}{2}\text{dist}_{X_{S_k}}^2(\hat{x}^k)\right] -
\frac{\hat{\mu}_{2,k}}{2\hat{\mu}_{1,k}} \mathbb{E}\left[\text{dist}_{X_{S_k}}(\hat{x}^k) L_{f,S_k}\right] \nonumber\\
& \overset{\text{Assump.} \; \ref{assump_feasset}}{\ge}
\frac{1}{2\kappa}\mathbb{E}\left[\text{dist}_{X}^2(\hat{x}^k)\right]
- \frac{\hat{\mu}_{2,k}}{2\hat{\mu}_{1,k}}
\sqrt{\mathbb{E}[\text{dist}_{X}^2(\hat{x}^k)]}\sqrt{\mathbb{E}[ L_{f,S}^2]},
\end{align*}
which proves the statement of the lemma.
\end{proof}

\noindent Now, we are ready to derive the convergence rate of SPP in
the average sequence  $\hat{x}^k$:

\begin{theorem}\label{th_convexcase}
Under Assumptions \ref{assump_fsmooth} and \ref{assump_feasset}, let
the sequence $\{x^k\}_{k \ge 0}$ be generated by the algorithm SPP
with nonincreasing positive stepsize $\{\mu_k\}_{k \ge 0}$. Define
the average sequence $\hat{x}^k = \frac{1}{\hat{\mu}_{1,k}}\sum\limits_{i=0}^{k-1}\mu_i x^i$ and
$\mathcal{R}_{\mu} =  \mu_0\kappa(\|x^0 - x^*\|^2 + \mathbb{E}[L_{f,S}^2]\hat{\mu}_{2,k})$. Then,
the following estimates for suboptimality and feasibility violation hold:
\begin{equation}
\label{thconvexcase_subopt}
\begin{split}
 - \kappa \mathbb{E}[L_{f,S}^2] & \left(\frac{\hat{\mu}_{2,k}}{\hat{\mu}_{1,k}}
\!+\!  2\mu_0 \!\right)   -  \sqrt{\mathbb{E}[L_{f,S}^2] \
\frac{\mathcal{R}_{\mu}}{\hat{\mu}_{1,k}}}
\le   \mathbb{E}[F(\hat{x}^k)] - F^* \le  \frac{\mathcal{R}_{\mu} }{2\mu_0 \kappa \hat{\mu}_{1,k} }\\
&\quad  \mathbb{E}[\text{dist}_{X}^2(\hat{x}^k)]
 \le 2\kappa^2 \mathbb{E}[L_{f,S}^2]  \left(\frac{\hat{\mu}_{2,k}}{\hat{\mu}_{1,k}}+2\mu_0\right)^2
+  \frac{2\mathcal{R}_{\mu}}{\hat{\mu}_{1,k}}.
\end{split}
\end{equation}
\end{theorem}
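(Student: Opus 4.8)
The plan is to derive a single energy recursion for $\mathbb{E}[\norm{x^k-x^*}^2]$ that exposes both the suboptimality gap and the per-step projection residual, telescope it, and then read off the four estimates using Jensen's inequality together with Lemma~\ref{lemma_convexcase}. For one iteration I would start from the prox optimality $\frac{1}{\mu_k}(x^k-y^k)\in\partial f(y^k;S_k)$, so that $g:=\frac{1}{\mu_k}(x^k-y^k)$ is a subgradient at $y^k$ with $\norm{g}\le L_{f,S_k}$. Since $x^*\in X\subseteq X_{S_k}$, the projection $x^{k+1}=[y^k]_{X_{S_k}}$ obeys $\norm{x^{k+1}-x^*}^2\le\norm{y^k-x^*}^2-\text{dist}_{X_{S_k}}^2(y^k)$. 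Expanding $\norm{y^k-x^*}^2$ around $x^k$ via $x^k-y^k=\mu_k g$, applying the subgradient inequality $\langle g,y^k-x^*\rangle\ge f(y^k;S_k)-f(x^*;S_k)$ and the Lipschitz transfer $f(y^k;S_k)\ge f(x^k;S_k)-\mu_k L_{f,S_k}^2$, the two $\mu_k^2\norm{g}^2$ contributions cancel and leave
\[\norm{x^{k+1}-x^*}^2\le\norm{x^k-x^*}^2-2\mu_k[f(x^k;S_k)-f(x^*;S_k)]+\mu_k^2 L_{f,S_k}^2-\text{dist}_{X_{S_k}}^2(y^k).\]
This exact cancellation is what produces the \emph{single} $L_{f,S}^2$ coefficient appearing in $\mathcal{R}_{\mu}$ rather than a doubled one.

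Next I would take the conditional expectation with respect to $S_k$ given $\mathcal{F}_{k-1}$, using $\mathbb{E}[f(x^k;S_k)\mid\mathcal{F}_{k-1}]=F(x^k)$ and $\mathbb{E}[f(x^*;S_k)\mid\mathcal{F}_{k-1}]=F^*$, then the full expectation, and telescope over $i=0,\dots,k-1$. Dropping $\mathbb{E}[\norm{x^k-x^*}^2]\ge0$ gives the master inequality $2\sum_{i=0}^{k-1}\mu_i(\mathbb{E}[F(x^i)]-F^*)+\sum_{i=0}^{k-1}\mathbb{E}[\text{dist}_{X_{S_i}}^2(y^i)]\le\norm{x^0-x^*}^2+\mathbb{E}[L_{f,S}^2]\hat{\mu}_{2,k}=\frac{\mathcal{R}_{\mu}}{\mu_0\kappa}$. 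Discarding the nonnegative residual sum and invoking Jensen for the convex $F$ along $\hat{x}^k$ immediately yields the upper estimate $\mathbb{E}[F(\hat{x}^k)]-F^*\le\frac{\mathcal{R}_{\mu}}{2\mu_0\kappa\hat{\mu}_{1,k}}$.

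For the feasibility estimate I would read Lemma~\ref{lemma_convexcase} as a quadratic inequality in $d:=\sqrt{\mathbb{E}[\text{dist}_X^2(\hat{x}^k)]}$, namely $\frac{1}{\kappa}d^2-\frac{\hat{\mu}_{2,k}}{\hat{\mu}_{1,k}}\sqrt{\mathbb{E}[L_{f,S}^2]}\,d\le\mathbb{E}[\text{dist}_{X_{S_k}}^2(\hat{y}^k)]$. It remains to upper bound the right-hand side. Writing $y^i=x^{i+1}+(y^i-x^{i+1})$ with $\norm{y^i-x^{i+1}}=\text{dist}_{X_{S_i}}(y^i)$ and using convexity of $\text{dist}_{X_{S_k}}$, I would split $\text{dist}_{X_{S_k}}(\hat{y}^k)$ into a term governed by the matching residual average (controlled by the master inequality, hence $O(\mathcal{R}_{\mu}/\hat{\mu}_{1,k})$) and a step-ratio remainder bounded through $\norm{\hat{y}^k-\hat{x}^k}\le\frac{1}{\hat{\mu}_{1,k}}\sum_i\mu_i^2\norm{\nabla f_{\mu_i}(x^i;S_i)}$ and Lemma~\ref{lemma_gradmiu_grad}; this yields a bound of the form $\mathbb{E}[\text{dist}_{X_{S_k}}^2(\hat{y}^k)]\le c\,\mathbb{E}[L_{f,S}^2]\big(\tfrac{\hat{\mu}_{2,k}}{\hat{\mu}_{1,k}}+2\mu_0\big)^2+\tfrac{\mathcal{R}_{\mu}}{\kappa\hat{\mu}_{1,k}}$. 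Solving the quadratic and relaxing through $(\alpha+\beta)^2\le2\alpha^2+2\beta^2$ then gives the stated $\mathbb{E}[\text{dist}_X^2(\hat{x}^k)]\le2\kappa^2\mathbb{E}[L_{f,S}^2]\big(\tfrac{\hat{\mu}_{2,k}}{\hat{\mu}_{1,k}}+2\mu_0\big)^2+\tfrac{2\mathcal{R}_{\mu}}{\hat{\mu}_{1,k}}$.

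The suboptimality lower bound then follows \emph{from} the feasibility bound: $F$ is Lipschitz with constant $\mathbb{E}[L_{f,S}]\le\sqrt{\mathbb{E}[L_{f,S}^2]}$, so projecting $\hat{x}^k$ onto $X$ gives $F(\hat{x}^k)\ge F^*-\sqrt{\mathbb{E}[L_{f,S}^2]}\,\text{dist}_X(\hat{x}^k)$; taking expectations, applying Jensen to the concave square root, and substituting the root form of the quadratic (before the $2\alpha^2+2\beta^2$ relaxation) reproduces the two negative terms in the claimed lower estimate. The main obstacle is the upper bound on $\mathbb{E}[\text{dist}_{X_{S_k}}^2(\hat{y}^k)]$ used in the third step: the algorithm only drives down the \emph{matching-index} residuals $\text{dist}_{X_{S_i}}^2(y^i)$, whereas the left-hand side of Lemma~\ref{lemma_convexcase} averages the infeasibility of the running average $\hat{y}^k$ over a \emph{fresh, independent} sample $S_k$. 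Reconciling these two, so that the fresh-sample infeasibility of the averaged iterate is certified small rather than merely bounded by a constant, is precisely where the linear regularity Assumption~\ref{assump_feasset}(ii) and the convex-combination structure must be used carefully, and I expect this to be the most delicate part of the argument.
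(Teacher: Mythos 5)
Your per-iteration recursion, the telescoping step, the Jensen argument for the upper suboptimality bound, and the way you extract the lower bound from the feasibility estimate all match the paper's argument (the paper derives the same recursion from the strong convexity of the prox subproblem together with the strict nonexpansiveness of the projection, rather than from the prox-optimality subgradient at $y^k$, but the resulting inequality is identical, including the single $\mu_k^2 L_{f,S_k}^2$ term). The genuine gap is exactly where you flag it: the upper bound on $\mathbb{E}[\text{dist}_{X_{S_k}}^2(\hat{y}^k)]$. The decomposition you sketch --- writing $y^i=x^{i+1}+(y^i-x^{i+1})$ and splitting $\text{dist}_{X_{S_k}}(\hat y^k)$ into a matching-residual average plus a step-ratio remainder --- is circular: the base-point contribution is $\text{dist}_{X_{S_k}}$ evaluated at a convex combination of the iterates $x^{i+1}$, which is of the same order as the quantity $\text{dist}_X(\hat x^k)$ you are trying to control, and the only a priori comparison available, $\text{dist}_{X_{S_k}}\le\text{dist}_X$, feeds $d^2$ back into the right-hand side of your quadratic inequality with coefficient $1\ge 1/\kappa$, so nothing is gained. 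Moreover, the constant-form bound you posit for $\mathbb{E}[\text{dist}_{X_{S_k}}^2(\hat{y}^k)]$ is not what the argument actually yields.

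The missing idea is that no separate geometric bound on $\mathbb{E}[\text{dist}_{X_{S_k}}^2(\hat y^k)]$ is needed: instead of discarding the nonnegative residual sum from the master inequality, keep it, write $\text{dist}_{X_{S_i}}^2(y^i)=2\mu_i\,\mathbb{I}_{\mu_i,S_i}(y^i)\ge 2\mu_i\,\mathbb{I}_{\mu_0,S_i}(y^i)$ using $\mu_i\le\mu_0$, and apply Jensen to the convex function $\mathbb{I}_{\mu_0,S}$ to obtain
$\mathbb{E}[\mathbb{I}_{\mu_0,S}(\hat y^k)]\le \frac{r_0^2+\mathbb{E}[L_{f,S}^2]\hat{\mu}_{2,k}}{2\hat{\mu}_{1,k}}-\left(\mathbb{E}[F(\hat x^k)]-F^*\right)$.
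The price is the suboptimality gap on the right, which is then lower-bounded by $-\sqrt{\mathbb{E}[L_{f,S}^2]\,\mathbb{E}[\text{dist}_X^2(\hat x^k)]}$ via the optimality condition $\langle\nabla F(x^*),z-x^*\rangle\ge 0$ on $X$ and Cauchy--Schwarz. That extra $\sqrt{\mathbb{E}[\text{dist}_X^2(\hat x^k)]}$ term is precisely what shifts the linear coefficient of the quadratic inequality from $\hat{\mu}_{2,k}/\hat{\mu}_{1,k}$ to $\hat{\mu}_{2,k}/\hat{\mu}_{1,k}+2\mu_0$ and produces the stated constants; without this move your plan does not close. (As an aside, the reconciliation between matching-index residuals and the fresh-sample evaluation that worries you is handled in the paper by this single Jensen step, which silently decouples $S_i$ from $y^i$; your instinct that this is the delicate point of the whole proof is sound.)
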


\begin{proof} Since the function  $z \to f(z;S)+ \frac{1}{2\mu}\|z - x\|^2$ is strongly convex, we have:
\begin{align}
 f(z;S) + \frac{1}{2\mu}\norm{z-x}^2
& \ge  f(z_{\mu}(x;S);S)  + \frac{1}{2\mu}\norm{z_{\mu}(x;S)-x}^2 + \frac{1}{2\mu}\norm{z_{\mu}(x;S) - z}^2 \nonumber \\
& = f_{\mu}(x;S) + \frac{1}{2\mu}\norm{z_{\mu}(x;S) - z}^2   \quad \forall z \in \rset^n. \label{th_convexcase_scf}
\end{align}
By taking $x = x^k, S=S_k, z = x^*, \mu = \mu_k$ in
\eqref{th_convexcase_scf} and using  the strictly nonexpansive property
of the projection operator, see e.g. \cite{Ned:11}:
\begin{equation}\label{strict_nonexpansiveness}
 \norm{x - [x]_{X_{S_k}}}^2 \le \norm{x - z}^2 - \norm{z - [x]_{X_{S_k}}}^2 \qquad \forall z \in X_{S_k}, \; x \in \rset^n,
\end{equation}
then these lead to:
\begin{align}\label{th_convexcase_scf2}
f(x^*;S_k) + & \frac{1}{2\mu_k}\norm{x^k-x^*}^2
\ge  f_{\mu_k}(x^k;S_k) + \frac{1}{2\mu_k} \norm{y^k - x^*}^2 \nonumber \\
& \ge  f_{\mu_k}(x^k;S_k) + \frac{1}{2\mu_k} \norm{[y^k]_{X_{S_k}} -x^*}^2  + \frac{1}{2\mu_k}\norm{y^k - [y^k]_{X_{S_k}}}^2 \nonumber \\
& =  f_{\mu_k}(x^k;S_k) + \frac{1}{2\mu_k} \norm{x^{k+1} - x^*}^2 + \frac{1}{2\mu_k}\norm{y^k - x^{k+1}}^2.
\end{align}
By denoting $\mathbb{I}_{\mu,S}(x) = \frac{1}{2\mu}\norm{x-[x]_{X_S}}^2$, from \eqref{th_convexcase_scf2}, it can be easily seen that:
\begin{align}
& \mu_k(f(x^k;S_k)- f(x^*;S_k))+ \mathbb{I}_{1,S_k}(y^k) - \frac{\mu_k^2}{2}L_{f,S_k}^2 \nonumber\\
&\le \mu_k(f(x^k;S_k)- f(x^*;S_k)) + \mathbb{I}_{1,S_k}(y^k) - \frac{\mu_k^2}{2}\norm{\nabla f(x^k;S_k)}^2 \nonumber\\
& = \mu_k(f(x^k;S_k)- f(x^*;S_k))+ \mathbb{I}_{1,S_k}(y^k)  + \min_{z \in \rset^n} \left[ \mu_k \langle \nabla f(x^k;S_k), z - x^k\rangle + \frac{1}{2}\norm{z-x^k}^2\right] \nonumber\\
&\le \mu_k(f(x^k;S_k)- f(x^*;S_k))+ \mathbb{I}_{1,S_k}(y^k)  + \mu_k \langle \nabla f(x^k;S_k), y^k-x^k\rangle + \frac{1}{2}\norm{y^k-x^k}^2 \nonumber\\
& = \mu_k(f(x^k;S_k) + \langle \nabla f(x^k;S_k), y^k-x^k\rangle + \frac{1}{2\mu_k}\norm{y^k-x^k}^2 - f(x^*;S_k)) + \mathbb{I}_{1,S_k}(y^k) \nonumber\\
& \overset{\text{conv. f}}{\le} \mu_k  (f_{\mu_k}(x^k;S_k)  - f(x^*;S_k)) + \mathbb{I}_{1,S_k}(y^k)  \nonumber \\
& \overset{\eqref{th_convexcase_scf2}}{\le} \frac{1}{2}\norm{x^{k} - x^*}^2 - \frac{1}{2}\norm{x^{k+1} - x^*}^2. \nonumber
\end{align}
Taking now the conditional expectation in $S_k$ w.r.t. the history ${\mathcal F}_{k-1}=\{S_0, \cdots, S_{k-1}\}$  in the last inequality we have:
\begin{align*}
\mu_k(F(x^k) - F(x^*)) + \mathbb{E}[\mathbb{I}_{1,S_k}(y^k)|{\mathcal F}_{k-1}] & - \frac{\mu_k^2}{2}\mathbb{E}[L_{f,S_k}^2]
 \le \frac{1}{2}\norm{x^{k} - x^*}^2 - \frac{1}{2}\mathbb{E}[\norm{x^{k+1} - x^*}^2|{\mathcal F}_{k-1}].
\end{align*}
Taking further the  expectation over ${\mathcal F}_{k-1}$ and summing  over $i=0, \cdots, k-1$, results in:
\begin{align}
\frac{\norm{x^{0} - x^*}^2}{2\sum\limits_{i=0}^{k-1} \mu_i}
&\ge \frac{1}{\sum\limits_{i=0}^{k-1} \mu_i}\sum\limits_{i=0}^{k-1} \mathbb{E}[\mu_i(F(x^i) - F(x^*))] + \mathbb{E}[\mathbb{I}_{1,S}(y^i)] - \frac{\mu_i^2}{2}\mathbb{E}[L_{f,S}^2] \nonumber\\
& =  \frac{1}{\sum\limits_{i=0}^{k-1} \mu_i}\sum\limits_{i=0}^{k-1} \mathbb{E}[\mu_i(F(x^i)- F(x^*))] + \mu_i\mathbb{E}[\mathbb{I}_{\mu_i,S}(y^i)] - \frac{\mu_i^2}{2}\mathbb{E}[L_{f,S}^2] \nonumber\\
& \ge  \frac{1}{\sum\limits_{i=0}^{k-1} \mu_i}\sum\limits_{i=0}^{k-1} \mathbb{E}[\mu_i(F(x^i)- F(x^*))] + \mu_i\mathbb{E}[\mathbb{I}_{\mu_0,S}(y^i)] - \frac{\mu_i^2}{2}\mathbb{E}[L_{f,S}^2] \nonumber\\
& \overset{\text{Jensen}}{\ge} \mathbb{E}[F(\hat{x}^k)- F(x^*)] + \mathbb{E}[\mathbb{I}_{\mu_0,S}(\hat{y}^k)] - \frac{\mathbb{E}[L_{f,S}^2] \hat{\mu}_{2,k} }{2\hat{\mu}_{1,k}}
\label{th_convexcase_prelimbound}
\end{align}
The relation \eqref{th_convexcase_prelimbound} implies the following upper bound on the suboptimality gap:
\begin{align}
\label{th_convexcase_suboptup}
\mathbb{E}[F(\hat{x}^k)- F(x^*)] \le \frac{\norm{x^{0} - x^*}^2 + \mathbb{E}[L_{f,S}^2]\hat{\mu}_{2,k} }{2\hat{\mu}_{1,k}}.
\end{align}
On the other hand, recalling $\nabla F(x^*) = \mathbb{E}[\nabla f(x^*;S)]$, we use the following fact:
\begin{align}
\mathbb{E}[F(\hat{x}^k)] - F(x^*)
&\ge  \mathbb{E}[\langle \nabla F(x^*), \hat{x}^k - x^* \rangle] \nonumber\\
&\ge  \mathbb{E}[\langle \nabla F(x^*), [\hat{x}^k]_{X} - x^* \rangle] +  \mathbb{E}[\langle \nabla F(x^*), \hat{x}^k - [\hat{x}^k]_{X} \rangle] \nonumber\\
& \ge  - \mathbb{E}[L_{f,S}] \mathbb{E}[\text{dist}_X(\hat{x}^k)] \nonumber \\
&\overset{\text{Jensen}}{\ge}  -  \sqrt{\mathbb{E}[L_{f,S}^2] \ \mathbb{E}[\text{dist}_X^2(\hat{x}^k)]} \qquad \forall k \ge 0, \label{th_convexcase_rel2}
\end{align}
which is derived from the optimality conditions $\langle \nabla F(x^*), z-x^*\rangle \ge 0$ for all $z \in X$, the Cauchy-Schwarz and Jensen inequalities.
By denoting $r_0 = \norm{x^0-x^*}$ and combining \eqref{th_convexcase_prelimbound} with Lemma \ref{lemma_convexcase} and the last inequality \eqref{th_convexcase_rel2}, we obtain:
\begin{align*}
\mathbb{E}[\text{dist}_{X}^2(\hat{x}^k)]   - \kappa \sqrt{\mathbb{E}[L_{f,S}^2]}\left(\frac{\hat{\mu}_{2,k}}{\hat{\mu}_{1,k}}+2\mu_0\right) & \sqrt{\mathbb{E}[\text{dist}_X^2(\hat{x}^k)]}  \le \frac{\mu_0 \kappa r_0^2 + \mu_0 \kappa \mathbb{E}[L_{f,S}^2]\hat{\mu}_{2,k}}{\hat{\mu}_{1,k}}.
\end{align*}
This relation clearly implies an upper bound on the feasibility residual:
\begin{align}
\label{th_convexcase_feas_bound}
\sqrt{\mathbb{E}[\text{dist}_{X}^2(\hat{x}^k)]}
& \le\! \kappa \sqrt{\mathbb{E}[L_{f,S}^2]}\left(\frac{\hat{\mu}_{2,k}}{\hat{\mu}_{1,k}}+2\mu_0\right) \! + \! \sqrt{\frac{\mu_0 \kappa r_0^2 + \mu_0 \kappa \mathbb{E}[L_{f,S}^2]\hat{\mu}_{2,k}} { \hat{\mu}_{1,k}}}.
\end{align}
\noindent Also, combining \eqref{th_convexcase_rel2} and \eqref{th_convexcase_feas_bound} we obtain the lower bound on the suboptimality gap:
\begin{equation}\label{th_convexcase_suboptlow}
\mathbb{E}[F(\hat{x}^k)] - F^* \!\ge\! - \kappa \mathbb{E}[L_{f,S}^2] \left(\frac{\hat{\mu}_{2,k}}{\hat{\mu}_{1,k}}\!+\!2\mu_0\right) \! - \!
\sqrt{\mathbb{E}[L_{f,S}^2]} \sqrt{\frac{\mu_0 \kappa r_0^2 \!+\! \mu_0 \kappa \mathbb{E}[L_{f,S}^2]\hat{\mu}_{2,k}} { \hat{\mu}_{1,k}}}.
\end{equation}
From the upper and lower  suboptimality bounds \eqref{th_convexcase_suboptup}-\eqref{th_convexcase_suboptlow} and feasibility bound \eqref{th_convexcase_feas_bound}, we deduce our convergence rate results.
\end{proof}

\noindent Note that the suboptimality bound \eqref{th_convexcase_suboptup} obtained for the SPP algorithm coincides with the one given for the standard subgradient method \cite{Nes:04}.
Further, we provide the convergence estimates for the algorithm SPP with constant stepsize for a desired accuracy $\epsilon$. For simplicity, assume that $r_0=\norm{x^0-x^*}\ge 1$ and $\mathbb{E}[L_{f,S}^2] \ge 2$.

\begin{corrolary}\label{corr_convexcase}
Under the assumptions of Theorem \ref{th_convexcase}, let $\{x^k\}_{k \ge 0}$ be the sequence generated by algorithm SPP with constant stepsize  $\mu>0$.
Also let $\epsilon > 0$ be the desired accuracy, $K$ be an integer satisfying:
$$ K \ge \frac{\mathbb{E}[L_{f,S}^2] \norm{x^0-x^*}^2}{\epsilon^2} \max\left\{1, (3\kappa + \sqrt{2\kappa})^2\right\},$$
and the stepsize  be chosen as:
\begin{equation*}
 \mu = \frac{\epsilon}{\mathbb{E}[L_{f,S}^2] (3\kappa + \sqrt{2\kappa})}.
\end{equation*}
Then, after $K$ iterations, the average point $\hat{x}^K = \frac{1}{K} \sum\limits_{i=0}^{K-1} x^i$ satisfies:
$$ \left| \mathbb{E}[F(\hat{x}^K)] - F^* \right| \le \epsilon \quad \text{and} \quad \sqrt{\mathbb{E}[\text{dist}_X^2(\hat{x}^K)]} \le \epsilon.$$
\end{corrolary}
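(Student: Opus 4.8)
The plan is to treat Corollary~\ref{corr_convexcase} as a direct specialization of Theorem~\ref{th_convexcase} to the constant stepsize regime, so that the whole argument reduces to substitution and careful bookkeeping. First I would set $\mu_k \equiv \mu$ for all $k$ and take $k = K$, which collapses the weighting quantities to $\hat{\mu}_{1,K} = K\mu$ and $\hat{\mu}_{2,K} = K\mu^2$; in particular the recurring ratio simplifies to $\frac{\hat{\mu}_{2,K}}{\hat{\mu}_{1,K}} = \mu = \mu_0$, whence $\frac{\hat{\mu}_{2,K}}{\hat{\mu}_{1,K}} + 2\mu_0 = 3\mu$, and the average reduces to $\hat{x}^K = \frac{1}{K}\sum_{i=0}^{K-1}x^i$ as claimed. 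Substituting these into the three estimates of \eqref{thconvexcase_subopt} turns each bound into an explicit function of $\mu, K, \epsilon, \kappa, \mathbb{E}[L_{f,S}^2]$ and $r_0$.

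After substitution the upper suboptimality bound \eqref{th_convexcase_suboptup} becomes $\frac{r_0^2}{2K\mu} + \frac{\mathbb{E}[L_{f,S}^2]\mu}{2}$, a sum of an optimization term decaying in $K\mu$ and a variance term linear in $\mu$. The feasibility bound \eqref{th_convexcase_feas_bound} and the lower suboptimality bound \eqref{th_convexcase_suboptlow} both reduce to the common expression $3\kappa\sqrt{\mathbb{E}[L_{f,S}^2]}\,\mu + \sqrt{\frac{\kappa r_0^2}{K} + \kappa\mathbb{E}[L_{f,S}^2]\mu^2}$ (with an extra factor $\sqrt{\mathbb{E}[L_{f,S}^2]}$ in front of the root for the suboptimality version). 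I would then plug in the prescribed $\mu = \frac{\epsilon}{\mathbb{E}[L_{f,S}^2](3\kappa+\sqrt{2\kappa})}$ together with the lower bound on $K$.

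The heart of the verification is that the constant $3\kappa + \sqrt{2\kappa}$ is engineered precisely to balance these terms. With the chosen $\mu$, the linear term $3\kappa\sqrt{\mathbb{E}[L_{f,S}^2]}\,\mu$ equals $\frac{3\kappa\epsilon}{\sqrt{\mathbb{E}[L_{f,S}^2]}(3\kappa+\sqrt{2\kappa})}$, while the lower bound on $K$ forces both summands inside the square root to be at most $\frac{\kappa\epsilon^2}{\mathbb{E}[L_{f,S}^2](3\kappa+\sqrt{2\kappa})^2}$, so the root is at most $\frac{\sqrt{2\kappa}\epsilon}{\sqrt{\mathbb{E}[L_{f,S}^2]}(3\kappa+\sqrt{2\kappa})}$. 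Adding the two contributions, the numerator $3\kappa + \sqrt{2\kappa}$ cancels the corresponding factor in the denominator and leaves exactly $\frac{\epsilon}{\sqrt{\mathbb{E}[L_{f,S}^2]}}$, which is $\le\epsilon$ precisely because $\mathbb{E}[L_{f,S}^2]\ge 2\ge 1$; this gives the feasibility claim. The same cancellation yields the lower suboptimality bound $\ge -\epsilon$, and a cheaper estimate of $\frac{r_0^2}{2K\mu} + \frac{\mathbb{E}[L_{f,S}^2]\mu}{2}$ shows the upper bound is $\le \frac{\epsilon}{3\kappa+\sqrt{2\kappa}}\le\epsilon$; together these give $\left|\mathbb{E}[F(\hat{x}^K)]-F^*\right|\le\epsilon$.

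I expect the only delicate step to be controlling the square-root term $\sqrt{\frac{\kappa r_0^2}{K} + \kappa\mathbb{E}[L_{f,S}^2]\mu^2}$: the first summand is tamed by the $K$ lower bound and the second by the choice of $\mu$, and the factor $\sqrt{2}$ in the constant is exactly what is needed to absorb the inequality $\sqrt{a+b}\le\sqrt{2}\max\{\sqrt{a},\sqrt{b}\}$ once both summands are bounded by the same quantity. The simplifying assumptions $r_0\ge 1$ and $\mathbb{E}[L_{f,S}^2]\ge 2$, together with the $\max\{1,(3\kappa+\sqrt{2\kappa})^2\}$ in the requirement on $K$, serve only to guarantee that the relevant branch of the maximum is active and that the final comparisons with $\epsilon$ hold without trailing constants; everything else is routine substitution into the already-proved theorem.
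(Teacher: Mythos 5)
Your proposal is correct and follows essentially the same route as the paper: specialize the bounds of Theorem~\ref{th_convexcase} to constant stepsize at $k=K$ and check that the prescribed $\mu$ and $K$ drive all three estimates below $\epsilon$. If anything, your direct verification with the stated $\mu=\frac{\epsilon}{\mathbb{E}[L_{f,S}^2](3\kappa+\sqrt{2\kappa})}$ is cleaner than the paper's detour through the $K$-dependent ``optimal'' stepsize $\mu=\sqrt{r_0^2/(K\,\mathbb{E}[L_{f,S}^2])}$, which coincides with the stated one only at the threshold value of $K$; the only nit is that in the lower suboptimality bound the factor $\sqrt{\mathbb{E}[L_{f,S}^2]}$ multiplies the entire feasibility expression (so the linear term carries $\mathbb{E}[L_{f,S}^2]$, not $\sqrt{\mathbb{E}[L_{f,S}^2]}$), but your final cancellation handles exactly this case and the conclusion stands.
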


\begin{proof}
We consider $k = K$ in Theorem \ref{th_convexcase} and, by taking into account that $\mu_k = \mu$ for all $k \ge 0$,
we aim to obtain the lowest value of the right hand side of \eqref{thconvexcase_subopt}
by minimizing over $\mu > 0$. Thus, by recalling that $r_0 = \norm{x^0-x^*}$, we obtain for the optimal smoothing parameter:
$$ \mu = \sqrt{\frac{r_0^2}{K \mathbb{E}[L_{f,S}^2]}}$$
the optimal rate
\begin{equation}\label{subopt_upperbound}
 \mathbb{E}[F(\hat{x}^K)] - F^*  \le \sqrt{\frac{\mathbb{E}[L_{f,S}^2] r_0^2}{K}}.
\end{equation}
Also using the optimal parameter $\tilde{\mu}$ into the other relations of Theorem \ref{th_convexcase} results:
\begin{equation}\label{feas_bound}
\mathbb{E}[\text{dist}_{X}^2(\hat{x}^K)]
\le \frac{r_0^2}{K}\left(18 \kappa^2 + 4\kappa \right)
\end{equation}
and
\begin{equation}\label{subopt_lowerbound}
 \mathbb{E}[F(\hat{x}^K)] -F^* \ge - (3\kappa + \sqrt{2\kappa})\sqrt{\frac{\mathbb{E}[L_{f,S}^2]r_0^2 }{K}}.
\end{equation}
From the upper and lower  suboptimality bounds \eqref{subopt_upperbound}-\eqref{subopt_lowerbound} and feasibility bound \eqref{feas_bound}, we
deduce the following bound:
$$ K \ge \frac{ \mathbb{E}[L_{f,S}^2]r_0^2}{\epsilon^2} \max\left\{1, (3\kappa + \sqrt{2\kappa})^2\right\}$$
which confirms our result.
\end{proof}

\noindent In conclusion,   Corollary \ref{corr_convexcase}
states that for a desired accuracy $\epsilon$, if we choose a
constant stepsize $\mu = \mathcal{O} (\epsilon)$ and perform a
number of SPP iterations  $\mathcal{O}\left(
\frac{1}{\epsilon^2}\right)$ we obtain an $\epsilon$-optimal
solution for our original stochastic constrained convex problem
\eqref{convexfeas_random}. Note that for convex problems with
objective function having bounded subgradients the previous
convergence estimates derived for the SPP algorithm are similar to
those corresponding to the classical deterministic proximal point method
\cite{Gul:91} and  subgradient method  \cite{Nes:04}.

%%%%%%%%%%%%%%%%%%%%%%%%%%%%%%%%%%%%%%%%%%%%%%%%%%%%%%%%%%%%%%%%%%%%%%%%%%%%%%%%%%%%%%%%%%%%5
%%%%%%%%%%%%%%%%%%%%%%%%%%%%%%%%%%%%%%%%%%%%%%%%%%%%%%%%%%%%%%%%%%%%%%%%%%%%%%%%%%%%%%%%%%%%%%%%%%%%%%%%%%%%%%%%%%%%%%%%%%%%%%%%%%

\section{Nonasymptotic complexity analysis of SPP: strongly convex objective function}
In this section we analyze the convergence behavior  of the  SPP
scheme under smoothness and strong convexity assumptions on the
objective function of constrained  problem
\eqref{convexfeas_random}. Therefore, in this  section the
Assumption \ref{assump_fsmooth} is replaced by the following
assumptions:
\begin{assumption}\label{assump_strongconvex}
Each function $f(\cdot;S)$ is differentiable and $\sigma_{f,S}$-restricted strongly convex, that is there exists strong convexity constant $\sigma_{f,S} \ge 0$ such that:
\begin{equation*}
     f(x;S) \ge f(y;S)  + \langle \nabla f(y;S), x-y\rangle + \frac{\sigma_{f,S}}{2}\norm{x-y}^2 \qquad \forall x,y \in \rset^n.
\end{equation*}
Moreover, the strong convexity constants $\sigma_{f,S}$ satisfy $\sigma_F = \mathbb{E}[\sigma_{f,S}]  > 0$.
\end{assumption}

\noindent Notice that if for some function $f(\cdot;S)$ the
corresponding constant $\sigma_{f,S}$ is equal to $0$, then
$f(\cdot;S)$ is only convex function. However, relation
$\mathbb{E}[\sigma_{f,S}] = \sigma_F >0 $ implies that the objective
function $F$ of problem \eqref{convexfeas_random} is strongly convex
with constant $\sigma_F > 0$. In the sequel we will analyze the SPP
scheme  also under the following smoothness assumption:

\begin{assumption}\label{assump_lipgrad}
Each function $f(\cdot;S)$ has Lipschitz gradient, that is  there exists Lipschitz constant $L_{f,S} > 0$ such that:
\begin{equation*}
\norm{\nabla f(x;S) - \nabla f(y;S)} \le L_{f,S} \norm{x-y} \qquad \forall x,y \in \rset^n.
\end{equation*}
\end{assumption}

\noindent Note that Assumptions \ref{assump_strongconvex} and
\ref{assump_lipgrad} are standard for the convergence analysis of
SPP like schemes, see e.g. \cite{MouBac:11,RyuBoy:16}. We first
present an auxiliary result on the behavior of the proximal mapping
$z_{\mu}(\cdot;S)$.

\begin{lemma} \label{lemma_non-as}
Let $f(\cdot;S)$ satisfy Assumption \ref{assump_strongconvex}. Further, for any $S \in \Omega$ and $\mu > 0$, we define $\theta_S(\mu) =  \frac{1}{1 + \mu\sigma_{f,S}}$. Then, the following contraction inequality holds for the prox operator:
\begin{equation*}
\left\|z_{\mu}(x;S) - z_{\mu}(y;S) \right\| \le \theta_S(\mu) \norm{x-y} \qquad \forall x,y \in \rset^n.
\end{equation*}
\end{lemma}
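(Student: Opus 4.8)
The plan is to exploit the first-order optimality conditions of the two proximal subproblems together with the strong monotonicity of the gradient of $f(\cdot;S)$ induced by Assumption \ref{assump_strongconvex}; this is the standard argument showing that the proximal operator of a $\sigma$-strongly convex function is a contraction with factor $\frac{1}{1+\mu\sigma}$. Write $u = z_{\mu}(x;S)$ and $v = z_{\mu}(y;S)$. Since under Assumption \ref{assump_strongconvex} the function $f(\cdot;S)$ is differentiable, the minimizers $u$ and $v$ are characterized by the stationarity conditions
\[
\frac{1}{\mu}(x - u) = \nabla f(u;S), \qquad \frac{1}{\mu}(y - v) = \nabla f(v;S),
\]
which rearrange into the single identity $x - y = (u - v) + \mu\left(\nabla f(u;S) - \nabla f(v;S)\right)$.

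Next I would take the inner product of this identity with $u - v$, obtaining
\[
\langle x - y, u - v\rangle = \norm{u - v}^2 + \mu \langle \nabla f(u;S) - \nabla f(v;S), u - v\rangle.
\]
The essential ingredient is then the strong monotonicity estimate $\langle \nabla f(u;S) - \nabla f(v;S), u - v\rangle \ge \sigma_{f,S}\norm{u - v}^2$, which follows by writing the strong convexity inequality of Assumption \ref{assump_strongconvex} twice (once with the roles of $u$ and $v$ interchanged) and adding the two resulting inequalities so that the function values cancel. Substituting this bound yields
\[
\langle x - y, u - v\rangle \ge (1 + \mu\sigma_{f,S})\norm{u - v}^2.
\]

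Finally, I would apply the Cauchy-Schwarz inequality to the left-hand side, $\langle x - y, u - v\rangle \le \norm{x - y}\,\norm{u - v}$, and divide through by $\norm{u - v}$, the claim being trivial when $u = v$. This gives $(1 + \mu\sigma_{f,S})\norm{u - v} \le \norm{x - y}$, which is exactly the asserted contraction with factor $\theta_S(\mu) = \frac{1}{1 + \mu\sigma_{f,S}}$. The only step that requires any attention is deriving the strong monotonicity of the gradient from the restricted strong convexity inequality; everything else is a direct manipulation of the optimality conditions. I note that differentiability of $f(\cdot;S)$ under Assumption \ref{assump_strongconvex} lets me work with gradients rather than with subgradient selections, which keeps the stationarity conditions clean and avoids the need to track a particular element of the subdifferential as was done in Lemma \ref{lemma_gradmiu_grad}.
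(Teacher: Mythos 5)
Your proof is correct, but it follows a genuinely different route from the paper's. You argue directly on the proximal subproblems: the stationarity conditions give $x-y=(u-v)+\mu\left(\nabla f(u;S)-\nabla f(v;S)\right)$, strong monotonicity of the gradient (obtained by summing the two strong-convexity inequalities) yields $\langle x-y,u-v\rangle\ge(1+\mu\sigma_{f,S})\norm{u-v}^2$, and Cauchy--Schwarz finishes; each step is valid, including the degenerate cases $u=v$ and $\sigma_{f,S}=0$, where the bound reduces to nonexpansiveness. The paper instead works at the level of the Moreau envelope: it invokes the known facts that $f_{\mu}(\cdot;S)$ is strongly convex with modulus $\hat{\sigma}_{f,S}=\sigma_{f,S}/(1+\mu\sigma_{f,S})$ and has $\frac{1}{\mu}$-Lipschitz gradient, writes $z_{\mu}(x;S)=x-\mu\nabla f_{\mu}(x;S)$, and combines the coercivity (interpolation) inequality for smooth strongly convex functions with the lower bound $\norm{\nabla f_{\mu}(x;S)-\nabla f_{\mu}(y;S)}\ge\hat{\sigma}_{f,S}\norm{x-y}$ to land exactly on the factor $(1-\hat{\sigma}_{f,S}\mu)^2=\theta_S(\mu)^2$. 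Your argument is the classical resolvent-contraction proof and is shorter and more elementary, requiring only differentiability and strong monotonicity; the paper's version is heavier but stays entirely within the Moreau smoothing framework that organizes the rest of its analysis, reusing the envelope's modulus and Lipschitz constants that appear elsewhere in the convergence proofs. Both establish the identical contraction factor.
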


\begin{proof}
Let $\sigma_{f,S} \geq 0$ be the strong convexity constant of the
function $f(\cdot;S)$. Notice that we allow the convex case, that is
$\sigma_{f,S} = 0$ for some $S$. Then, it is known that the Moreau
approximation $f_{\mu}(\cdot;S)$ is also a  $\hat{\sigma}_{f,S}$-strongly
convex function  with strong convexity constant,  see e.g.
\cite{RocWet:98}:
\[ \hat{\sigma}_{f,S} = \frac{\sigma_{f,S}}{1 + \mu\sigma_{f,S}}. \]
Clearly, in the simple convex case, that is  $\sigma_{f,S} = 0$, we
also have $\hat{\sigma}_{f,S} = 0$. By denoting $\hat{L}_{f,S} =
\frac{1}{\mu}$ the Lipschitz constant of the gradient of
$f_{\mu}(\cdot;S)$, the following well-known relation holds  for the smooth
and (strongly) convex function $f_{\mu}(\cdot;S)$, see e.g. \cite{Nes:04}:
\begin{align}
\langle \nabla f_{\mu}(x;S) - \nabla f_{\mu}(y;S),x - y \rangle  & \ge \frac{1}{\hat{\sigma}_{f,S} + \hat{L}_{f,S}}\norm{\nabla f_{\mu}(x;S)
- \nabla f_{\mu}(y;S)}^2 \nonumber \\
& \qquad +  \frac{\hat{\sigma}_{f,S}\hat{L}_{f,S}}{\hat{L}_{f,S}  + \hat{\sigma}_{f,S}}\norm{x - y}^2 \quad \forall x,y \in \rset^n.
\label{coercivity}
\end{align}
By using Assumption \ref{assump_strongconvex}, then it can be also obtained that:
\begin{equation}\label{conseq1_coerciv}
   \norm{\nabla f_{\mu}(x;S) - \nabla f_{\mu}(y;S)} \ge \hat{\sigma}_{f,S} \norm{x-y}\qquad \forall x,y \in \rset^n.
\end{equation}
Using this relation, we further derive that:
\begin{align*}
& \norm{z_{\mu}(x;S)-z_{\mu}(y;S)}^2
 = \norm{x - y + \mu(\nabla f_{\mu}(y;S) - \nabla f_{\mu}(x;S) )}^2 \\
&  = \norm{x-y}^2 + 2\mu\langle \nabla f_{\mu}(y;S) - \nabla f_{\mu}(x;S), x-y\rangle + \mu^2\norm{\nabla f_{\mu}(x;S)-\nabla f_{\mu}(y;S)}^2 \\
& \overset{\eqref{coercivity}}{\le} \left( 1 - \frac{2\mu\hat{\sigma}_{f,S} \hat{L}_{f,S}}{\hat{L}_{f,S} + \hat{\sigma}_{f,S}}\right)\norm{x-y}^2 +
\mu\left(\mu - \frac{2}{\hat{L}_{f,S} + \hat{\sigma}_{f,S}}\right)\norm{\nabla f_{\mu}(x;S) - \nabla f_{\mu}(y;S)}^2 \\
& \overset{\eqref{conseq1_coerciv}}{\le} \left[ 1 + \hat{\sigma}_{f,S}^2\left(\mu^2 - \frac{2\mu}{\hat{\sigma}_{f,S} + \hat{L}_{f,S}} \right)
- \frac{2\mu\hat{\sigma}_{f,S} \hat{L}_{f,S}}{\hat{L}_{f,S} + \hat{\sigma}_{f,S}}   \right]\norm{x-y}^2 \\
& = \left(1 - \hat{\sigma}_{f,S}\mu\right)^2 \norm{x-y}^2 \qquad \forall x,y \in \rset^n,
\end{align*}
which implies our result.
\end{proof}

\noindent  Notice that if all the functions $f(\cdot;S)$ are
just convex, that is  they satisfy Assumption
\ref{assump_strongconvex} with $\sigma_{f,S} = 0$, then Lemma
\ref{lemma_non-as} highlights the nonexpansiveness property of the
proximal operator $z_{\mu}(\cdot;S)$. We will further keep using the
notation $\theta_S(\mu)$ for the contraction factor of the operator
$z_{\mu}(\cdot;S)$. Moreover, in all our proofs below, regarding the results in expectation, we use the standard
technique of tacking first expectation with respect to $S_k$ conditioned on  ${\mathcal F}_{k-1}$ and then take the  expectation over the entire history ${\mathcal F}_{k-1}$ (see the proof of Theorem \ref{th_convexcase}). For simplicity of the exposition and for saving space, we  omit these details below.

%%%%%%%%%%%%%%%%%%%%%%%%%%%%%%%%%%%%%%%%%%%%%%%%%%%%%%%%%%%%%%%%%%%%%%%%%%%5

\subsection{Linear convergence to noise dominated region for constant stepsize SPP}
\noindent Next we analyze the sequence  generated by the SPP scheme
with constant stepsize $\mu>0$ and provide a nonasymptotic bound on
the quadratic mean $\mathbb{E}[\norm{x^k-x^*}^2]$.

\begin{theorem}\label{th_conststep}
Under Assumption \ref{assump_strongconvex}, let the sequence
$\{x^k\}_{k \ge 0}$ be generated by the algorithm SPP with constant
stepsize $\mu > 0$. Further,  assume $\sigma_{f}^{\max} =
\sup\limits_{S \in \Omega} \sigma_ {f,S} < \infty$. Then,
$\mathbb{E}[\theta_S^2(\mu)] \le \mathbb{E}[\theta_S(\mu)] < 1$ and
the following linear convergence to some region around the optimal
point in  the quadratic mean holds:
\begin{align*}
\mathbb{E}[\norm{x^k  -x^*}^2] \le
2\left(\mathbb{E}\left[\theta_S^2(\mu) \right]\right)^{k}
\norm{x^0-x^*}^2 +  \frac{2\mu^2 \mathbb{E}[\norm{\nabla
f(x^*;S)}^2]}{\left(1 - \sqrt{\mathbb{E}[\theta_S^2(\mu)]}
\right)^2}.
\end{align*}
\end{theorem}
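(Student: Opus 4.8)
The plan is to establish a one-step contraction inequality for $\mathbb{E}[\|x^{k+1}-x^*\|^2]$ in terms of $\mathbb{E}[\|x^k-x^*\|^2]$, and then unroll this recursion geometrically. The key structural facts I would exploit are: (i) the fixed-point characterization of $x^*$, namely that $x^* \in X_S$ for every $S$ and $x^* = z_\mu(x^*;S)$ would be too strong, so instead I would work with $y^* = z_\mu(x^*;S_k)$ and use that $x^*$ is feasible for every constraint set so projection onto $X_{S_k}$ leaves it essentially in place; and (ii) the contraction estimate from Lemma~\ref{lemma_non-as}, which gives $\|z_\mu(x;S)-z_\mu(y;S)\| \le \theta_S(\mu)\|x-y\|$.

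Let me sketch the recursion. Since $x^{k+1} = [y^k]_{X_{S_k}} = [z_{\mu}(x^k;S_k)]_{X_{S_k}}$ and $x^* \in X_{S_k}$, nonexpansiveness of the projection gives $\|x^{k+1}-x^*\| \le \|z_\mu(x^k;S_k)-x^*\|$. The plan is then to split
\begin{equation*}
\|z_\mu(x^k;S_k)-x^*\| \le \|z_\mu(x^k;S_k)-z_\mu(x^*;S_k)\| + \|z_\mu(x^*;S_k)-x^*\|.
\end{equation*}
The first term is bounded by $\theta_{S_k}(\mu)\|x^k-x^*\|$ via Lemma~\ref{lemma_non-as}. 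For the second term I would use the optimality condition $\frac{1}{\mu}(x^*-z_\mu(x^*;S_k)) \in \partial f(z_\mu(x^*;S_k);S_k)$, together with smoothness (Assumption~\ref{assump_lipgrad}) and a short computation, to show $\|z_\mu(x^*;S_k)-x^*\| \le \mu\|\nabla f(x^*;S_k)\|$ (up to the $\theta$ factor). This is the residual term that produces the noise-dominated region.

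Combining these and taking conditional expectation, the main technical device is to pass from $\|\cdot\|$ to $\|\cdot\|^2$ without losing too much. Rather than squaring directly (which would create a cross term), the cleaner route is to apply the Minkowski inequality (Lemma~\ref{lemma_minkowski} with $p=2$) to the random variable $\|x^{k+1}-x^*\|$ viewed as a sum, obtaining
\begin{equation*}
\sqrt{\mathbb{E}[\|x^{k+1}-x^*\|^2]} \le \sqrt{\mathbb{E}[\theta_S^2(\mu)]}\,\sqrt{\mathbb{E}[\|x^k-x^*\|^2]} + \mu\sqrt{\mathbb{E}[\|\nabla f(x^*;S)\|^2]}.
\end{equation*}
This is a scalar recursion $a_{k+1} \le q\,a_k + b$ with $q = \sqrt{\mathbb{E}[\theta_S^2(\mu)]}<1$, whose solution is $a_k \le q^k a_0 + \frac{b}{1-q}$. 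Squaring this and using $(u+v)^2 \le 2u^2+2v^2$ yields exactly the stated bound, with the factor $2$ and the $(1-\sqrt{\mathbb{E}[\theta_S^2(\mu)]})^2$ denominator appearing naturally.

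The preliminary claim $\mathbb{E}[\theta_S^2(\mu)] \le \mathbb{E}[\theta_S(\mu)] < 1$ I would dispatch first: since $\theta_S(\mu) = \frac{1}{1+\mu\sigma_{f,S}} \in (0,1]$ we have $\theta_S^2 \le \theta_S$ pointwise, and strictness of the expectation bound follows because $\sigma_F = \mathbb{E}[\sigma_{f,S}]>0$ forces $\theta_S(\mu)<1$ on a set of positive measure (using $\sigma_f^{\max}<\infty$ to keep things integrable). The main obstacle I anticipate is the second paragraph's bound on $\|z_\mu(x^*;S_k)-x^*\|$: making the constant come out as exactly $\mu\|\nabla f(x^*;S_k)\|$ requires care, since $x^*$ is a minimizer of $F$ but not of each individual $f(\cdot;S)$, so $\nabla f(x^*;S_k)$ is genuinely nonzero and is precisely the source of the irreducible variance term. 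Establishing that inequality correctly — rather than the clean vanishing residual one gets in the unconstrained deterministic case — is the crux of the argument.
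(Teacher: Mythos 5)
Your proposal follows essentially the same route as the paper: projection nonexpansiveness (since $x^*\in X_{S_k}$), the triangle-inequality split around $z_\mu(x^*;S_k)$, the contraction from Lemma~\ref{lemma_non-as}, Minkowski to obtain the root-mean-square recursion $r_{k+1}\le \theta(\mu)r_k+\mu\eta$, geometric unrolling, and squaring via $(u+v)^2\le 2u^2+2v^2$. The one step you flag as the crux is actually immediate and needs neither Assumption~\ref{assump_lipgrad} (which is not among this theorem's hypotheses) nor any delicate computation: $\lVert z_\mu(x^*;S)-x^*\rVert=\mu\lVert\nabla f_\mu(x^*;S)\rVert\le\mu\lVert\nabla f(x^*;S)\rVert$ by the Moreau gradient identity together with Lemma~\ref{lemma_gradmiu_grad}.
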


\begin{proof}
First, it can be easily seen that for any $\mu>0$ and $S \in \Omega$
we have $\theta_S^2(\mu) \leq \theta_S(\mu) \leq 1$ and assuming
that $\sigma_{f}^{\max}  < \infty$ we obtain:
\begin{equation*}
0 \le \mathbb{E}[\theta_S^2(\mu)] \le \mathbb{E}[\theta_S(\mu)] =
\mathbb{E}\left[\frac{1}{1 + \mu\sigma_{f,S}}\right] \le 1 -
\mathbb{E}\left[\frac{\mu \sigma_{f,S}}{1 + \mu \sigma_{f,S}}\right]
\le 1 - \frac{\mu \sigma_F}{1 + \mu \sigma_{f}^{\max}} < 1.
\end{equation*}
Then, by applying Lemma \ref{lemma_non-as} with $S = S_k, x = x^k$ and $z = x^*$, results in:
\begin{equation*}
\left\|z_{\mu}(x^{k};S_k) - z_{\mu}(x^*;S_k) \right\| \le \theta_{S_k}(\mu)\norm{x^k - x^*},
\end{equation*}
which, by the triangle inequality, further implies:
\begin{equation*}
\left\|z_{\mu}(x^{k};S_k) - x^* \right\| \le  \theta_{S_k}(\mu) \norm{x^k -x^*}  + \norm{z_{\mu}(x^*;S_k) -x^*}.
\end{equation*}
By using the nonexpansiveness property of the projection operator we get that $\norm{x^{k+1}-x^*} \le \norm{y^k-x^*}$, then the last inequality leads to the reccurent relation:
\begin{equation}\label{root_descent}
\left\|x^{k+1} - x^* \right\| \le \left\|z_{\mu}(x^{k};S_k) - x^* \right\|  \le \theta_{S_k}(\mu) \norm{x^k -x^*}  + \norm{z_{\mu}(x^*;S_k) -x^*}.
\end{equation}
The relation \eqref{root_descent}, Minkowski inequality and Lemma
\ref{lemma_gradmiu_grad} lead to the following recurrence:
\begin{align*}
 \sqrt{\mathbb{E}[\left\|x^{k+1} - x^* \right\|^2]} & \overset{\eqref{root_descent}}{\le} \sqrt{\mathbb{E}\left[\left(\theta_{S_k}(\mu)\norm{x^k -x^*}  + \norm{z_{\mu}(x^*;S_k) -x^*}\right)^2\right]} \\
& \overset{\text{Lemma} \; \ref{lemma_minkowski}}{\le} \sqrt{\mathbb{E}\left[\theta_{S_k}^2(\mu)
\norm{x^k -x^*}^2 \right]}  + \sqrt{\mathbb{E} \left[\norm{z_{\mu}(x^*;S_k) -x^*}^2\right]} \\
& = \sqrt{\mathbb{E}\left[\theta_S^2(\mu)\right]} \sqrt{\mathbb{E}\left[\norm{x^k -x^*}^2\right]}   + \mu\sqrt{\mathbb{E}\left[\norm{\nabla f_{\mu}(x^*;S)}^2\right]} \\
& \overset{\text{Lemma} \; \ref{lemma_gradmiu_grad}}{\le} \sqrt{\mathbb{E}\left[\theta_S^2(\mu)\right]} \sqrt{\mathbb{E}[\norm{x^k -x^*}^2]}  + \mu\sqrt{\mathbb{E}\left[\norm{\nabla f(x^*;S)}^2\right]}.
\end{align*}
This  yields the following  relation valid for all $\mu > 0$ and $k \ge 0$:
\begin{equation}\label{th_conststep_reccurence}
\sqrt{\mathbb{E}[\left\|x^{k+1} - x^* \right\|^2]} \le \sqrt{\mathbb{E}\left[\theta_S^2(\mu)\right]} \sqrt{\mathbb{E}[\norm{x^k -x^*}^2]}
 + \mu\sqrt{\mathbb{E}\left[\norm{\nabla f(x^*;S)}^2\right]},
\end{equation}

\noindent Denote $r_k \!=\! \sqrt{\mathbb{E}[\norm{x^k -x^*}^2]}, \;
\eta \!=\! \sqrt{\mathbb{E}\left[\norm{\nabla f(x^*;S)}^2\right]]}$
and $\theta(\mu) \!=\!
\sqrt{\mathbb{E}\left[\theta_S^2(\mu)\right]}$. Then, we get:
\begin{equation*}
r_{k+1} \le \theta(\mu) r_k + \mu\eta.
\end{equation*}
Finally, a simple inductive argument leads to:
\begin{align*}
r_{k}
&\le r_0 \theta(\mu)^{k}  + \mu \eta \left[1 + \theta(\mu) + \cdots + \theta(\mu)^{k-1} \right] \\
& =  r_0 \theta(\mu)^{k}  + \mu\eta \frac{1 - \theta(\mu)^{k}}{1 - \theta(\mu)}\\
& \le  r_0 \theta(\mu)^{k}  + \frac{\mu\eta}{1 - \theta(\mu)}.
\end{align*}
By squaring and returning to our basic notations, we recover our statement.
\end{proof}

\noindent Theorem \ref{th_conststep} proves a linear convergence
rate in expectation for the sequence generated by the  SPP algorithm
with a constant stepsize $\mu
> 0$   when the sequence $\{x^k\}_{k \ge 0}$ is outside of a
\textit{noise dominated} neighborhood of the optimal set  of radius
$\frac{\mu \sqrt{\mathbb{E}[\norm{\nabla f(x^*;S)}^2]}}{1 -
\sqrt{\mathbb{E}[\theta_S^2(\mu)]}}$. It also  establishes the
boundedness of the sequence $\{x^k\}_{k \ge 0}$ when the stepsize is
constant. Notice that in \cite{RyuBoy:16} a similar result has been
given  for an unconstrained optimization model with the difference
that the convergence rate was provided for
$\mathbb{E}[\norm{x^k-x^*}]$. However, our proof is simpler and more
elegant, based on the  properties of Moreau approximation, despite
the fact that we consider the constrained case.

%%%%%%%%%%%%%%%%%%%%%%%%%%%%%%%%%%%%%%%%%%%%%%%%%%%%%%%%%%%%%%%%%%%%%%5

\subsection{Nonasymptotic sublinear convergence rate of variable stepsize SPP}
\noindent In this section we derive sublinear
 convergence rate of order $\mathcal{O}(1/k^\gamma)$ for the variable stepsize
SPP scheme, in a nonasymptotic fashion. We first prove the
boundedness of $\{x^k\}_{k \ge 0}$ when the stepsize is
nonincreasing, which will be useful for the subsequent convergence
results.

\begin{lemma}\label{lemma_varstepbounded}
Under Assumption \ref{assump_strongconvex}, let the sequence
$\{x^k\}_{k \ge 0}$ be generated by the algorithm SPP with
nonincreasing positive stepsize $\{\mu_k\}_{k \ge 0}$. Then, the
following relation holds:
\begin{equation*}
\mathbb{E}\left[\|x^{k} - x^* \|\right] \le \sqrt{\mathbb{E}[\left\|x^{k} - x^* \right\|^2]} \le
\max\left\{\norm{x^0-x^*},\frac{\mu_0\sqrt{\mathbb{E}\left[\norm{\nabla f (x^*;S)}^2\right]}}{1 - \sqrt{\mathbb{E}\left[\theta_S^2(\mu_0)\right]} }\right\}.
\end{equation*}
\end{lemma}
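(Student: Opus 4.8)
The plan is to reproduce the one-step contraction from the proof of Theorem~\ref{th_conststep}, but now allowing the stepsize to change with the iteration, and then to close a boundedness induction using a monotonicity property of an auxiliary scalar map. First I would fix an iteration $k$ and run the same chain of inequalities that produced \eqref{th_conststep_reccurence}, with the constant $\mu$ replaced by the current stepsize $\mu_k$: apply Lemma~\ref{lemma_non-as} with $x=x^k$, $z=x^*$, $S=S_k$, $\mu=\mu_k$ to get the contraction of $z_{\mu_k}(\cdot;S_k)$, add and subtract $x^*$, use the triangle inequality together with the nonexpansiveness of the projection (so $\norm{x^{k+1}-x^*}\le\norm{y^k-x^*}$), and finish with Minkowski (Lemma~\ref{lemma_minkowski}) and Lemma~\ref{lemma_gradmiu_grad}. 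Writing $r_k=\sqrt{\mathbb{E}[\norm{x^k-x^*}^2]}$, $\eta=\sqrt{\mathbb{E}[\norm{\nabla f(x^*;S)}^2]}$ and $\theta(\mu)=\sqrt{\mathbb{E}[\theta_S^2(\mu)]}$, this gives the per-step recurrence
$$ r_{k+1}\le \theta(\mu_k)\,r_k+\mu_k\eta. $$
Since $\sigma_F=\mathbb{E}[\sigma_{f,S}]>0$, the variable $\sigma_{f,S}$ is positive with positive probability, so $\theta_S^2(\mu)\le 1$ with strict inequality on that event and hence $\theta(\mu)<1$ for every $\mu>0$; all quantities below are therefore well-defined (note we do not need $\sigma_f^{\max}<\infty$ here).

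Next I would set $R=\max\{r_0,\ \mu_0\eta/(1-\theta(\mu_0))\}$ and prove $r_k\le R$ for all $k$ by induction. The base case $r_0\le R$ is immediate. Assuming $r_k\le R$, the recurrence yields $r_{k+1}\le\theta(\mu_k)R+\mu_k\eta$, so it suffices to verify $\mu_k\eta\le(1-\theta(\mu_k))R$, that is $\tfrac{\mu_k}{1-\theta(\mu_k)}\eta\le R$. Because $\{\mu_k\}$ is nonincreasing we have $\mu_k\le\mu_0$, so the inductive step reduces entirely to the claim that the scalar map $g(\mu)=\tfrac{\mu}{1-\theta(\mu)}$ is nondecreasing on $(0,\infty)$: granting this, $\tfrac{\mu_k}{1-\theta(\mu_k)}\le\tfrac{\mu_0}{1-\theta(\mu_0)}$, whence $\tfrac{\mu_k\eta}{1-\theta(\mu_k)}\le\tfrac{\mu_0\eta}{1-\theta(\mu_0)}\le R$ and the induction closes. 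The leftmost inequality of the statement, $\mathbb{E}[\norm{x^k-x^*}]\le\sqrt{\mathbb{E}[\norm{x^k-x^*}^2]}$, is just Jensen's inequality.

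The main obstacle is precisely the monotonicity of $g$. In the deterministic (single-$S$) case it is transparent, since $\theta_S(\mu)=1/(1+\mu\sigma_{f,S})$ gives $\tfrac{\mu}{1-\theta_S(\mu)}=\tfrac{1}{\sigma_{f,S}}+\mu$, plainly increasing; the difficulty is that $\theta(\mu)=\sqrt{\mathbb{E}[\theta_S^2(\mu)]}$ is an $L^2$-type average, so this is no longer evident. I would differentiate: with $t=\theta_S(\mu)\in(0,1]$ and $h(\mu)=\mathbb{E}[t^2]$, using $\mu\sigma_{f,S}=(1-t)/t$ one finds $\mu h'(\mu)=-2\,\mathbb{E}[t^2-t^3]$, and the sign of $g'(\mu)$ reduces to
$$ \sqrt{\mathbb{E}[t^2]}+\mathbb{E}[t^3]\ge 2\,\mathbb{E}[t^2]. $$
Setting $m=\mathbb{E}[t^2]\in(0,1]$, convexity of $s\mapsto s^{3/2}$ gives $\mathbb{E}[t^3]\ge m^{3/2}$, so the left-hand side is at least $m^{1/2}(1+m)\ge 2m$ by AM--GM, i.e. $(1-m^{1/2})^2\ge 0$. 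Hence $g'\ge 0$, the monotonicity holds, and the induction is complete. This monotonicity is the only genuinely new ingredient; everything else is a direct transcription of the proof of Theorem~\ref{th_conststep} with $\mu$ replaced by $\mu_k$.
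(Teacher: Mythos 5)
Your proof is correct, and its skeleton matches the paper's: take $\mu=\mu_k$ in the recurrence \eqref{th_conststep_reccurence} to get $r_{k+1}\le\theta(\mu_k)r_k+\mu_k\eta$, and then reduce everything to the claim that $\mu\mapsto\mu/(1-\theta(\mu))$ is nondecreasing (your induction against $R=\max\{r_0,\mu_0\eta/(1-\theta(\mu_0))\}$ is equivalent to the paper's observation that $\theta_k r_k+(1-\theta_k)\frac{\mu_k\eta}{1-\theta_k}\le\max\{r_k,\frac{\mu_k\eta}{1-\theta_k}\}$ followed by taking the maximum over $i\le k$). Where you genuinely diverge is in how that monotonicity is established, and your route is the more solid one. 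The paper asserts the identity $\frac{\mu}{1-\theta(\mu)}=\Bigl(\mathbb{E}\bigl[\tfrac{\sigma_{f,S}}{(1+\mu\sigma_{f,S})^2}+\tfrac{\sigma_{f,S}}{1+\mu\sigma_{f,S}}\bigr]\Bigr)^{-1}$ and invokes monotonicity of the bracketed expectation; but with $\theta(\mu)=\sqrt{\mathbb{E}[\theta_S^2(\mu)]}$ that expectation actually equals $\frac{1-\theta(\mu)^2}{\mu}$, not $\frac{1-\theta(\mu)}{\mu}$, so the paper's argument as written only yields monotonicity of $\mu/(1-\theta(\mu)^2)$, which does not immediately transfer (the extra factor $1+\theta(\mu)$ is decreasing). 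Your calculus argument --- reducing $g'\ge 0$ to $\sqrt{m}+\mathbb{E}[t^3]\ge 2m$ with $m=\mathbb{E}[t^2]$, then Jensen for $s\mapsto s^{3/2}$ and AM--GM --- proves exactly the monotonicity that is needed, so it closes a small gap in the published proof rather than merely replicating it. Your side remarks are also accurate: $\sigma_f^{\max}<\infty$ is not needed for the qualitative fact $\theta(\mu)<1$ (it is only used in Theorem \ref{th_conststep} for a quantitative contraction factor), and the leftmost inequality is just Jensen.
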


\begin{proof}
By taking $\mu=\mu_k$ in relation \eqref{th_conststep_reccurence}, we obtain:
\begin{align*}
\sqrt{\mathbb{E}[\left\|x^{k+1} - x^* \right\|^2]} \le
\sqrt{\mathbb{E}\left[\theta_S^2(\mu_k)\right]}
\sqrt{\mathbb{E}[\norm{x^k -x^*}^2]}  +
\mu_k\sqrt{\mathbb{E}\left[\norm{\nabla f(x^*;S)}^2\right]}.
\end{align*}

\noindent By using the notations $r_k =
\sqrt{\mathbb{E}[\left\|x^{k} - x^* \right\|^2]}, \theta_k =
\sqrt{\mathbb{E}\left[\theta_S^2(\mu_k) \right]}$ and $\eta =
\sqrt{\mathbb{E}\left[\norm{\nabla f (x^*;S)}^2\right]}$, the last
inequality leads to:
\begin{align}
r_{k+1} & \le \theta_k r_k + \left(1 - \theta_k \right)
\frac{\mu_k}{1 - \theta_k} \eta \nonumber\\ & \le
\max\left\{r_k, \frac{\mu_k}{1 - \theta_k} \eta \right\} \le
\max\left\{r_0, \frac{\mu_0}{1 - \theta_0}
\eta,\cdots,\frac{\mu_k}{1 - \theta_k} \eta \right\}.
\label{seqbound_ineq}
\end{align}
By observing the  fact that $t \mapsto \mathbb{E}\left[\frac{\sigma_{f,S}}{(1+t\sigma_{f,S})^2}
+ \frac{\sigma_{f,S}}{1+ t\sigma_{f,S}} \right]$ is nonincreasing in $t$, and implicitly:
\begin{align*}
\frac{\mu_{k-1}}{1 - \theta_{k-1}}
&= \frac{1}{\mathbb{E}\left[\frac{\sigma_{f,S}}{(1+\mu_{k-1}\sigma_{f,S})^2} + \frac{\sigma_{f,S}}{1+ \mu_{k-1}\sigma_{f,S}} \right]} \\
&\ge \frac{1}{\mathbb{E}\left[\frac{\sigma_{f,S}}{(1+\mu_{k}\sigma_{f,S})^2} + \frac{\sigma_{f,S}}{1+ \mu_{k}\sigma_{f,S}} \right]}
= \frac{\mu_{k}}{1 - \theta_k},
\end{align*}
then we have $\max\limits_{0 \le i \le k} \frac{\mu_{i}}{1 -
\theta_i} = \frac{\mu_{0}}{1 - \theta_0}$ and the relation
\eqref{seqbound_ineq} becomes:
\begin{align}
r_{k}  \le \max\left\{r_0, \frac{\mu_0}{1 - \theta_0} \eta
\right\} \quad \forall k\ge 0,
\end{align}
which implies our result.
\end{proof}

\noindent Furthermore, we need an upper bound on the sequence
$\{\mathbb{E}[\norm{\nabla f(x^k;S)}]\}_{k \ge 0}$:

\begin{lemma}\label{lemma_gradbound}
Under Assumptions \ref{assump_strongconvex} and \ref{assump_lipgrad}, let the  sequence
$\{x^k\}_{k \ge 0}$ be generated by the algorithm SPP with
nonincreasing positive stepsizes $\{\mu_k\}_{k \ge 0}$. Then, the
following holds:
\begin{equation*}
 \mathbb{E}[\norm{\nabla f(x^k;S)}^2] \le 2\mathbb{E}[\norm{\nabla f(x^*;S)}^2] + 2\mathbb{E}[L_{f,S}^2] \mathcal{A}^2,
\end{equation*}
where $\mathcal{A} = \max\left\{\norm{x^0-x^*},\frac{\mu_0\sqrt{\mathbb{E}\left[\norm{\nabla f (x^*;S)}^2\right]}}{1 - \sqrt{\mathbb{E}\left[\theta_S^2(\mu_0)\right]} }\right\}.$
\end{lemma}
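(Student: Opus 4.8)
The plan is to reduce the statement entirely to the boundedness estimate already established in Lemma \ref{lemma_varstepbounded}. The only genuinely new ingredient is a passage from a bound on $\mathbb{E}[\norm{x^k-x^*}^2]$ to a bound on $\mathbb{E}[\norm{\nabla f(x^k;S)}^2]$, and this is precisely what the Lipschitz gradient Assumption \ref{assump_lipgrad} supplies. So the whole proof is a short chain: triangle inequality, Lipschitz continuity of the gradient, and the previously proved quadratic boundedness of the iterates.

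Concretely, I would first fix $S$ and compare the gradient at $x^k$ with the gradient at the optimum, writing
\[
\norm{\nabla f(x^k;S)} \le \norm{\nabla f(x^*;S)} + \norm{\nabla f(x^k;S) - \nabla f(x^*;S)} \le \norm{\nabla f(x^*;S)} + L_{f,S}\norm{x^k-x^*},
\]
where the second inequality is exactly Assumption \ref{assump_lipgrad}. Squaring and applying the elementary bound $(a+b)^2 \le 2a^2 + 2b^2$ gives the pointwise (in $S$) estimate
\[
\norm{\nabla f(x^k;S)}^2 \le 2\norm{\nabla f(x^*;S)}^2 + 2 L_{f,S}^2 \norm{x^k-x^*}^2.
\]
I would then take expectations following the conditioning convention recalled just before the lemma: since $x^k$ is measurable with respect to the history $\mathcal{F}_{k-1} = \{S_0, \dots, S_{k-1}\}$ while $S$ is a fresh independent sample, conditioning on $\mathcal{F}_{k-1}$ freezes $x^k$ and averages only over $S$, so $L_{f,S}^2$ separates from $\norm{x^k-x^*}^2$. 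Taking the outer expectation over $\mathcal{F}_{k-1}$ then yields
\[
\mathbb{E}[\norm{\nabla f(x^k;S)}^2] \le 2\mathbb{E}[\norm{\nabla f(x^*;S)}^2] + 2\mathbb{E}[L_{f,S}^2]\, \mathbb{E}[\norm{x^k-x^*}^2].
\]
Finally, invoking Lemma \ref{lemma_varstepbounded}, which guarantees $\mathbb{E}[\norm{x^k-x^*}^2] \le \mathcal{A}^2$ with the very constant $\mathcal{A}$ appearing in the statement, substitution produces the claimed bound.

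The single point that requires care, rather than any real difficulty, is the factorization $\mathbb{E}[L_{f,S}^2\norm{x^k-x^*}^2] = \mathbb{E}[L_{f,S}^2]\,\mathbb{E}[\norm{x^k-x^*}^2]$: it hinges on $S$ being drawn independently of the history $\mathcal{F}_{k-1}$ that determines $x^k$, which is exactly what the condition-then-average argument makes rigorous. Everything else is a direct application of the triangle inequality, the Lipschitz property of the gradient, and the boundedness lemma, so no further estimates are needed.
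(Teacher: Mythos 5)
Your proof is correct and follows essentially the same route as the paper: triangle inequality plus the Lipschitz gradient assumption, the bound $(a+b)^2 \le 2a^2+2b^2$, expectation, and then Lemma \ref{lemma_varstepbounded}. Your explicit justification of the factorization $\mathbb{E}[L_{f,S}^2\norm{x^k-x^*}^2]=\mathbb{E}[L_{f,S}^2]\,\mathbb{E}[\norm{x^k-x^*}^2]$ via conditioning on $\mathcal{F}_{k-1}$ is a welcome clarification of a step the paper performs silently.
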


\begin{proof}
From the Lipschitz continuity of $\nabla f(\cdot;S)$ we have that $\norm{\nabla f(x;S) - \nabla f(x^*;S)} \le
L_{f,S}\norm{x-x^*}$ for all $x \in \rset^n$, which implies:
\begin{equation*}
 \norm{\nabla f(x^k;S)}^2 \le (\norm{\nabla f(x^*;S)} + L_{f,S} \norm{x^k - x^*})^2 \le 2\norm{\nabla f(x^*;S)}^2 + 2L_{f,S}^2 \norm{x^k - x^*}^2.
\end{equation*}
By taking expectation in both sides we get:
\begin{equation*}
 \mathbb{E}[\norm{\nabla f(x^k;S)}^2] \le 2\mathbb{E}[\norm{\nabla f(x^*;S)}^2] + 2\mathbb{E}[L_{f,S}^2] \mathbb{E}[\norm{x^k - x^*}^2].
\end{equation*}
Lastly, by using Lemma \ref{lemma_varstepbounded} we obtain our statement.
\end{proof}

\noindent We also present the following useful auxiliary result:

\begin{lemma}\label{lemma_main}
Let $\gamma \in (0,1]$ and the integers $p,q \in \mathbb{N}$ with $q
\ge p \ge 1 $.  Given the sequence of stepsizes $\mu_k =
\frac{\mu_0}{k^{\gamma}}$ for all  $k \ge 1$, where $\mu_0>0$, then
the following relation holds:
\begin{equation*}
\prod\limits_{i=p}^q \mathbb{E}[\theta_{S}^2(\mu_i)]  \le
\left(\mathbb{E}\left[\theta_S^2(\mu_0)
\right]\right)^{\varphi_{1-\gamma}(q+1) - \varphi_{1-\gamma}(p)}
\end{equation*}
\end{lemma}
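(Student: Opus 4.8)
The plan is to reduce the claim to a clean per-index inequality, namely $\mathbb{E}[\theta_S^2(\mu_i)] \le (\mathbb{E}[\theta_S^2(\mu_0)])^{1/i^{\gamma}}$ for every $i \ge 1$, and then multiply these bounds over $i = p,\dots,q$, leaving only a lower bound on the exponent sum $\sum_{i=p}^q i^{-\gamma}$ to establish. Throughout I would keep the notation $\theta_S(\mu) = 1/(1+\mu\sigma_{f,S})$ from Lemma \ref{lemma_non-as} and set $t_i = \mu_i/\mu_0 = i^{-\gamma}$, observing that $t_i \in (0,1]$ since $i \ge 1$ and $\gamma \in (0,1]$, and that $\mu_0\sigma_{f,S} \ge 0$ because $\sigma_{f,S} \ge 0$ under Assumption \ref{assump_strongconvex}.

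For the per-index bound I would work pointwise in $S$ first. Applying Bernoulli's inequality (Lemma \ref{lemma_bernoulli}) with exponent $t_i \in [0,1]$ and $x = \mu_0\sigma_{f,S} \ge -1$ gives $(1+\mu_0\sigma_{f,S})^{t_i} \le 1 + t_i\mu_0\sigma_{f,S} = 1 + \mu_i\sigma_{f,S}$, where the last equality uses $t_i\mu_0 = \mu_i$. Squaring and inverting both sides (all quantities are positive) yields the pointwise estimate $\theta_S^2(\mu_i) \le (\theta_S^2(\mu_0))^{t_i}$. Taking expectation over $S$ and then invoking Jensen's inequality for the \emph{concave} map $u \mapsto u^{t_i}$ (concave precisely because $t_i \in (0,1]$) gives $\mathbb{E}[\theta_S^2(\mu_i)] \le \mathbb{E}[(\theta_S^2(\mu_0))^{t_i}] \le (\mathbb{E}[\theta_S^2(\mu_0)])^{t_i}$, which is exactly the desired per-index bound.

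Multiplying over $i = p,\dots,q$ produces $\prod_{i=p}^q \mathbb{E}[\theta_S^2(\mu_i)] \le (\mathbb{E}[\theta_S^2(\mu_0)])^{\sum_{i=p}^q i^{-\gamma}}$. Since $x \mapsto x^{-\gamma}$ is decreasing, a standard integral comparison yields $\sum_{i=p}^q i^{-\gamma} \ge \int_p^{q+1} x^{-\gamma}\,dx = \varphi_{1-\gamma}(q+1) - \varphi_{1-\gamma}(p)$, where the antiderivative computation reproduces both the power case $\gamma \neq 1$ and, through the logarithm, the case $\gamma = 1$, matching the two branches in the definition of $\varphi_{\alpha}$. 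Finally, because $\mathbb{E}[\theta_S^2(\mu_0)] \le 1$, raising this base to a smaller exponent can only increase it, so replacing $\sum_{i=p}^q i^{-\gamma}$ by the smaller quantity $\varphi_{1-\gamma}(q+1) - \varphi_{1-\gamma}(p)$ preserves the inequality and gives the stated bound. The one step that demands care — and where I expect the main obstacle to lie — is tracking inequality directions at the junction of Jensen (which needs concavity, i.e. $t_i \le 1$) and the closing monotonicity argument (which needs $\mathbb{E}[\theta_S^2(\mu_0)] \le 1$); both of these hinge on $\gamma \le 1$ together with $\sigma_{f,S} \ge 0$, so it is worth stating explicitly why each holds before combining them.
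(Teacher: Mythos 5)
Your proof is correct and follows essentially the same route as the paper's: Bernoulli's inequality to get the pointwise bound $\theta_S^2(\mu_i) \le (\theta_S^2(\mu_0))^{1/i^\gamma}$, Jensen's inequality for the concave power map to pull the exponent outside the expectation, the integral comparison $\sum_{i=p}^q i^{-\gamma} \ge \varphi_{1-\gamma}(q+1)-\varphi_{1-\gamma}(p)$, and the fact that the base is at most one to close the argument. Your explicit attention to the inequality directions at the Jensen and monotonicity steps is a welcome clarification of details the paper states more tersely.
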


\begin{proof}
From definition of $\theta_S(\mu)$ for any $k \ge 1$ we have:
\begin{align}
\mathbb{E}[\theta_{S}^2(\mu_k)]
& = \mathbb{E}\left[\left(\frac{1}{1 + \mu_k\sigma_{f,S}}\right)^2\right] =
\mathbb{E}\left[\frac{1}{\left(1 + \frac{\mu_0}{k^{\gamma}}\sigma_{f,S}\right)^2}\right] \nonumber \\
& \overset{\text{Lemma} \; \ref{lemma_bernoulli}}{\le}
\mathbb{E}\left[\left(\frac{1}{1 +
\mu_0\sigma_{f,S}}\right)^{\frac{2}{k^{\gamma}}}\right] \le
\left(\mathbb{E}\left[\frac{1}{(1 +
\mu_0\sigma_{f,S})^2}\right]\right)^{\frac{1}{k^{\gamma}}} =
\left(\mathbb{E}[\theta_{S}^2(\mu_0)]
\right)^{\frac{1}{k^{\gamma}}}. \label{upbound_alphak}
\end{align}
By taking into account that $\mathbb{E}[\theta_{S}^2(\mu_0)] =
\mathbb{E}\left[\frac{1}{(1 + \mu_0\sigma_{f,S})^2}\right] \le 1$
and that
$$\sum\limits_{i=p}^q \frac{1}{i^{\gamma}} \ge \varphi_{1-\gamma}(q+1) - \varphi_{1-\gamma}(p) = \int\limits_{p}^{q+1} \frac{1}{t^{\gamma}} dt =
\begin{cases}
 \ln{\frac{q+1}{p}} & \text{if} \quad \gamma =1 \\
\frac{(q+1)^{1-\gamma} - p^{1 -\gamma}}{1 - \gamma}  & \text{if} \quad \gamma < 1,
\end{cases}$$
then the relation \eqref{upbound_alphak} implies:
\begin{align}
\prod\limits_{i=p}^q \mathbb{E}[\theta_{S}^2(\mu_i)] & \le
\left(\mathbb{E}\left[\theta_S^2(\mu_0)\right]\right)^{\sum\limits_{i=p}^q
\frac{1}{i^{\gamma}}}  \le
\left(\mathbb{E}\left[\theta_S^2(\mu_0)\right]\right)^{\varphi_{1-\gamma}(q+1)
- \varphi_{1-\gamma}(p)} \nonumber \\
& =
\begin{cases}
\left(\mathbb{E}\left[\theta_S^2(\mu_0)\right]\right)^{\ln{\frac{q+1}{p}}} & \text{if} \quad \gamma =1 \\
\left(\mathbb{E}\left[\theta_S^2(\mu_0)\right]\right)^{\frac{(q+1)^{1-\gamma}
- p^{1-\gamma}}{1 - \gamma}} & \text{if} \quad \gamma < 1,
\end{cases}
\end{align}
which immediately implies the above statement.
\end{proof}

\noindent Finally, we provide a non-trivial upper  bound on the
feasibility gap, which automatically leads to a iterative descent in
the distance to the feasible set of the sequence $\{x^k\}_{k \ge
0}$, generated by the SPP scheme with nonincreasing stepsizes.

\begin{lemma}
\label{lemma_feas}
Under  Assumptions \ref{assump_feasset}, \ref{assump_strongconvex}
and \ref{assump_lipgrad}, let the  sequence $\{x^k\}_{k \ge 0}$ be
generated by SPP scheme with nonincreasing stepsizes $\{\mu_k\}_{k
\ge 0}$. Then, the following relation holds:
% \footnote{\textcolor{red}{Both linear terms below are written
% in terms of $k/2$, thus they can be unified.}}:
\begin{align*}
 \sqrt{\mathbb{E}[\text{dist}_X^2(x^{k})]}
\le \left(1-\frac{1}{\kappa}\right)^{k/2} \left[\text{dist}_X(x^0) + 2\mu_0\kappa \mathcal{B}\right]
+ 2\mu_{k-\lceil\frac{k}{2}\rceil}\kappa \mathcal{B},
% \le \left(1-\frac{1}{\kappa}\right)^{k/2} \text{dist}_X(x^0) + 2\left(1 - \frac{1}{\kappa}\right)^{\lceil \frac{k}{2} \rceil+1}\mu_0\kappa \mathcal{B}
% + 2\mu_{k-\lceil\frac{k}{2}\rceil}\kappa \mathcal{B},
\end{align*}
where $ \mathcal{B} = \sqrt{2\mathbb{E}[\norm{\nabla f (x^*;S)}^2]} + \mathcal{A} \sqrt{2\mathbb{E}[L^2_{f,S}]}$.
\end{lemma}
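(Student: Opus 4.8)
The plan is to reduce the whole statement to a scalar one-step recurrence for $s_k := \sqrt{\mathbb{E}[\text{dist}_X^2(x^k)]}$ of the form $s_{k+1} \le \sqrt{1-1/\kappa}\,s_k + \mathcal{B}\mu_k$, and then unroll it. The crux is a purely geometric per-iteration estimate that turns a single projection onto one random set $X_{S_k}$ into a contraction of the distance to the whole intersection $X$. Fix $k$ and write $\bar{x} = [x^k]_X$ and $p^k = [x^k]_{X_{S_k}}$; since $X \subseteq X_{S_k}$ we have $\bar{x} \in X_{S_k}$. Because $x^{k+1} = [y^k]_{X_{S_k}}$ and $p^k = [x^k]_{X_{S_k}}$ are projections of $y^k$ and $x^k$ onto the \emph{same} set, nonexpansiveness gives $\norm{x^{k+1} - p^k} \le \norm{y^k - x^k} = \mu_k \norm{\nabla f_{\mu_k}(x^k;S_k)}$, while the obtuse-angle property of the projection onto the convex set $X_{S_k}$ (equivalently \eqref{strict_nonexpansiveness} with $z=\bar{x}$) yields the Pythagorean bound $\norm{p^k - \bar{x}}^2 \le \text{dist}_X^2(x^k) - \text{dist}_{X_{S_k}}^2(x^k)$. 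Combining these with $\text{dist}_X(x^{k+1}) \le \norm{x^{k+1} - \bar{x}} \le \norm{p^k - \bar{x}} + \norm{x^{k+1} - p^k}$ produces the pointwise estimate
\begin{equation*}
\text{dist}_X(x^{k+1}) \le \sqrt{\text{dist}_X^2(x^k) - \text{dist}_{X_{S_k}}^2(x^k)} + \mu_k \norm{\nabla f_{\mu_k}(x^k;S_k)}.
\end{equation*}

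Next I would pass to expectations. Taking $\sqrt{\mathbb{E}[\,\cdot\,^2]}$ of both sides and invoking the Minkowski inequality (Lemma \ref{lemma_minkowski} with $p=2$) separates the two terms. For the first, I take the conditional expectation over $S_k$ and apply the linear regularity of Assumption \ref{assump_feasset}$(ii)$, namely $\mathbb{E}[\text{dist}_{X_{S_k}}^2(x^k)\mid \mathcal{F}_{k-1}] \ge \frac{1}{\kappa}\text{dist}_X^2(x^k)$, which gives $\mathbb{E}[\text{dist}_X^2(x^k) - \text{dist}_{X_{S_k}}^2(x^k)] \le (1-\tfrac{1}{\kappa})\,\mathbb{E}[\text{dist}_X^2(x^k)]$; this is exactly where the contraction factor is born. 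For the second term, Lemma \ref{lemma_gradmiu_grad} bounds $\norm{\nabla f_{\mu_k}(x^k;S_k)}$ by $\norm{\nabla f(x^k;S_k)}$, and Lemma \ref{lemma_gradbound} together with $\sqrt{a+b}\le\sqrt{a}+\sqrt{b}$ bounds $\sqrt{\mathbb{E}[\norm{\nabla f(x^k;S)}^2]}$ by $\mathcal{B}$ (the constant $\mathcal{A}$, and hence $\mathcal{B}$, being finite by the boundedness Lemma \ref{lemma_varstepbounded}). Collecting these yields the advertised recurrence $s_{k+1} \le \sqrt{1-\tfrac{1}{\kappa}}\,s_k + \mathcal{B}\mu_k$.

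Finally I would unroll. Setting $\theta = \sqrt{1-1/\kappa}$ and noting $s_0 = \text{dist}_X(x^0)$, induction gives $s_k \le \theta^k \text{dist}_X(x^0) + \mathcal{B}\sum_{i=0}^{k-1}\theta^{k-1-i}\mu_i$. The geometric weights are controlled by the elementary bound $\frac{1}{1-\theta} = \frac{1}{1-\sqrt{1-1/\kappa}} \le 2\kappa$, which follows from $\sqrt{1-1/\kappa}\le 1-\frac{1}{2\kappa}$. Splitting the horizon at the midpoint $\lfloor k/2\rfloor = k-\lceil k/2\rceil$, i.e. propagating the recurrence separately over $\{0,\dots,\lfloor k/2\rfloor\}$ and over $\{\lfloor k/2\rfloor,\dots,k\}$ and using that $\{\mu_i\}$ is nonincreasing (so $\mu_i\le\mu_0$ on the first block and $\mu_i\le\mu_{k-\lceil k/2\rceil}$ on the second), separates a geometrically decaying contribution carrying the initial data $\text{dist}_X(x^0)+2\mu_0\kappa\mathcal{B}$ from a slowly varying tail $2\mu_{k-\lceil k/2\rceil}\kappa\mathcal{B}$ governed by the midpoint stepsize. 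Since $\theta^{\lceil k/2\rceil}\theta^{\lfloor k/2\rfloor}=\theta^k=(1-1/\kappa)^{k/2}$, this assembles into the two-term estimate of the statement.

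The main obstacle is the per-step contraction of the first paragraph: one must extract a genuine decrease of the distance to the \emph{intersection} $X$ from a single projection onto just one random set $X_{S_k}$, and it is precisely the marriage of the Pythagorean identity for that projection with the linear regularity constant $\kappa$ that makes the factor $(1-1/\kappa)$ emerge in expectation. The remaining ingredients --- Minkowski, the gradient bounds via Lemmas \ref{lemma_gradmiu_grad}--\ref{lemma_gradbound}, and the geometric summation --- are routine once this inequality is in hand.
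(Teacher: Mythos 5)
Your proof is correct, and its overall architecture coincides with the paper's: both establish the one-step recursion $\sqrt{\mathbb{E}[\text{dist}_X^2(x^{k+1})]}\le\sqrt{1-1/\kappa}\,\sqrt{\mathbb{E}[\text{dist}_X^2(x^{k})]}+\mathcal{B}\mu_k$ (the paper's version carries an extra, immaterial factor $\sqrt{1-1/\kappa}\le 1$ on the noise term) and then unroll it geometrically with the midpoint split and the bound $1/(1-\sqrt{1-1/\kappa})\le 2\kappa$. Where you differ is in how the contraction is extracted. The paper applies the strict nonexpansiveness inequality \eqref{strict_nonexpansiveness} to the projection the algorithm actually computes, $x^{k+1}=[y^k]_{X_{S_k}}$, with $z=[y^k]_X$, which together with linear regularity gives $\mathbb{E}[\text{dist}_X^2(x^{k+1})]\le(1-1/\kappa)\,\mathbb{E}[\text{dist}_X^2(y^k)]$, and then pushes $\sqrt{\mathbb{E}[\text{dist}_X^2(y^k)]}$ back to $\sqrt{\mathbb{E}[\text{dist}_X^2(x^k)]}+\mu_k\mathcal{B}$ via the prox displacement. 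You instead introduce the auxiliary, never-computed point $p^k=[x^k]_{X_{S_k}}$, obtain the contraction at $x^k$ \emph{before} the prox step, and transport it to $x^{k+1}$ through the nonexpansiveness of $[\cdot]_{X_{S_k}}$ applied to the pair $(y^k,x^k)$. Both routes combine exactly the same two ingredients --- the Pythagorean property of the projection married to linear regularity, and the displacement bound $\norm{y^k-x^k}=\mu_k\norm{\nabla f_{\mu_k}(x^k;S_k)}\le\mu_k\norm{\nabla f(x^k;S_k)}$ fed into Lemmas \ref{lemma_gradmiu_grad} and \ref{lemma_gradbound} --- and they deliver identical constants, so neither buys anything quantitative over the other; yours is perhaps slightly more self-contained in that it contracts the quantity $\text{dist}_X(x^k)$ that actually appears in the recursion, at the price of the auxiliary projection $p^k$. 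One shared caveat: after the midpoint split, the geometrically decaying contribution of the early noise terms comes with exponent roughly $\lceil k/2\rceil$ rather than $k$ in $(1-1/\kappa)^{(\cdot)/2}$, so strictly speaking both your argument and the paper's establish the stated inequality with $(1-1/\kappa)^{k/4}$ multiplying the $2\mu_0\kappa\mathcal{B}$ part of the bracket; this changes only a constant in the exponent of a vanishing term and is harmless for every subsequent use of the lemma.
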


\begin{proof}
By using the strictly nonexpansive property of the projection operator \eqref{strict_nonexpansiveness} and the linear regularity assumption, we obtain:
\begin{align}
 \mathbb{E}[\text{dist}_X^2(x^{k+1})]
& \le \mathbb{E}[\norm{x^{k+1} - [y^k]_X}^2]
\le \mathbb{E}[\norm{y^k - [y^k]_X}^2] - \mathbb{E}[\norm{y^k - x^{k+1}}^2] \nonumber \\
& \overset{\text{As.} \; \ref{assump_feasset}}{\le} \mathbb{E}[\norm{y^k - [y^k]_X}^2] - \frac{1}{\kappa}\mathbb{E}[\norm{y^k - [y^k]_X}^2] \nonumber \\
& = \left(1 - \frac{1}{\kappa} \right) \mathbb{E}[\text{dist}^2_X(y^k)]. \label{lemma_feas_feasdecrease}
\end{align}
On the other hand, from triangle inequality and Minkowski inequality, we obtain:
\begin{align}
 \sqrt{\mathbb{E}[\text{dist}_X^2(y^k)]}
& \le \sqrt{\mathbb{E}[\norm{y^k - [x^{k}]_X}^2]}
 \le \sqrt{\mathbb{E}[(\norm{y^k - x^k} + \text{dist}_X(x^k))^2]}\nonumber  \\
& \overset{\text{Lemma} \; \ref{lemma_minkowski}}{\le} \sqrt{\mathbb{E}[\norm{z_{\mu_k}(x^k;S_k) - x^k}^2]} + \sqrt{\mathbb{E}[\text{dist}_X^2(x^k)]} \nonumber \\
& = \sqrt{\mathbb{E}[\text{dist}_X^2(x^k)]}  + \mu_k\sqrt{\mathbb{E}[\norm{\nabla f_{\mu_k}(x^k;S_k)}^2]}\nonumber  \\
& \overset{\text{Lemma} \; \ref{lemma_gradmiu_grad}}{\le} \sqrt{\mathbb{E}[\text{dist}_X^2(x^k)]}  + \mu_k\sqrt{\mathbb{E}[\norm{\nabla f (x^k;S_k)}^2]}\nonumber  \\
& \overset{\text{Lemma} \; \ref{lemma_gradbound}}{\le}
\sqrt{\mathbb{E}[\text{dist}_X^2(x^k)]}  +
\mu_k\left(\sqrt{2\mathbb{E}[\norm{\nabla f (x^*;S)}^2]} +  D
\sqrt{2\mathbb{E}[L^2_{f,S}]}\right).\label{lemma_feas_rel1}
\end{align}
For simplicity we use notations: $\alpha = \sqrt{1-
\frac{1}{\kappa}}, d_k = \sqrt{\mathbb{E}[\text{dist}_X^2(x^k)]}$
and $  \mathcal{B} = \sqrt{2\mathbb{E}[\norm{\nabla f (x^*;S)}^2]} +
\mathcal{A} \sqrt{2\mathbb{E}[L^2_{f,S}]}$. Combining
\eqref{lemma_feas_feasdecrease} and \eqref{lemma_feas_rel1} yields:
\begin{align}
d_{k+1} & \le \alpha d_k + \alpha \mu_k \mathcal{B}  \le
\alpha^{k+1} d_0 + \mathcal{B} \sum\limits_{i=1}^{k+1} \alpha^i
\mu_{k-i+1}. \label{lemma_feas_prelimrate}
\end{align}
Define $m = \lceil \frac{k+1}{2} \rceil$. By dividing the sum from the right side of \eqref{lemma_feas_prelimrate} in two parts and
by taking into account that $\{\mu_k\}_{k \ge 0}$ is nonincreasing, then results:
\begin{align*}
\sum\limits_{i=1}^{k+1} \alpha^i \mu_{k-i+1}
& = \sum\limits_{i=1}^{m} \alpha^i \mu_{k-i+1}  + \sum\limits_{i=m+1}^{k+1} \alpha^i \mu_{k-i+1} \nonumber \\
& \le \mu_{k-m+1}\sum\limits_{i=1}^{m} \alpha^i   + \alpha^{m+1}\sum\limits_{i=0}^{k-m} \alpha^i \mu_{k-i-m} \nonumber \\
& \le \mu_{k-m+1} \frac{\alpha(1 - \alpha^{m})}{1-\alpha}   + \mu_0\alpha^{m+1} \frac{1- \alpha^{k-m+1}}{1-\alpha} \nonumber \\
& \le \mu_{k-m+1} \frac{\alpha}{1-\alpha}   + \alpha^{m+1} \frac{\mu_0}{1-\alpha}.
\end{align*}
By using the last inequality into \eqref{lemma_feas_prelimrate} and using the bound $\frac{\alpha}{1-\alpha} \le 2\kappa $, then these
facts imply the statement of the lemma.
\end{proof}

\noindent Now,  we are ready to  derive the nonasymptotic
convergence rate of the Algorithm SPP with nonincreasing stepsizes.
For simplicity, we will use the following exponential approximation:
\begin{equation}\label{exponential_approx}
 e^{x} \ge
 1 + x \qquad \forall x \ge 0.
%\frac{x^{\lceil \nu \rceil}}{\lceil \nu \rceil !} \qquad \forall x \ge 0, \; \nu > 0.
\end{equation}
We will also denote $\eta = \sqrt{\mathbb{E}[\norm{\nabla
f(x^*;S)}^2]}$ and keep the notations for $\mathcal{A}$ from Lemma
\ref{lemma_varstepbounded} and for $\mathcal{B}$ from Lemma
\ref{lemma_feas}.

\begin{theorem}\label{th_main_sc}
Under Assumptions \ref{assump_feasset}, \ref{assump_strongconvex} and
\ref{assump_lipgrad}, let the sequence $\{x^k\}_{k \ge 0}$ be
generated by the algorithm SPP with the stepsize  $\mu_k =
\frac{\mu_0}{k^\gamma}$ for all $k \geq 1$, with $\mu_0 > 0$ and
$\gamma \in (0,1]$, and denote $\theta_0 = \mathbb{E}\left[
\theta_{S}^2(\mu_0) \right]=  \mathbb{E}\left[\frac{1}{(1 + \mu_0
\sigma_{f,S})^2}\right]$. Then, the following relations hold:

\noindent $(i)$ If $\gamma \in (0,1)$, then we have the following
nonasymptotic convergence rates:
\begin{align*}
\mathbb{E}[\norm{ x^{k} - x^*}^2] & \!\le\!
\theta_0^{\varphi_{1-\gamma}(k)}r_0^2 +
\mathcal{D}\theta_0^{\varphi_{1-\gamma}(k) -
\varphi_{1-\gamma}(\frac{k+1}{2})}  \mu_0^2
\left[\varphi_{1-2\gamma}\left(\frac{k+1}{2} \right)+2\right]
 + \frac{\mathcal{D}\mu_0^2 4^{\gamma}}{(1-\theta_0)k^{\gamma}}.
\end{align*}

\noindent $(ii)$ If $\gamma = 1$,  then we have the following
nonasymptotic convergence rate:
\begin{align*}
\mathbb{E}[\norm{ x^{k} - x^*}^2] \le
\begin{cases}
\theta_0^{\varphi_{0}(k)}r_0^2 + \frac{2\mu_0^2}{k \left( \ln {\left(\frac{1}{\theta_0}\right)} - 1\right)}     & \text{if} \;\;\; \theta_0 < \frac{1}{e}\\
\theta_0^{\varphi_{0}(k)}r_0^2 +  \frac{2\mu_0^2\ln{k}}{k}     & \text{if} \;\;\; \theta_0 = \frac{1}{e}\\
\theta_0^{\varphi_{0}(k)}r_0^2 + \left( \frac{2}{k}\right)^{\ln
{\left(\frac{1}{\theta_0}\right)}} \frac{\mu_0^2}{1 - \ln
{\left(\frac{1}{\theta_0}\right)}}
    & \text{if} \;\;\; \theta_0 > \frac{1}{e},
\end{cases}
\end{align*}
where $\mathcal{D} = 4\norm{\nabla F(x^*)}\left[\frac{\text{dist}_X(x^0)+2\mu_0\kappa \mathcal{B}}{\mu_0\ln\left(\kappa/(\kappa -1)\right)}
+ 3^{\gamma}\mathcal{B}\kappa\right] +  2\eta \sqrt{2\eta^2 + 2\mathbb{E}[L_{f,S}^2]\mathcal{A}^2} + 2\eta \mathcal{A} \sqrt{\mathbb{E}[L_{f,S}^2]}$.
\end{theorem}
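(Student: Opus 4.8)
The goal is to control $\mathbb{E}[\|x^k-x^*\|^2]$ for the variable stepsize SPP scheme. My approach would be to establish a one-step recurrence in $r_k = \sqrt{\mathbb{E}[\|x^k-x^*\|^2]}$ that separates a \emph{contraction} term from a \emph{noise/feasibility} term, and then to unroll it into a product of contraction factors times an accumulated perturbation sum, which is exactly where Lemma \ref{lemma_main} and Lemma \ref{lemma_feas} will do the heavy lifting.

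\medskip

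\textbf{Step 1 (one-step recurrence).} I would start from the SPP update $y^k = z_{\mu_k}(x^k;S_k)$, $x^{k+1}=[y^k]_{X_{S_k}}$. The contraction property of the prox operator (Lemma \ref{lemma_non-as}) gives $\|z_{\mu_k}(x^k;S_k)-z_{\mu_k}(x^*;S_k)\|\le \theta_{S_k}(\mu_k)\|x^k-x^*\|$. Unlike the constant-stepsize Theorem \ref{th_conststep}, the projection step now moves $y^k$ toward the \emph{individual} set $X_{S_k}$ rather than toward $x^*\in X$, so the term $\|x^{k+1}-x^*\|$ cannot simply be bounded by $\|y^k-x^*\|$ with a free lunch; I expect the extra error to be governed by $\mathrm{dist}_X(x^k)$, i.e. the feasibility residual. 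So the recurrence I would aim for is schematically
\[
r_{k+1}\le \theta_k\, r_k + \mu_k\eta + (\text{feasibility correction}),
\]
where $\theta_k=\sqrt{\mathbb{E}[\theta_S^2(\mu_k)]}$ and the correction is controlled by $\sqrt{\mathbb{E}[\mathrm{dist}_X^2(x^k)]}$ together with the gradient bound from Lemma \ref{lemma_gradbound}. Here Minkowski's inequality (Lemma \ref{lemma_minkowski}) and Lemma \ref{lemma_gradmiu_grad} are the right tools to split the square root of expectations cleanly, exactly as in the derivation of \eqref{th_conststep_reccurence}.

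\medskip

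\textbf{Step 2 (unrolling and summing).} Iterating the recurrence produces
\[
r_k \le \Big(\prod_{i=1}^{k-1}\theta_i\Big) r_0 \;+\; \sum_{i} \Big(\prod_{j>i}\theta_j\Big)\big(\mu_i\eta + \text{correction}_i\big).
\]
The leading homogeneous term is handled by Lemma \ref{lemma_main}, which converts $\prod_i \theta_i$ into $\theta_0^{\varphi_{1-\gamma}(k)}$; this is precisely why $\varphi_{1-\gamma}$ and $\varphi_{1-2\gamma}$ appear in the statement. The noise sum $\sum_i (\prod_{j>i}\theta_j)\mu_i\eta$ I would split at the midpoint $m=\lceil (k+1)/2\rceil$: on the tail the stepsizes $\mu_i$ are uniformly small (giving the $\mathcal{O}(1/k^\gamma)$ term), while on the head the product of contraction factors is geometrically small (giving the $\theta_0^{\varphi_{1-\gamma}(k)-\varphi_{1-\gamma}((k+1)/2)}$ factor). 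For the feasibility correction I would substitute the bound from Lemma \ref{lemma_feas} and absorb the constants into the aggregate constant $\mathcal{D}$; this is also where the $\mathrm{dist}_X(x^0)$, $\mathcal{A}$, $\mathcal{B}$, and $\|\nabla F(x^*)\|$ pieces of $\mathcal{D}$ get assembled.

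\medskip

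\textbf{Step 3 (case split on $\gamma$).} The cases $\gamma\in(0,1)$ and $\gamma=1$ differ only in the closed form of $\varphi_{1-\gamma}$: for $\gamma<1$ one gets the polynomial exponent $\tfrac{(q+1)^{1-\gamma}-p^{1-\gamma}}{1-\gamma}$, while for $\gamma=1$ it becomes $\ln\frac{q+1}{p}$, which turns $\theta_0^{\varphi_0(k)}$ into a genuine power of $k$ and forces the three-way subcase depending on whether $\theta_0\lessgtr 1/e$ (the comparison $\ln(1/\theta_0)$ versus $1$). I would handle the $\gamma=1$ tail sum by comparing $\sum_i \theta_0^{\ln((k)/i)}\mu_i$ to an integral of the form $\int t^{\,\ln(1/\theta_0)-1}\,dt$, whose convergence/divergence at the three regimes yields exactly the $\frac{1}{k(\ln(1/\theta_0)-1)}$, $\frac{\ln k}{k}$, and $(2/k)^{\ln(1/\theta_0)}$ behaviors. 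The exponential approximation \eqref{exponential_approx} would let me bound $\theta_0^{\varphi_{1-\gamma}(\cdot)}$ by the right power of $k$.

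\medskip

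\textbf{Main obstacle.} The hard part is the \emph{coupling} between the optimization error $r_k$ and the feasibility residual $\mathrm{dist}_X(x^k)$: the projection onto $X_{S_k}$ does not keep iterates feasible for $X$, so the recurrence for $r_k$ is not self-contained and genuinely needs the independent feasibility rate of Lemma \ref{lemma_feas} fed in as an inhomogeneous term. Getting the two geometric rates — the contraction $\theta_0$ from the prox and the $(1-1/\kappa)$ feasibility contraction — to combine without the slower one contaminating the final estimate is the delicate bookkeeping, and the midpoint-splitting trick (mirroring the one already used inside Lemma \ref{lemma_feas}) is what makes the tail and head balance at the claimed $\mathcal{O}(1/k^\gamma)$ order. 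The remaining effort is the routine but lengthy consolidation of all constants into $\mathcal{D}$.
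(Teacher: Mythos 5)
There is a genuine gap in Step 1, and it is the step that carries the whole theorem. The recurrence you propose, $r_{k+1}\le \theta_k r_k+\mu_k\eta+(\text{correction})$ with $r_k=\sqrt{\mathbb{E}[\|x^k-x^*\|^2]}$ and $\theta_k=\sqrt{\mathbb{E}[\theta_S^2(\mu_k)]}$, is exactly the constant-stepsize recurrence \eqref{th_conststep_reccurence}, and it is structurally too weak to give the claimed rate: since $1-\theta_k=\mathcal{O}(\mu_k)$, the fixed point of this recurrence is $\frac{\mu_k\eta}{1-\theta_k}=\mathcal{O}(1)$, i.e.\ a \emph{non-vanishing} noise-dominated radius. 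This is precisely why the paper can extract only boundedness from it (Lemma \ref{lemma_varstepbounded}) and not convergence. The paper's proof instead expands $\frac{1}{2}\|z_{\mu_k}(x^k;S)-x^*\|^2$ \emph{exactly} as a sum of a contraction term and three cross/remainder terms (relation \eqref{convrateroot_ineq}) and shows that, in expectation, every inhomogeneous term is $\mathcal{O}(\mu_k^2)$ — see \eqref{boundproduct1}--\eqref{boundproduct3} — which yields the squared recurrence $r_{k+1}^2\le \theta_k r_k^2+\mu_k^2\mathcal{D}$ with $\theta_k=\mathbb{E}[\theta_S^2(\mu_k)]$. There the tail contribution is $\frac{\mu_k^2\mathcal{D}}{1-\theta_k}=\mathcal{O}(\mu_k)$, which is what produces the $\mathcal{O}(1/k^{\gamma})$ bound on the \emph{squared} distance. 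A Minkowski/triangle-inequality split of $\|z_{\mu_k}(x^k;S)-x^*\|$ destroys this: the cross term $2\theta_k r_k\cdot\mu_k\eta$ is only $\mathcal{O}(\mu_k)$ (since $r_k$ is merely bounded by $\mathcal{A}$), and the resulting rate does not go to zero.

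Your diagnosis of \emph{why} the feasibility residual enters is also off. Since $x^*\in X\subseteq X_{S_k}$, the projection onto $X_{S_k}$ \emph{is} nonexpansive toward $x^*$, and the paper uses $\|x^{k+1}-x^*\|\le\|y^k-x^*\|$ without any correction; there is no loss at the projection step. The feasibility rate of Lemma \ref{lemma_feas} is needed for a different reason: the a priori $\mathcal{O}(\mu_k)$ term $-\mu_k\mathbb{E}[\langle\nabla f(x^*;S),x^k-x^*\rangle]$ cannot be discarded by the optimality condition $\langle\nabla F(x^*),z-x^*\rangle\ge 0$ because $x^k\notin X$; one must insert $[x^k]_X$ and pay $\|\nabla F(x^*)\|\sqrt{\mathbb{E}[\mathrm{dist}_X^2(x^k)]}$, and Lemma \ref{lemma_feas} shows this residual is itself $\mathcal{O}(\mu_k)$, upgrading the whole term to the required $\mathcal{O}(\mu_k^2)$. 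Your Steps 2 and 3 (unrolling, Lemma \ref{lemma_main}, the midpoint split, and the three-way case analysis at $\theta_0\lessgtr 1/e$ for $\gamma=1$) do match the paper's argument in outline, but they cannot rescue a one-step recurrence whose inhomogeneous term is $\mathcal{O}(\mu_k)$ rather than $\mathcal{O}(\mu_k^2)$.
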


\begin{proof}
Let $\mu>0, x \in \rset^n$ and $S \in \Omega$, then we have:
\begin{align}
& \frac{1}{2}\norm{z_{\mu}(x;S) - x^*}^2 \nonumber \\
& = \frac{1}{2}\norm{z_{\mu}(x;S)-z_{\mu}(x^*;S)}^2 + \langle z_{\mu}(x;S) - z_{\mu}(x^*;S), z_{\mu}(x^*;S) -x^*\rangle + \frac{1}{2}\norm{z_{\mu}(x^*;S) -x^*}^2 \nonumber\\
& \le \frac{\theta_{S}^2(\mu)}{2}\norm{x-x^*}^2 - \mu\langle \nabla f(x^*;S), x -x^*\rangle + \langle z_{\mu}(x^*;S) -x^* + \mu\nabla f(x^*;S),x-x^* \rangle \nonumber \\
& \hspace{3cm} + \langle z_{\mu}(x;S) - x, z_{\mu}(x^*;S) -
x^*\rangle  -  \frac{\mu^2}{2}\norm{\nabla f_{\mu}(x^*;S)}^2.
\label{convrateroot_ineq}
\end{align}
Now we take expectation in both sides and consider $x=x^k$ and $ \mu
= \mu_k$. We thus seek a bound for each term from the right hand
side in \eqref{convrateroot_ineq}. For the second term, by using the
optimality conditions $\langle \nabla F(x^*),  z - x^* \rangle \ge
0$ for all $z \in X$, we have:
\begin{align}
\mathbb{E}[\langle & \nabla f(x^*;S),  x^* - x^k\rangle]
 = \mathbb{E}[\langle \nabla F(x^*), x^* - [x^k]_X \rangle] + \mathbb{E}[\langle \nabla F(x^*), [x^k]_X -x^k\rangle] \nonumber\\
& \le  \mathbb{E}[\langle \nabla F(x^*), [x^k]_X -x^k\rangle]  \nonumber\\
& \le \norm{\nabla F(x^*)} \;  \mathbb{E}[\text{dist}_X(x^k)] \le \norm{\nabla F(x^*)} \;  \sqrt{\mathbb{E}[\text{dist}_X^2(x^k)]} \nonumber \\
& \overset{\text{Lemma} \;\ref{lemma_feas}}{\le} \norm{\nabla
F(x^*)} \left[\left(1-\frac{1}{\kappa}\right)^{\frac{k}{2}} \left(\text{dist}_X(x^0) + 2\mu_0\kappa \mathcal{B}\right) + 2\mu_{k-\lceil \frac{k}{2}\rceil}\kappa\mathcal{B} \right]. \nonumber
\end{align}
By using relation \eqref{exponential_approx} and the fact that $\frac{1}{k} \le \frac{1}{k^{\gamma}} $ when $k \ge 1$ and $\gamma \in (0,1]$, then the last inequality implies:
\begin{align}
\mathbb{E}[\langle  \nabla f(x^*;S),  & x^* - x^k\rangle]
 \le \norm{\nabla F(x^*)} \left[\frac{2\text{dist}_X(x^0)+4\mu_0\kappa \mathcal{B}}{k \ln\left(\kappa/(\kappa -1)\right)}  + 2 \mu_{k-\lceil \frac{k}{2}\rceil}\mathcal{B}\kappa  \right]  \nonumber \\
& \le \mu_k\norm{\nabla F(x^*)} \left[\frac{2\text{dist}_X(x^0)+4\mu_0\kappa \mathcal{B}}{\mu_0\ln\left(\kappa/(\kappa -1)\right)}  + \frac{2 \mu_{k-\lceil \frac{k}{2}\rceil}\mathcal{B}\kappa}{\mu_k}  \right]. \label{boundproduct1}
\end{align}
For the third term in \eqref{convrateroot_ineq} we observe from the
optimality conditions for $z_{\mu_k}(x^*;S)$ that:
\begin{align*}
\left\|\frac{1}{\mu_k}\left(z_{\mu_k}(x^*;S) -x^* \right) + \nabla f (x^*;S)\right\|
& = \norm{\nabla f(z_{\mu_k}(x^*;S);S) - \nabla f(x^*;S)} \\
& \le L_{f,S} \norm{z_{\mu_k}(x^*;S) - x^*} = \mu_k L_{f,S} \norm{\nabla f_{\mu_k}(x^*;S)} \\
& \overset{\text{Lemma} \; \ref{lemma_gradmiu_grad}}{\le} \mu_k L_{f,S} \norm{\nabla f(x^*;S)},
\end{align*}
which yields the following bound:
\begin{align*}
 \langle z_{\mu_k}(x^*;S) -x^* + \mu_k\nabla f(x^*;S),x^k-x^* \rangle
& \le \norm{z_{\mu_k}(x^*;S) -x^* + \mu_k\nabla f(x^*;S)} \cdot \norm{x^k-x^*} \\
& \le \mu_k^2 L_{f,S} \norm{\nabla f(x^*;S)} \cdot \norm{x^k-x^*}.
\end{align*}
By taking expectation in both sides and using Lemma
\ref{lemma_varstepbounded}, we obtain the
refinement:
\begin{align}
\mathbb{E}[\langle z_{\mu_k}(x^*;S) -x^* + \mu_k\nabla f(x^*;S),x^k-x^* \rangle]
 & \le \mu_k^2 \sqrt{\mathbb{E}[L_{f,S}^2]} \sqrt{\mathbb{E}[\norm{\nabla f(x^*;S)}^2]}\mathbb{E}[\norm{x^k-x^*}] \nonumber\\
 & \overset{\text{Lemma} \; \ref{lemma_varstepbounded}}{\le} \mu_k^2 \sqrt{\mathbb{E}[L_{f,S}^2]}  \eta \mathcal{A}. \label{boundproduct2}
\end{align}
Finally, for the fourth term in \eqref{convrateroot_ineq} we use Lemma \ref{lemma_gradbound}:
\begin{align}
& \mathbb{E}[\langle z_{\mu_k}(x^k;S) - x^k, z_{\mu_k}(x^*;S) -
x^*\rangle] = \mu_k^2 \mathbb{E}[\norm{\nabla f_{\mu_k}(x^k;S)}\norm{\nabla f_{\mu_k}(x^*;S)}] \nonumber \\
& \overset{\text{Lemma} \; \ref{lemma_gradmiu_grad}}{\le} \mu_k^2 \mathbb{E}[\norm{\nabla f(x^k;S)}\norm{\nabla f(x^*;S)}]  \le \mu_k^2 \sqrt{\mathbb{E}[\norm{\nabla f(x^k;S)}^2]\mathbb{E}[\norm{\nabla f(x^*;S)}^2]} \nonumber  \\
& \overset{\text{Lemma} \; \ref{lemma_gradbound}}{\le} \mu_k^2 \eta \sqrt{2\eta^2 + 2 \mathbb{E}[L_{f,S}^2]\mathcal{A}^2} . \label{boundproduct3}
\end{align}
By taking expectation in \eqref{convrateroot_ineq}, using the
relations \eqref{boundproduct1}-\eqref{boundproduct3} and taking into account that $\frac{\mu_k}{\mu_{k-\lceil \frac{k}{2}\rceil}} \le 3^{\gamma}$ for all $k \ge 1$, we obtain:
\begin{align*}
& \mathbb{E}[\norm{z_{\mu_k}(x^k;S) - x^*}^2] \\
&  \le   \mathbb{E}\left[\theta_{S}^2(\mu_k) \norm{x^k - x^*}^2\right]   + 4\mu_k^2 \norm{F(x^*)}\left[\frac{\text{dist}_X(x^0)+2\mu_0\kappa \mathcal{B}}{\mu_0\ln\left(\kappa/(\kappa -1)\right)}
+ 3^{\gamma}\mathcal{B}\kappa  \right] \\
& \hspace{3cm} +  2\mu_k^2\eta \sqrt{2\eta^2 + 2\mathbb{E}[L_{f,S}^2]\mathcal{A}^2} + 2\mu_k^2\eta \mathcal{A} \sqrt{\mathbb{E}[L_{f,S}^2]} \\
& =   \mathbb{E}\left[\theta_{S}^2(\mu_k)\right]
\mathbb{E}[\norm{x^k - x^*}^2] + \mu_k^2\mathcal{D}.
\end{align*}
For simplicity, we use further in the proof the following notations:
$r_k = \sqrt{\mathbb{E}[\norm{x^k-x^*}^2]}$ and $\theta_k =
\mathbb{E}[\theta_{S}^2(\mu_k)]$. Then, through the nonexpansiveness
property of the projection operator, the previous inequality turns
into:
\begin{align}
r_{k+1}^2 & \le \mathbb{E}[\norm{z_{\mu_k}(x^k;S) - x^*}^2]  \le \theta_k r_k^2 + \mu_k^2
\mathcal{D} \nonumber \\
& \le r_0^2 \prod\limits_{i=0}^k \theta_i + \mathcal{D}
\sum\limits_{i=0}^k \left(\prod\limits_{j=i+1}^k \theta_j \right)
\mu_{i}^2. \label{upbound_rk}
\end{align}
To further refine the right hand side in \eqref{upbound_rk}, we
first notice   from Lemma \ref{lemma_main} that we have
$\prod\limits_{i=0}^k \theta_i \le
\theta_0^{\varphi_{1-\gamma}(k+1)}$. Then, from \eqref{upbound_rk}
we can  derive different upper bounds for the two cases of the
parameter $\gamma$:  $\gamma < 1$ and $\gamma = 1$.\\

\noindent  Case $(i)$ $\gamma < 1$.  From Lemma \ref{lemma_main}, we
derive an upper approximation for the second term in the right hand
side of \eqref{upbound_rk}. Therefore, if we let $m = \left\lceil
\frac{k}{2}\right\rceil$ we obtain:
\begin{align}
\sum\limits_{i=0}^k \mu_{i}^2 & \left(\prod\limits_{j=i+1}^k \theta_j\right)
= \sum\limits_{i=0}^m \mu_{i}^2\left(\prod\limits_{j=i+1}^k \theta_j\right)
+ \sum\limits_{i=m+1}^k \mu_{i}^2\left(\prod\limits_{j=i+1}^k \theta_j \right)  \nonumber\\
& \overset{\text{Lemma} \; \ref{lemma_main}}{\le}
\sum\limits_{i=0}^m \mu_{i}^2 \theta_0^{\varphi_{1-\gamma}(k+1) -
\varphi_{1-\gamma}(i+1)}  + \mu_{m+1}
\sum\limits_{i=m+1}^k \mu_{i} \left(\prod\limits_{j=i+1}^k \theta_j\right)  \nonumber\\
& \le \theta_0^{\varphi_{1-\gamma}(k+1) - \varphi_{1-\gamma}(m+1)}  \sum\limits_{i=0}^m \mu_{i}^2   + \mu_{m+1} \sum\limits_{i=m+1}^k \mu_{i} \left(\prod\limits_{j=i+1}^k \theta_j\right)  \nonumber\\
& = \theta_0^{\varphi_{1-\gamma}(k+1) - \varphi_{1-\gamma}(m+1)}
\sum\limits_{i=0}^m \mu_{i}^2   + \mu_{m+1} \sum\limits_{i=m+1}^k
\frac{\mu_{i}}{1-\theta_i} (1-\theta_i) \left(\prod\limits_{j=i+1}^k
\theta_j\right). \label{th_main_sc_prelimrate1}
\end{align}
We will further refine the right hand side  of
\eqref{th_main_sc_prelimrate1} by noticing the following two facts.
First, the constant $\frac{\mu_i}{1-\theta_i}$ can be upper bounded
by:
\begin{align*}
\frac{\mu_{i}}{1 - \theta_{i}} =
\frac{1}{\mathbb{E}\left[\frac{\sigma_S}{(1+ \mu_{i}\sigma_S)^2} +
\frac{\sigma_S}{1+ \mu_{i}\sigma_S} \right]} \le \frac{\mu_{i-1}}{1
- \theta_{i-1}} \le \cdots \le \frac{\mu_{0}}{1 - \theta_{0}}.
\end{align*}
Second, the sum of products is upper bounded as:
\begin{align*}
 \sum\limits_{i=m+1}^k (1-\theta_i) \left(\prod\limits_{j=i+1}^k \theta_j\right)
& = \sum\limits_{i=m+1}^k \left( \prod\limits_{j=i+1}^k \theta_j - \prod\limits_{j=i}^k \theta_j\right)  = 1 - \prod\limits_{j=m+1}^k \theta_j \le 1.
\end{align*}
By using the last two inequalities  into
\eqref{th_main_sc_prelimrate1}, we have:
\begin{align}
\sum\limits_{i=0}^k \mu_{i}^2\left(\prod\limits_{j=i+1}^k
\theta_j\right) & \le \theta_0^{\varphi_{1-\gamma}(k+1) -
\varphi_{1-\gamma}(m+1)}  \sum\limits_{i=0}^m \mu_{i}^2   +
\mu_{m+1} \frac{\mu_0}{1-\theta_0}. \label{th_main_sc_prelimrate2}
\end{align}
Since $\sum\limits_{i=0}^m \mu_{i}^2 \le \mu_0^2 (\varphi_{1-2\gamma}(m)+2) \le \mu_0^2 (\varphi_{1-2\gamma}(m)+2) \le \mu_0^2 [\varphi_{1-2\gamma}\left(\frac{k}{2} + 1 \right)+2] $ and  using
\eqref{th_main_sc_prelimrate2} into \eqref{upbound_rk}, we obtain
the above result.\\

\noindent  Case $(ii)$ $\gamma = 1$.  In this case  we have:
\begin{align*}
& \sum\limits_{i=1}^k \mu_{i}^2\left(\prod\limits_{j=i+1}^k
\theta_j\right) \overset{\text{Lemma} \; \ref{lemma_main}}{\le} \sum\limits_{i=1}^k \mu_{i}^2  \theta_0^{\varphi_{0}(k+1) - \varphi_0(i+1)} \nonumber \\
& = \sum\limits_{i=1}^k \frac{\mu_1^2}{i^2} \theta_0^{\ln
{\frac{k+1}{i+1}}} = \sum\limits_{i=1}^k \frac{\mu_1^2}{i^2}
\left(\frac{k+1}{i+1} \right)^{\ln { \theta_0  }}
\le \left( \frac{1}{k}\right)^{ \ln {\left(\frac{1}{\theta_0}\right)}} \sum\limits_{i=1}^k \frac{\mu_1^2}{i^{2 - \ln { \frac{1}{\theta_0}}}} \nonumber \\
& \le \left( \frac{1}{k}\right)^{\ln
{\left(\frac{1}{\theta_0}\right)}} \mu_0^2 \varphi_{\ln {
\frac{1}{\theta_0}} - 1}(k). \nonumber
\end{align*}
Therefore, the variation of $\theta_0$ leads to the following cases:
\begin{align*}
\sum\limits_{i=1}^k \mu_{i}^2\left(\prod\limits_{j=i+1}^k \theta_j\right) \le
\begin{cases}
\frac{\mu_0^2}{ k \left( \ln {\left(\frac{1}{\theta_0}\right)} - 1\right)}     & \text{if} \; \theta_0 < \frac{1}{e}\\
\frac{\mu_0^2\ln{k}}{k}     & \text{if} \; \theta_0 = \frac{1}{e}\\
\left( \frac{1}{k}\right)^{\ln {\left(\frac{1}{\theta_0}\right)}}
\frac{\mu_0^2}{1 - \ln {\left(\frac{1}{\theta_0}\right)}}     &
\text{if} \; \theta_0 > \frac{1}{e},
\end{cases}
\end{align*}
which leads to the second part of the result.
\end{proof}

\noindent For  more clear estimates of the  convergence rates
obtained in Theorem \ref{th_main_sc}, we provide in the next
corollary a summary given in terms of the dominant terms:

\begin{corrolary}\label{corr_main_sc}
Under the assumptions of Theorem \ref{th_main_sc} the following
convergence rates hold:\\
\noindent $(i)$ If $\gamma \in (0,1)$, then we have convergence rate
of order:
\begin{align*}
\mathbb{E}[\norm{ x^{k} - x^*}^2]
& \le \mathcal{O}\left(\frac{1}{k^{\gamma}}\right)
\end{align*}
\noindent $(ii)$ If $\gamma = 1$, then we have convergence rate of
order:
\begin{align*}
\mathbb{E}[\norm{ x^{k} - x^*}^2] \le
\begin{cases}
\mathcal{O}\left(\frac{1}{ k}\right)     & \text{if} \; \theta_0 < \frac{1}{e}\\
\mathcal{O}\left(\frac{\ln{k}}{ k}\right)     & \text{if} \; \theta_0 = \frac{1}{e}\\
\mathcal{O}\left(\frac{1}{k}\right)^{2 \ln
{\left(\frac{1}{\theta_0}\right)}}  & \text{if} \; \theta_0 >
\frac{1}{e}.
\end{cases}
\end{align*}
\end{corrolary}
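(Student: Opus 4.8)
The plan is to derive the corollary directly from the explicit bounds established in Theorem \ref{th_main_sc} by extracting the dominant term in each case through elementary asymptotic analysis. The guiding principle throughout is that, since $0 < \theta_0 < 1$, any factor of the form $\theta_0^{\varphi_{1-\gamma}(\cdot)}$ decays exponentially in a power of $k$ and is therefore negligible against the genuinely polynomial terms appearing in the bound.

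For case $(i)$, $\gamma \in (0,1)$, I would first rewrite the leading term as $\theta_0^{\varphi_{1-\gamma}(k)} = \exp\left(\frac{k^{1-\gamma}-1}{1-\gamma}\ln\theta_0\right)$ and observe that, because $\ln\theta_0 < 0$, this is a stretched exponential $\exp(-ck^{1-\gamma})$ with $c = \frac{\ln(1/\theta_0)}{1-\gamma} > 0$; such a quantity is $o(k^{-p})$ for every $p>0$, so $\theta_0^{\varphi_{1-\gamma}(k)}r_0^2$ is swallowed by $\mathcal{O}(k^{-\gamma})$. The middle term requires slightly more care: I would bound the exponent gap $\varphi_{1-\gamma}(k)-\varphi_{1-\gamma}(\frac{k+1}{2})$ from below by a positive multiple of $k^{1-\gamma}$, using an elementary estimate such as $\frac{k+1}{2} \le \frac{2k}{3}$ for $k \ge 3$ (which yields $k^{1-\gamma}-(\frac{k+1}{2})^{1-\gamma} \ge (1-(2/3)^{1-\gamma})k^{1-\gamma}$), so that $\theta_0^{\varphi_{1-\gamma}(k)-\varphi_{1-\gamma}(\frac{k+1}{2})}$ again decays like $\exp(-c'k^{1-\gamma})$. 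I would then argue that this stretched-exponential decay dominates the accompanying prefactor $\varphi_{1-2\gamma}(\frac{k+1}{2})+2$, which grows at most polynomially (of degree $1-2\gamma$ when $\gamma<1/2$, logarithmically when $\gamma=1/2$, and bounded when $\gamma>1/2$). Consequently only the last term $\frac{\mathcal{D}\mu_0^2 4^{\gamma}}{(1-\theta_0)k^{\gamma}}$ survives, giving $\mathcal{O}(1/k^{\gamma})$.

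For case $(ii)$, $\gamma=1$, the exponential factor degenerates into a pure polynomial rate: since $\varphi_0(k)=\log k$, the factor $\theta_0^{\varphi_0(k)}=k^{\log\theta_0}$ becomes the power $k^{-\ln(1/\theta_0)}$. The three subcases are then governed by comparing this exponent against the rate of the second term, with the threshold $\theta_0=1/e$ being exactly the point where $\ln(1/\theta_0)=1$, i.e.\ where $k^{-\ln(1/\theta_0)}$ crosses $1/k$. When $\theta_0<1/e$ one has $\ln(1/\theta_0)>1$, so the second term $\frac{2\mu_0^2}{k(\ln(1/\theta_0)-1)}=\mathcal{O}(1/k)$ dominates; when $\theta_0=1/e$ the logarithmic factor in $\frac{2\mu_0^2\ln k}{k}$ makes it the leading contribution, giving $\mathcal{O}(\ln k/k)$; and when $\theta_0>1/e$ both the polynomial term $k^{-\ln(1/\theta_0)}r_0^2$ and the second term $(2/k)^{\ln(1/\theta_0)}\frac{\mu_0^2}{1-\ln(1/\theta_0)}$ are of the same order, producing the stated power-law decay.

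The main obstacle I anticipate is the middle term of case $(i)$ for small $\gamma$, in particular $\gamma<1/2$, where the prefactor $\varphi_{1-2\gamma}(\frac{k+1}{2})+2$ itself grows polynomially; here one must make explicit that the genuinely exponential decay $\exp(-c'k^{1-\gamma})$ overwhelms any polynomial growth. This is routine, but it relies on the lower bound for the exponent gap $\varphi_{1-\gamma}(k)-\varphi_{1-\gamma}(\frac{k+1}{2})$ being stated as a concrete nonasymptotic inequality rather than merely an asymptotic equivalence, so that the resulting estimate is valid for all $k$ and not just in the limit.
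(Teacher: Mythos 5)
Your proposal is correct and follows essentially the same route as the paper: read off the dominant term from the explicit bounds of Theorem \ref{th_main_sc}. The one technical difference is in how the middle term of case $(i)$ is killed. You invoke the fact that the stretched exponential $\exp(-c'k^{1-\gamma})$ beats any polynomial prefactor; the paper instead applies the elementary inequality $e^{-x}\le \frac{1}{1+x}$ to the factor $\theta_0^{\varphi_{1-\gamma}(k+1)-\varphi_{1-\gamma}(\frac{k+1}{2})}$, which yields only a polynomial decay of order $k^{-(1-\gamma)}$ but, multiplied against the prefactor $\varphi_{1-2\gamma}(\cdot)=\mathcal{O}(k^{1-2\gamma})$ in the worst case $\gamma<1/2$, still gives exactly $\mathcal{O}(k^{-\gamma})$ with an explicit nonasymptotic constant. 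Your version is cleaner and gives a strictly stronger conclusion (the middle term is $o(k^{-p})$ for every $p$); the paper's version has the advantage of producing a concrete constant without appealing to an asymptotic comparison, and it separates the subcases $\gamma<1/2$, $\gamma=1/2$, $\gamma>1/2$ explicitly, as you anticipated one must. Your treatment of case $(ii)$ matches the paper's (which uses $\theta_0^{\ln(k+1)}\le(1/k)^{\ln(1/\theta_0)}$). One caveat: for $\theta_0>1/e$ your derivation, like the paper's own proof, yields the exponent $\ln(1/\theta_0)$, whereas the corollary as stated claims $2\ln(1/\theta_0)$; you should not assert that your bound "produces the stated power-law decay" without addressing this factor of $2$, which appears to be an inconsistency in the statement itself (note also the mismatched thresholds $1/e$ versus $1/\sqrt{e}$ in the surrounding discussion) rather than a gap in your argument.
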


\begin{proof}
First assume that $\gamma \in (0, \frac{1}{2})$. This assumption
implies that $1 - 2\gamma > 0$ and that:
\begin{align}\label{corr_main_sc_est1}
\varphi_{1-2\gamma}\left(\frac{k}{2}+2\right) = \frac{\left(\frac{k}{2} + 2 \right)^{1-2\gamma} - 1}{1-2\gamma}\le \frac{\left(\frac{k}{2} + 2 \right)^{1-2\gamma}}{1-2\gamma}.
\end{align}
On the other hand, by using the inequality $e^{-x} \le \frac{1}{1 + x}$ for all $x \in \rset^{}$, we obtain:
\begin{align*}
& \theta_0^{\varphi_{1-\gamma}(k+1) - \varphi_{1-\gamma}(\frac{k+1}{2})}
\varphi_{1-2\gamma}\left(\frac{k}{2} +2 \right)
=  e^{(\varphi_{1-\gamma}(k+1) - \varphi_{1-\gamma}(\frac{k+1}{2}))\ln {\theta_0}} \varphi_{1-2\gamma}\left(\frac{k}{2} + 2 \right) \\
&\le \frac{\varphi_{1-2\gamma}\left(\frac{k}{2} + 2 \right)}{1 +
[\varphi_{1-\gamma}(k+1) -
\varphi_{1-\gamma}(\frac{k}{2}+1)]\ln{\frac{1}{\theta_0}}}
\overset{\eqref{corr_main_sc_est1}}{\le} \frac{\frac{(k+4)^{1-2\gamma}}{2^{1-2\gamma} (1-2\gamma)}  }{\frac{1}{1-\gamma}[(k+1)^{1-\gamma} - (\frac{k}{2}+1)^{1-\gamma}]\ln{\frac{1}{\theta_0}}} \\
& = \frac{\frac{(k+4)^{1-2\gamma}}{2^{1-2\gamma }
(1-2\gamma)}}{\frac{(k+2)^{1-\gamma}}{1-\gamma}[(\frac{2}{3})^{1-\gamma}
- (\frac{1}{2})^{1-\gamma}]\ln{\frac{1}{\theta_0}}}
 = \frac{1-\gamma}{1-2\gamma}\frac{2^{\gamma}(k+4)^{-\gamma}}{[(\frac{2}{3})^{1-\gamma} - (\frac{1}{2})^{1-\gamma}]\ln{\frac{1}{\theta_0}}} \approx \mathcal{O}\left( \frac{1}{k^{\gamma}}\right).
\end{align*}
Therefore, in this case, the overall rate will be given by:
$$r_{k+1}^2 \le \theta_0^{\mathcal{O}(k^{1-\gamma})}r_0^2 + \mathcal{O}\left( \frac{1}{k^{\gamma}}\right) \approx\mathcal{O}\left( \frac{1}{k^{\gamma}}\right) .$$
If $\gamma = \frac{1}{2}$, then the definition of
$\varphi_{1-2\gamma}(\frac{k}{2}+2)$  provides that:
$$r_{k+1}^2 \le \theta_0^{\mathcal{O}(\sqrt{k})}r_0^2 + \theta_0^{\mathcal{O}(\sqrt{k})}\mathcal{O}(\ln{k}) +  \mathcal{O}\left( \frac{1}{\sqrt{k}}\right) \approx\mathcal{O}\left( \frac{1}{\sqrt{k}}\right) .$$
When $\gamma \in (\frac{1}{2}, 1)$, it is obvious that
$\varphi_{1-2\gamma}\left(\frac{k}{2}+2\right) \le \frac{1}{2\gamma
- 1}$ and therefore the order of the convergence rate  changes into:
$$r_{k+1}^2 \le \theta_0^{\mathcal{O}(k^{1-\gamma})}[r_0^2 + \mathcal{O}(1)] + \mathcal{O}\left( \frac{1}{k^{\gamma}}\right) \approx\mathcal{O}\left( \frac{1}{k^{\gamma}}\right).$$
Lastly, if $\gamma = 1$, by using $\theta_0^{\ln{k+1}} \le
\left(\frac{1}{k}\right)^{\ln{\frac{1}{\theta_0}}}$ we obtain the
second part of our result.
\end{proof}

\noindent Notice that the above results state that our SPP algorithm
with variable stepsize $\frac{\mu_0}{k^\gamma}$ converges with
$\mathcal{O}\left(\frac{1}{k^{\gamma}}\right)$ rate. Similar results
have been obtained in \cite{TouTra:16} for a particular objective
function of the form $f(A_S^T x)$ without any constraints and for
 $\gamma \in (1/2,1]$. Moreover, for $\gamma =1$ similar convergence rate, but in
asymptotic fashion and for unconstrained problems, has been derived
in \cite{RyuBoy:16}.  As we have already mentioned in the
introduction section, the convergence rate for the SGD scheme
contains an exponential term of the form $\frac{e^{C_2 \mu^2_0}}{k^{\alpha \mu_0}}$, which for a given iteration counter $k$ grows
exponentially in the initial stepsize $\mu_0$, see \cite{MouBac:11}.
Thus, although the SGD method achieves a  rate $O(\frac{1}{k})$ for
a variable stepsize $\frac{\mu_0}{k}$, if $\mu_0$ is chosen too
large, then it can induce catastrophic effects in the convergence
rate. However, one should notice that for our SPP method, Theorem
\ref{th_main_sc} does not contain this kind of exponential term,
therefore SPP is more robust than SGD scheme even in the constrained case. This can be also observed in numerical simulations, see Section \ref{sec_numerics} below.  Clearly, Corollary \ref{corr_main_sc} directly implies the following complexity estimates for attaining a suboptimal point $x^k$ satisfying $\mathbb{E}[\norm{ x^{k} - x^*}^2] \le \epsilon$.

\begin{corrolary}
\label{corr_main_sc_eps}
Under the assumptions of Theorem \ref{th_main_sc} and $\epsilon > 0$ the following estimates hold.
For $\gamma \in (0,1)$, if we perform:
\begin{align*}
\left\lceil \mathcal{O}\left(\frac{1}{\epsilon^{1/\gamma}}\right) \right\rceil
\end{align*}
iterations of SPP scheme with variable stepsize, then the sequence $\{x^k\}_{k \ge 0}$ satisfies $\mathbb{E}[\norm{ x^{k} - x^*}^2] \le \epsilon$.
Moreover, for $\gamma = 1$ and  $\theta_0 < \frac{1}{e}$, if we perform:
\begin{align*}
\left\lceil \mathcal{O}\left(\frac{1}{\epsilon}\right) \right\rceil
\end{align*}
iterations of SPP scheme with variable stepsize, then we have $\mathbb{E}[\norm{ x^{k} - x^*}^2] \le \epsilon$.
\end{corrolary}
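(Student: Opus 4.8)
The plan is to read off the claim as a direct inversion of the rates already isolated in Corollary~\ref{corr_main_sc}, so that no new estimation is needed---only solving an inequality of the form $(\text{rate})(k) \le \epsilon$ for $k$. I would treat Corollary~\ref{corr_main_sc} as a black box providing, in each regime, a clean bound of the type $\mathbb{E}[\norm{x^k - x^*}^2] \le C/k^{\gamma}$ with $C>0$ collecting all the problem-dependent constants (built from $\mathcal{D}$, $\mu_0$, $\kappa$, $\theta_0$, $r_0$, etc.) that multiply the dominant polynomial term.

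First, for $\gamma \in (0,1)$, Corollary~\ref{corr_main_sc}$(i)$ guarantees $\mathbb{E}[\norm{x^k - x^*}^2] \le C/k^{\gamma}$ for all sufficiently large $k$. I would then simply require $C/k^{\gamma} \le \epsilon$, i.e. $k \ge (C/\epsilon)^{1/\gamma}$, which yields the announced iteration count $\lceil \mathcal{O}(1/\epsilon^{1/\gamma}) \rceil$. Second, for $\gamma = 1$ and $\theta_0 < 1/e$, Corollary~\ref{corr_main_sc}$(ii)$ gives $\mathbb{E}[\norm{x^k - x^*}^2] \le C/k$ with $C$ absorbing the constant $2\mu_0^2/(\ln(1/\theta_0)-1)$; note that $\ln(1/\theta_0) > 1$ precisely because $\theta_0 < 1/e$, so this constant is finite and positive. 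Imposing $C/k \le \epsilon$ gives $k \ge C/\epsilon = \mathcal{O}(1/\epsilon)$, matching the stated $\lceil \mathcal{O}(1/\epsilon) \rceil$.

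The only point requiring a touch of care---and the reason the argument is not entirely mechanical---is to justify that the fast-decaying terms appearing in the full bounds of Theorem~\ref{th_main_sc} do not enlarge the complexity, so that the clean $C/k^{\gamma}$ form is legitimate. Concretely, in case $(i)$ one must check that $\theta_0^{\varphi_{1-\gamma}(k)} r_0^2$ and the mixed term $\mathcal{D}\theta_0^{\varphi_{1-\gamma}(k) - \varphi_{1-\gamma}((k+1)/2)}\mu_0^2[\varphi_{1-2\gamma}((k+1)/2)+2]$ are dominated by the $1/k^{\gamma}$ term, and in case $(ii)$ that $\theta_0^{\varphi_0(k)} r_0^2$ is dominated by $1/k$.

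This last verification, however, is already contained in the proof of Corollary~\ref{corr_main_sc}: in case $(i)$ the factor $\theta_0^{\varphi_{1-\gamma}(k)}$ decays like $\theta_0^{\mathcal{O}(k^{1-\gamma})}$, faster than any inverse polynomial, while the mixed term was shown there to be itself $\mathcal{O}(1/k^{\gamma})$; in case $(ii)$, $\theta_0^{\varphi_0(k)} = k^{-\ln(1/\theta_0)}$ with $\ln(1/\theta_0) > 1$, hence it decays strictly faster than $1/k$. In every instance these terms are therefore absorbed into the constant $C$ without altering the exponent of $\epsilon$, and I anticipate no genuine obstacle---the corollary is essentially a restatement of Corollary~\ref{corr_main_sc} in terms of the number of iterations sufficient to reach accuracy $\epsilon$.
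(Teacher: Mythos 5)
Your proposal is correct and follows exactly the paper's route: the paper proves this corollary in one line by inverting the $\mathcal{O}(1/k^{\gamma})$ (resp.\ $\mathcal{O}(1/k)$) bounds of Corollary~\ref{corr_main_sc}, which is precisely what you do, and your extra check that the fast-decaying terms are absorbed into the constant is already contained in the proof of that corollary.
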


\begin{proof}
 The proof follows immediately  from Corollary \ref{corr_main_sc}.
\end{proof}

%%%%%%%%%%%%%%%%%%%%%%%%%%%%%%%%%%%%%%%%%%%%%%%%%%%%%%%%%%%%%%%%%%%%%%%
%%%%%%%%%%%%%%%%%%%%%%%%%%%%%%%%%%%%%%%%%%%%%%%%%%%%%%%%%%%%%%%%%%%%%%%

\section{A restarted variant of Stochastic Proximal Point algorithm}
From previous section we easily notice  that an $\mathcal{O}\left( \frac{1}{\epsilon}\right)$ convergence rate is obtained
for the SPP algorithm with variable stepsize $\mu_k = \frac{\mu_0}{k}$
only when the initial stepsize $\mu_0$ is chosen sufficiently large such that $\theta_0 < \frac{1}{\sqrt{e}}$. However, this condition is not easy to check. Therefore, if $\mu_0$ is not chosen adequately, we can encounter the case  $\theta_0 > \frac{1}{\sqrt{e}}$, which leads to a convergence rate for the SPP scheme  of order
$\mathcal{O}\left(\epsilon^{-\frac{1}{2\ln{(1/\theta_0)}}}\right)$, that  is implicitly dependent on the choice of the initial stepsize $\mu_0$. In conclusion,
in order to remove this dependence on the initial stepsize of  the simple SPP scheme, we develop a restarting variant of it. This variant consists of running the SPP
algorithm (as a routine) for multiple times (epochs) and restarting it each time after a certain number of iterations. In each epoch $t$, the SPP scheme  runs for an estimated number of iterations $K_t$, which may vary over the epochs,
depending on the assumptions made on the objective function. More explicitly,  the Restarted Stochastic Proximal Point (RSPP) scheme has the following iteration:

\begin{center}
\framebox{
\parbox{13 cm}{
\begin{center}
\textbf{ Algorithm {\bf RSPP} }
\end{center}
{% Given $x^0 \in \rset^n, \{\mu_k\}_{k \ge 0} \subset (0,\infty)$.
Let $\mu_0 > 0$ and $x^{0,0} \in \rset^n$. For $t\geq 1$ do:}
\begin{enumerate}
\item Compute stepsize  $\mu_t$ and number of inner iterations  $K_t$
\item Set $x^{K_t,t}$ the average output of SPP$(x^{K_{t-1},t-1},\mu_{t})$ runned for $K_{t}$ iterations with constant stepsize $\mu_t$
\item If an outer stopping criterion is satisfied, then \textbf{STOP},
otherwise $t:=t+1$ and go to step $1$.
\end{enumerate}
}}
\end{center}

\noindent We  analyze below the nonasymptotic convergence rates of
the RSPP algorithm  under different assumptions  on the objective
function: first we assume Assumptions \ref{assump_strongconvex} and
\ref{assump_lipgrad} to hold for the objective functions, and then
we assume that the objective functions are polyhedral and thus
satisfy a  sharp minima like condition.

%%%%%%%%%%%%%%%%%%%%%%%%%%%%%%%%%%%%%%%%%%%%%%%%%%%%%%%%%%%%%%%%%%%%%%%%%%%%5

\subsection{Nonasymptotic sublinear convergence of algorithm RSPP}
\noindent In this section we analyze the convergence rate of the
sequence generated by the RSPP scheme, which repeatedly calls the
subroutine SPP with a constant stepsize, in multiple epochs. We
consider that SPP runs in epoch $t \ge 1$ with the constant stepsize
$\mu_t$ for $K_t$ iterations. As in  previous sections, we first
provide a descent lemma for the feasibility gap.
For simplicity, we keep the notations of $\mathcal{A}$ from Lemma \ref{lemma_gradbound}
and $\mathcal{B}$ from Lemma \ref{lemma_feas}.

\begin{lemma}
\label{lemma_feas_rspp}
Let Assumptions \ref{assump_feasset},
\ref{assump_strongconvex} and \ref{assump_lipgrad} hold. Also let
the  sequence $\{x^{K_t,t}\}_{t \ge 0}$ be generated by RSPP scheme
with nonincreasing stepsizes $\{\mu_t\}_{t \ge 0}$ and nondecreasing
epoch lengths $\{K_t\}_{t \ge 1}$ such that $K_t \ge 1$ for all $t
\ge 1$. Then, the following relation holds:
\begin{align*}
 \sqrt{\mathbb{E}[\text{dist}_X^2(x^{K_t,t})]} \le \left(1-\frac{1}{\kappa}\right)^{\sum_{i=1}^t \frac{K_i}{2}} \text{dist}_X(x^{0,0}) +
 2\left(1 - \frac{1}{\kappa}\right)^{\sum\limits_{i=t-\lceil\frac{t}{2}\rceil}^t \frac{K_i}{2}} \mu_0\kappa^2 \mathcal{B}
+ 2\mu_{t-\lceil\frac{t}{2}\rceil}\kappa^2 \mathcal{B}.
\end{align*}
\end{lemma}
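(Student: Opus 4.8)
The plan is to reduce the multi-epoch analysis to the single-run feasibility descent already established inside the proof of Lemma~\ref{lemma_feas}, and then chain the resulting contraction across epochs. First I would observe that the concatenation of all SPP iterates produced over epochs $1,\dots,t$ is itself an SPP sequence driven by a nonincreasing stepsize sequence (constant $\mu_i$ within epoch $i$, with $\mu_i$ nonincreasing in $i$). Hence Lemma~\ref{lemma_varstepbounded} bounds every iterate by $\mathcal{A}$, so Lemma~\ref{lemma_gradbound} and the feasibility estimate \eqref{lemma_feas_rel1} hold throughout with the \emph{same} global constant $\mathcal{B}$. Combining the contraction \eqref{lemma_feas_feasdecrease} with \eqref{lemma_feas_rel1}, specialized to the constant stepsize $\mu_i$, gives the one-step recursion $d_{k+1}\le\alpha d_k+\alpha\mu_i\mathcal{B}$, where $\alpha=\sqrt{1-1/\kappa}$ and $d_k$ is the root-mean-square feasibility distance of the $k$-th within-epoch iterate.

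Iterating this recursion $K_i$ times over epoch $i$ and writing $D_i=\sqrt{\mathbb{E}[\text{dist}_X^2(x^{K_i,i})]}$ yields the cross-epoch contraction
\begin{equation*}
D_i\le\alpha^{K_i}D_{i-1}+\frac{\alpha(1-\alpha^{K_i})}{1-\alpha}\mu_i\mathcal{B}\le\alpha^{K_i}D_{i-1}+\frac{\alpha}{1-\alpha}\mu_i\mathcal{B},
\end{equation*}
with $D_0=\text{dist}_X(x^{0,0})$. Unrolling over $i=1,\dots,t$ gives
\begin{equation*}
D_t\le\alpha^{\sum_{i=1}^tK_i}\text{dist}_X(x^{0,0})+\frac{\alpha}{1-\alpha}\mathcal{B}\sum_{i=1}^t\alpha^{\sum_{j=i+1}^tK_j}\mu_i,
\end{equation*}
and since $\alpha^{\sum_{i=1}^tK_i}=(1-1/\kappa)^{\sum_{i=1}^tK_i/2}$ the first term already matches the leading term of the claim.

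The crux is to estimate the weighted geometric sum $\Sigma=\sum_{i=1}^t\alpha^{\sum_{j=i+1}^tK_j}\mu_i$, and here I would mirror the splitting used at the end of Lemma~\ref{lemma_feas}. Setting $p=t-\lceil t/2\rceil$, I split $\Sigma$ into the early block $i=1,\dots,p-1$ and the late block $i=p,\dots,t$. In the early block every exponent contains the common tail $\sum_{j=p}^tK_j$, so I factor out $\alpha^{\sum_{j=p}^tK_j}$, bound $\mu_i\le\mu_0$, and use $K_j\ge1$ to collapse $\sum_{i=1}^{p-1}\alpha^{\sum_{j=i+1}^{p-1}K_j}\le\sum_{i=1}^{p-1}\alpha^{(p-1)-i}\le\frac{1}{1-\alpha}$; this contributes at most $\frac{\mu_0}{1-\alpha}\alpha^{\sum_{j=p}^tK_j}$. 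In the late block I bound $\mu_i\le\mu_p=\mu_{t-\lceil t/2\rceil}$ and again use $K_j\ge1$ to get $\sum_{i=p}^t\alpha^{\sum_{j=i+1}^tK_j}\le\sum_{i=p}^t\alpha^{t-i}\le\frac{1}{1-\alpha}$, contributing at most $\frac{\mu_{t-\lceil t/2\rceil}}{1-\alpha}$. Multiplying $\Sigma\le\frac{1}{1-\alpha}\big(\mu_0\alpha^{\sum_{j=p}^tK_j}+\mu_{t-\lceil t/2\rceil}\big)$ by the prefactor $\frac{\alpha}{1-\alpha}\mathcal{B}$ and using the elementary estimate $\frac{1}{1-\alpha}\le2\kappa$ (so that $\frac{\alpha}{(1-\alpha)^2}=\kappa^2\alpha(1+\alpha)^2=\mathcal{O}(\kappa^2)$) reproduces the two remaining terms $2(1-1/\kappa)^{\sum_{i=p}^tK_i/2}\mu_0\kappa^2\mathcal{B}$ and $2\mu_{t-\lceil t/2\rceil}\kappa^2\mathcal{B}$.

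The main obstacle I anticipate is precisely this splitting step: the index $p=t-\lceil t/2\rceil$ must be chosen so that the early block decays exponentially in the \emph{tail} $\sum_{j=p}^tK_j$ of the epoch lengths while the late block leaves behind exactly the stepsize $\mu_{t-\lceil t/2\rceil}$, and both the monotonicity of $\{\mu_i\}$ and the lower bound $K_i\ge1$ are needed to collapse the double sums into single geometric series. A secondary technical point requiring care is the reuse, inside every epoch, of the global constant $\mathcal{B}$ derived from the uniform boundedness $\mathcal{A}$; this is legitimate because the concatenated sequence has nonincreasing stepsizes dominated by $\mu_0$. Finally, if $x^{K_t,t}$ denotes the \emph{average} epoch output rather than the last iterate, convexity of $\text{dist}_X^2$ together with Jensen's inequality lets the averaged output inherit the same bound.
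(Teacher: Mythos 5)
Your proposal is correct and follows essentially the same route as the paper's proof: the same within-epoch recursion $d_{k+1}\le\alpha d_k+\alpha\mu_t\mathcal{B}$ inherited from Lemma \ref{lemma_feas}, the same cross-epoch unrolling, the same split of the weighted geometric sum at $t-\lceil t/2\rceil$ using $K_i\ge 1$ and monotonicity of $\{\mu_t\}$, and the same final bound $\frac{\alpha}{1-\alpha}\le 2\kappa$. Your closing remarks on reusing the global constant $\mathcal{B}$ across the concatenated sequence and on handling the averaged epoch output via Jensen's inequality are points the paper leaves implicit, and they make the argument slightly more careful than the original.
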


\begin{proof}
The proof follows similar lines with the one of Lemma \ref{lemma_feas_feasdecrease}.
Therefore, by using notations: $\alpha = \sqrt{1- \frac{1}{\kappa}}$ and $d_{k,t} = \sqrt{\mathbb{E}[\text{dist}_X^2(x^{k,t})]}$ results:
% \mathcal{B} = \sqrt{2\mathbb{E}[\norm{\nabla f (x^*;S)}^2]} + D \sqrt{2\mathbb{E}[L^2_{f,S}]}$ and $m = \lceil \frac{k+1}{2} \rceil$.
\begin{align*}
 d_{k+1,t}
& \le \alpha d_{k,t} + \alpha \mu_t \mathcal{B}  \le \alpha^{k+1} d_{0,t} + \mu_t \mathcal{B} \sum\limits_{i=1}^{k+1} \alpha^i \\
& \le \alpha^{k+1} d_{0,t} + \mu_t \mathcal{B} \frac{\alpha}{1-\alpha}.
\end{align*}
By setting $k = K_t-1$, then the last inequality implies:
\begin{align*}
 d_{K_t,t}
 &\le \alpha^{K_t} d_{K_{t-1},t-1} + \mu_t \mathcal{B} \frac{\alpha}{1-\alpha}\\
&\le \alpha^{\sum_{i=1}^t K_i} d_{0,0} +
\mathcal{B} \frac{\alpha}{1-\alpha} \sum\limits_{j=0}^{t-1} \alpha^{\sum_{i=t-j+1}^t K_i} \mu_{t-j}.
\end{align*}
Now set $m = \lceil \frac{t}{2} \rceil$.
By dividing the sum from the right side of \eqref{lemma_feas_prelimrate} in two parts,
by taking into account that $\{\mu_t\}_{t \ge 0}$ is nonincreasing and $\{K_t\}_{t \ge 0}$ is nondecreasing,
then results:
\begin{align*}
\sum\limits_{j=0}^{t-1} \alpha^{\sum_{i=t-j+1}^t K_i} \mu_{t-j}
& = \sum\limits_{j=0}^{m} \alpha^{\sum_{i=t-j+1}^t K_i} \mu_{t-j}  + \sum\limits_{j=m+1}^{t-1} \alpha^{\sum_{i=t-j+1}^t K_i} \mu_{t-j} \nonumber \\
& \le \mu_{t-m} \sum\limits_{j=0}^{m} \alpha^{\sum_{i=t-j+1}^t K_i}   + \mu_0 \alpha^{K_t}\sum\limits_{j=m+1}^{t-1} \alpha^{\sum_{i=t-j+1}^{t-1} K_i} \nonumber \\
& \le \mu_{t-m} \frac{1 - \alpha^{m+1}}{1-\alpha}   + \mu_0\alpha^{\sum_{i=t-m}^t K_i} \frac{1- \alpha^{t-m+2}}{1-\alpha} \nonumber \\
& \le  \frac{\mu_{t-m}}{1-\alpha}   +  \frac{\mu_0\alpha^{\sum_{i=t-m}^t K_i}}{1-\alpha}. \nonumber
\end{align*}
By using the last inequality into \eqref{lemma_feas_prelimrate} and using the bound $\frac{\alpha}{1-\alpha} \le 2\kappa $, then these
facts imply the statement of the lemma.
\end{proof}

\noindent Next, we provide the non-asymptotic bounds on the iteration complexity of RSPP scheme.

\begin{theorem}\label{th_rspp_const}
Let Assumptions \ref{assump_feasset}, \ref{assump_strongconvex} and
\ref{assump_lipgrad} hold and $\epsilon, \mu_0 > 0$. Also let
$\gamma > 0$ and $\{x^{K_t,t}\}_{ t \ge 0}$ be generated by RSPP
scheme with $\mu_t = \frac{\mu_0}{t^{\gamma}}$ and $K_t = \lceil t^{\gamma} \rceil$. If we perform the following
number of epochs:
$$ T = \left\lceil \max\left\{ \ln{\left(\frac{2r_{0,0}^2}{\epsilon}\right)}\frac{1}{\ln{(1/\theta_0)}}, \left( \frac{2^{\gamma + 1} \mathcal{D}_r}{\epsilon}\mathcal{C}\right)^{1/\gamma}  \right\} \right \rceil,$$
then after a total number of SPP iterations of $\frac{T^{1+\gamma}}{1+\gamma}$, which is bounded by
\begin{align*}
\left\lceil \frac{1}{1 + \gamma}\max\left\{
\ln{\left(\frac{2r_{0,0}^2}{\epsilon}\right)}^{1+ \gamma}
\frac{1}{\ln{(1/\theta_0)}^{1+ \gamma}}, \left( \frac{2^{\gamma + 1}
\mathcal{D}_r}{\epsilon}\mathcal{C}\right)^{1 + \frac{1}{\gamma}}
\right\} \right\rceil,
\end{align*}
where $\mathcal{D}_r = 4\norm{\nabla
F(x^*)}\left[\frac{\text{dist}_X(x^{0,0})+2\mu_0\kappa^2
\mathcal{B}}{\mu_0\ln\left(\kappa/(\kappa -1)\right)} +
3^{\gamma}\mathcal{B}\kappa^2\right] +  2\eta \sqrt{2\eta^2 + 2\mathbb{E}[L_{f,S}^2]\mathcal{A}^2} + 2\eta \mathcal{A} \sqrt{\mathbb{E}[L_{f,S}^2]}$
and  $\mathcal{C} = \frac{1}{2(1-\gamma) \ln{1/\sqrt{\theta_0}}} +
\frac{\mu_1^2}{(1-\theta_0)^2}$,  we have
$\mathbb{E}[\norm{x^{K_T,T} - x^*}^2] \le \epsilon$.
\end{theorem}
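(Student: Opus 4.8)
The plan is to collapse the two nested loops of RSPP into a single scalar recurrence in the per-epoch quantities $r_{K_t,t}^2=\mathbb{E}[\|x^{K_t,t}-x^*\|^2]$ and then unroll it over the epoch counter. First I would fix an epoch $t$ and rerun the one-step estimate obtained in the proof of Theorem \ref{th_main_sc}, but now with the \emph{constant} stepsize $\mu_t$: since $\mu_t$ does not change inside the epoch, the factor $\mathbb{E}[\theta_S^2(\mu_t)]=:\bar\theta_t$ is constant, and feeding the across-epoch feasibility descent of Lemma \ref{lemma_feas_rspp} (this is precisely why $\mathcal{D}_r$ carries $\kappa^2$ instead of the $\kappa$ of $\mathcal{D}$) into the bound on $\mathbb{E}[\langle \nabla f(x^*;S),x^*-x^{k,t}\rangle]$ yields the within-epoch recurrence $r_{k+1,t}^2\le \bar\theta_t\, r_{k,t}^2+\mu_t^2\mathcal{D}_r$. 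Summing the geometric series over $k=0,\dots,K_t-1$ and using $r_{0,t}=r_{K_{t-1},t-1}$ gives
\[
r_{K_t,t}^2\le \bar\theta_t^{\,K_t}\,r_{K_{t-1},t-1}^2+\frac{\mu_t^2\mathcal{D}_r}{1-\bar\theta_t}.
\]

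The crucial observation, and the whole reason restarting helps, is that $\bar\theta_t^{\,K_t}\le\theta_0$ \emph{uniformly} in $t$. Indeed inequality \eqref{upbound_alphak} gives $\bar\theta_t=\mathbb{E}[\theta_S^2(\mu_t)]\le\theta_0^{1/t^{\gamma}}$, and the choice $K_t=\lceil t^{\gamma}\rceil\ge t^{\gamma}$ yields $\bar\theta_t^{\,K_t}\le\theta_0^{\,K_t/t^{\gamma}}\le\theta_0<1$. Thus every epoch contracts the squared distance by at least the fixed factor $\theta_0$, and unrolling the recurrence over $t=1,\dots,T$ produces
\[
r_{K_T,T}^2\le \theta_0^{\,T}\,r_{0,0}^2+\mathcal{D}_r\sum_{t=1}^{T}\theta_0^{\,T-t}\,\frac{\mu_t^2}{1-\bar\theta_t}.
\]

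The technical heart is to bound the weighted noise sum by $2^{\gamma}\mathcal{C}/T^{\gamma}$. I would split it at $m=\lceil T/2\rceil$. For the recent epochs $t>m$ the geometric weight is $O(1)$, but the stepsize is tiny: using that $t\mapsto \mu_t/(1-\bar\theta_t)$ is nonincreasing (the monotonicity already exploited in Lemma \ref{lemma_varstepbounded}), that $\mu_t\le\mu_m\le(2/T)^{\gamma}\mu_0$, and that $\sum_{t>m}\theta_0^{\,T-t}\le 1/(1-\theta_0)$, this block contributes the $\tfrac{\mu_1^2}{(1-\theta_0)^2}$ summand of $\mathcal{C}$ together with the overall $T^{-\gamma}$ factor. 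For the old epochs $t\le m$ the weight $\theta_0^{\,T-t}\le(\sqrt{\theta_0})^{T}$ decays exponentially, and after summing the $O(t^{-\gamma})$ noise contributions by an integral comparison ($\sum t^{-\gamma}\le\varphi_{1-\gamma}$, as in Lemma \ref{lemma_main}), the product $(\sqrt{\theta_0})^{T}\varphi_{1-\gamma}(T/2)$ is dominated by $T^{-\gamma}$ through the elementary bound $e^{-x}\le 1/x$; this is exactly the mechanism producing the $\tfrac{1}{2(1-\gamma)\ln(1/\sqrt{\theta_0})}$ summand of $\mathcal{C}$.

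Finally I would split the budget $\epsilon$ into two halves. Requiring $\theta_0^{\,T}r_{0,0}^2\le\epsilon/2$ is equivalent to $T\ge \ln(2r_{0,0}^2/\epsilon)/\ln(1/\theta_0)$, the first entry of the maximum defining $T$, while requiring $\mathcal{D}_r\cdot 2^{\gamma}\mathcal{C}/T^{\gamma}\le\epsilon/2$ is equivalent to $T\ge(2^{\gamma+1}\mathcal{D}_r\mathcal{C}/\epsilon)^{1/\gamma}$, the second entry; taking $T$ to be the ceiling of their maximum therefore forces $r_{K_T,T}^2\le\epsilon$. The stated complexity then follows because the total number of inner SPP steps is $\sum_{t=1}^{T}K_t=\sum_{t=1}^{T}\lceil t^{\gamma}\rceil$, of order $T^{1+\gamma}/(1+\gamma)$; since $s\mapsto s^{1+\gamma}$ commutes with the maximum, substituting $T=\lceil\max\{\cdot,\cdot\}\rceil$ gives the displayed bound. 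I expect the noise-sum estimate, namely the split at $T/2$ that must reproduce both summands of $\mathcal{C}$ with the correct $T^{-\gamma}$ rate, to be the main obstacle, together with the bookkeeping needed to push the across-epoch feasibility bound of Lemma \ref{lemma_feas_rspp} cleanly through the per-epoch inner-product term.
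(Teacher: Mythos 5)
Your proposal is correct and follows essentially the same route as the paper: the same per-epoch recurrence $r_{K_t,t}^2\le\bar\theta_t^{K_t}r_{K_{t-1},t-1}^2+\mu_t^2\mathcal{D}_r/(1-\bar\theta_t)$ with the uniform contraction $\bar\theta_t^{K_t}\le\theta_0$ via Bernoulli, the same split of the weighted noise sum at $\lceil T/2\rceil$ producing the two summands of $\mathcal{C}$ (old epochs killed by $\theta_0^{T/2}\le 1/(1+\tfrac{T}{2}\ln(1/\theta_0))$ against the integral bound on $\sum\mu_i$, recent epochs by monotonicity of $\mu_i/(1-\theta_i)$ and the geometric tail), and the same $\epsilon/2$ budget split and $\sum K_t\approx T^{1+\gamma}/(1+\gamma)$ accounting. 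No substantive differences to report.
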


\begin{proof}
First notice that from $e^x \ge 1+ x$ for all $x \ge 0$, we have
$\left(1-\frac{1}{\kappa}\right)^{\sum_{i=1}^t \frac{K_i}{2}} \le \left(1-\frac{1}{\kappa}\right)^{\frac{K_t}{2}} \le \frac{2}{K_t \ln{(\kappa/\kappa -1)}}$
and
$\left(1 - \frac{1}{\kappa}\right)^{\sum\limits_{i=t-\lceil\frac{t}{2}\rceil}^t \frac{K_i}{2}} \le \left(1-\frac{1}{\kappa}\right)^{\frac{K_t}{2}} \le \frac{2}{K_t \ln{(\kappa/\kappa -1)}}$,
which imply that Lemma 2 becomes
\begin{align}\label{th_main_rspp_feasrel1}
 \sqrt{\mathbb{E}[\text{dist}_X^2(x^{K_t,t})]} \le \mu_t \frac{2\text{dist}_X(x^{0,0})}{\mu_0 \ln{(\kappa/\kappa -1)}} +
 \mu_t \frac{4\kappa^2 \mathcal{B}}{\ln{(\kappa/\kappa -1)}} + 2\mu_{t-\lceil\frac{t}{2}\rceil}\kappa^2 \mathcal{B}.
\end{align}
It can be seen that by combining \eqref{th_main_rspp_feasrel1}
with a similar argument as in Theorem \ref{th_main_sc} we obtain a similar descent as \eqref{upbound_rk}.
Therefore, let $k \ge 0$ and $x^{k,t}$ be the $k$th iterate from the $t$th epoch.
Then, by denoting $r_{k,t}^2 =\mathbb{E}[\norm{x^{k,t}-x^*}^2]$, results:
\begin{align*}
 r_{k+1,t}^2
& \le \mathbb{E}[\theta_{S}(\mu_{t})^2] r_{k,t}^2 + \mu_t^2 \mathcal{D}_r.
\end{align*}
Now taking $k = K_t$ results in:
\begin{align}
 r_{0,t+1}^2 = r_{K_t,t}^2
& \le r_{0,t}^2 \theta_t^{K_t} + \mathcal{D}_r  \mu_{t}^2 \sum\limits_{i=0}^k \theta_t^i  \le r_{0,t}^2 \theta_t^{K_t} + \frac{\mathcal{D}_r  \mu_{t}^2}{1-\theta_t}. \label{th_rspp_const_reccurence1}
\end{align}
Recalling that we chose $\mu_t = \frac{\mu_0}{t^{\gamma}}$ and $K_t = \lceil t^{\gamma} \rceil$, then Lemma \ref{lemma_bernoulli}  leads to:
\[ \theta_t^{K_t} \le \left(\mathbb{E}\left[ \frac{1}{(1 + \mu_0 \sigma_{f,S})^2}\right]\right)^{\frac{K_t}{t^{\gamma}}} \le \theta_0.  \]
Therefore, \eqref{th_rspp_const_reccurence1} leads to:
\begin{align}
r_{0,t+1}
& \overset{\eqref{th_rspp_const_reccurence1}}{\le} \theta_0 r_{0,t}^2 + \frac{\mathcal{D}_r  \mu_{t}^2}{1-\theta_t}  \le \theta_0^t r_{0,1}^2 + \mathcal{D}_r
 \sum\limits_{i=1}^t \frac{\mu_{i}^2 \theta_0^{t-i}}{1-\theta_i}.
\label{th_rspp_const_rel1}
\end{align}
Note that $\frac{\mu_{i}^2}{1-\theta_i}$ is nonincreasing in $i$. Then, if we  fix $m = \lceil \frac{t}{2}\rceil$, then the sum $
\sum\limits_{i=1}^t \frac{\mu_{i}^2 \theta_0^{t-i}}{1-\theta_i}$ can
be bounded as follows:
\begin{align}
 \sum\limits_{i=1}^t \frac{\mu_{i}^2 \theta_0^{t-i}}{1-\theta_i}
 & \le  \theta_0^m \sum\limits_{i=1}^m \frac{\mu_{i}^2}{1-\theta_i} + \sum\limits_{i=m}^t \frac{\mu_{i}^2 \theta_0^{t-i}}{1-\theta_i} \nonumber \\
 & \le  \theta_0^m \sum\limits_{i=1}^m \frac{\mu_{i}^2}{1-\theta_i} + \frac{\mu_{m}^2}{1-\theta_m} \sum\limits_{i=1}^{t-m} \theta_0^{i} \nonumber \\
 & \le  \frac{\theta_0^m\mu_{1}}{1-\theta_0} \left(\sum\limits_{i=1}^m \mu_i\right)  + \frac{\mu_{m}^2}{(1-\theta_m) (1- \theta_0)}  \nonumber \\
 & \le   \frac{\theta_0^m\mu_{1}}{1-\theta_0} \left(\sum\limits_{i=1}^m \mu_i\right)  + \mu_{m}\frac{\mu_{1}}{(1- \theta_0)^2}.  \label{th_rspp_const_rel2}
\end{align}
Taking into account that $\sum\limits_{i=1}^m \mu_i \le \int_1^m
\frac{1}{s^{\gamma}} ds \le \frac{2^{\gamma
-1}}{(1-\gamma)t^{\gamma-1}}$ and that $\theta_0^m \le \frac{1}{1 +
\frac{t}{2}\ln{\frac{1}{\theta_0}}}$, the previous relation
\eqref{th_rspp_const_rel2} implies:
\begin{align}
 \sum\limits_{i=1}^t \frac{\mu_{i}^2 \theta_0^{t-i}}{1-\theta_i} \le \left(\frac{2}{t}\right)^{\gamma}\left[\frac{1}{2(1-\gamma) \ln{1/\sqrt{\theta_0}}} + \frac{\mu_1^2}{(1-\theta_0)^2} \right].
\end{align}
By using this bound in relation \eqref{th_rspp_const_rel2}, then in order to obtain $r_{0,t+1}^2 \le \epsilon$ it is sufficient that the number of epochs $t$ to satisfy:
\begin{align}
 t \ge  \max\left\{ \ln{\left(\frac{2r_{0,0}^2}{\epsilon}\right)}\frac{1}{\ln{(1/\theta_0)}}, \left( \frac{2^{\gamma + 1} \mathcal{D}_r}{\epsilon}\mathcal{C}\right)^{1/\gamma}  \right\}.
\end{align}
Finally, the total number of SPP iterations performed by RSPP algorithm satisfies:
\begin{align*}
\sum\limits_{i=1}^t K_t & \ge \sum\limits_{i=1}^t i^\gamma
\ge \int_0^t s^{\gamma} ds = \frac{t^{1 + \gamma}}{1 + \gamma} \\
&\ge \frac{1}{1 + \gamma}\max\left\{\ln{\left(\frac{2r_{0,0}^2}{\epsilon}\right)}^{1+ \gamma} \frac{1}{\ln{(1/\theta_0)}^{1+ \gamma}}, \left( \frac{2^{\gamma + 1}
\mathcal{D}_r}{\epsilon}\mathcal{C}\right)^{1 + \frac{1}{\gamma}} \right\},
\end{align*}
which proves the statement of the theorem.
\end{proof}

\noindent In conclusion Theorem \ref{th_rspp_const} states that the RSPP algorithm
with the choices $(\mu_t, K_t) = \left(\frac{\mu_0}{t^{\gamma}},\frac{t^{\gamma}}{2}\right)$ requires $\mathcal{O}\left( \epsilon^{-\left(1 + \frac{1}{\gamma}\right)}\right)$ simple SPP iterations to reach an
$\epsilon$ optimal point. It is important to observe that this convergence rate is achieved when the stepsize and the epoch length are not dependent on any inaccessible constant,
making our restarting scheme easily implementable. Moreover, the parameter $\gamma$ can be chosen in $(0, \infty)$, i.e. our RSPP scheme allows also stepsizes $\frac{\mu_0}{t^{\gamma}}$, with $\gamma>1$.  By comparison, an $\mathcal{O}\left(\epsilon^{-1}\right)$ complexity is obtained for SPP with stepsize $\mu_k = \frac{\mu_0}{k}$ only when $\mu_0$ is
chosen sufficiently large such that $\theta_0 < \frac{1}{\sqrt{e}}$. However,  this condition is not easy to check. Moreover,  we  may fall in
the case when $\theta_0 > \frac{1}{\sqrt{e}}$, which leads to a
complexity of $\mathcal{O}\left(\epsilon^{-\frac{1}{2\ln{(1/\theta_0)}}}\right)$
of the variable stepsize  SPP scheme. Observe that the last convergence
rate is implicitly dependent on the constant $\mu_0$ and can be arbitrarily bad, while for $\gamma>1$ sufficiently large the RSPP  scheme achieves the optimal convergence rate $\mathcal{O}\left( \epsilon^{-1}\right)$.

%%%%%%%%%%%%%%%%%%%%%%%%%%%%%%%%%%%%%%%%%%%%%%%%%%%%%%%%%%%%%%%%%%%%%%5

\section{Contributions in the light of prior work} Notice that the
above results (see Theorem \ref{th_main_sc}) state that our SPP
algorithm with variable stepsize $\mu_k = \mu_0/k^\gamma$ and
$\gamma \in (0, 1]$ converges with
$\mathcal{O}\left(1/k^{\gamma}\right)$ rate for strongly convex
smooth constrained  optimization problems. When the objective
function is strongly convex and smooth and with a proper learning
rate for $\gamma=1$, our algorithm converges with $\mathcal{O}(1/k)$
rate, which is optimal for the stochastic methods for this problem
class. As we have already discussed in the introduction section, the
convergence rate for the SGD scheme contains an exponential term of
the form $e^{C \mu^2_0}/k^{\alpha \mu_0}$, which for a given
iteration counter $k$ grows exponentially in the initial stepsize
$\mu_0$, see \cite{MouBac:11}. Thus, although the SGD method
achieves a  rate $O({1}/{k})$ for a variable stepsize $\mu_k =
{\mu_0}/{k}$, if $\mu_0$ is chosen too large, then it can induce
catastrophic effects in the convergence rate. However, one should
notice that for our SPP method, Theorem \ref{th_main_sc} does not
contain this kind of exponential term, therefore SPP is more robust
than SGD scheme even in the constrained case. This can be also
observed in numerical simulations, see Section \ref{sec_numerics}
below.

\noindent Similar convergence results for SPP have been obtained in
\cite{TouTra:16} for a particular objective function of the form
$f(x;S)= \ell(A_S^T x)$ without any constraints and for $\gamma \in
(1/2,1]$. However, the analysis in \cite{TouTra:16} cannot be
trivially extended to the general convex objective functions and
complicated constraints, since for the proofs  it is essential that
each component of the objective function has the form $f(A^T_S x)$.
In our paper we consider general convex objective functions which
lack the previously discussed structure and also (in)finite number
of convex constraints. Moreover, for $\gamma =1$ similar convergence
rate, but in \textit{asymptotic} fashion and for unconstrained
problems, has been derived in \cite{RyuBoy:16}. Another paper
related to our work is \cite{Bia:16}, where the author also proposes
a SPP-like algorithm and  asymptotic convergence is established
without rates. To the best of our knowledge, our SPP method is the
first stochastic proximal point algorithm that can tackle
optimization problems with complicated constraints
\eqref{convexfeas_random}. Moreover, the convergence analysis is
non-trivial  and does not follow from the analysis corresponding to
the unconstrained settings.

\section{Numerical experiments}
\label{sec_numerics} We present numerical evidence to assess the
theoretical convergence guarantees of the SPP algorithm. We provide
two numerical examples:  constrained stochastic least-square with
random generated data \cite{MouBac:11,TouTra:16} and  Markowitz
portfolio optimization  using real data \cite{BroDau:09,YurVu:16}.
In all our figures the results are averaged over $30$ Monte-Carlo
simulations for an algorithm.

%%%%%%%%%%%%%%%%%%%%%%%%%%%%%%%%%%%%%%%%%%%%%%%%%%%%%%%%%%%%%%%%%%%%%%%%%

\subsection{Stochastic least-square problems using random data}
In this section we evaluate the practical performance of the SPP
schemes on finite large scale least-squares models. To do so, we
follow a simple normal (constrained) linear regression example from
\cite{MouBac:11,TouTra:16}. Let $m = 10^5$ be the number of
observations, and $n = 20$ be the number of features. Let $x^*$ be a
randomly a priori chosen ground truth. The random variable $S$ is
decomposed as $S = (a_i , b_i)$, where the feature vectors $a_1
,\cdots, a_m \approx \mathcal{N}_n (0, H)$ are i.i.d. normal random
variables, and $H$ is a randomly generated symmetric matrix with
eigenvalues $1/k$, for $k = 1, \cdots, n$. The outcome $b_i$ is
sampled from a normal distribution as $b_i | a_i \approx \mathcal{N}
(a_i^T x^*, 1)$, for $i = 1, \cdots, m$. Since the typical loss
function is defined as the elementary squared residual $(a_i^Tx -
b_i)^2$, which is not strongly convex, we consider batches of
residuals to form our loss functions, i.e we  consider $\ell(x, i )$
of two forms:
\[  \ell(x, i ) = \norm{A_{j(i):j(i)+n} x - b_{j(i):j(i)+n}}^2 \quad \text{or} \quad
 \ell(x, i ) = (a_i^Tx - b_i)^2, \]
where $a_i$ is the $i$th row of $A$ and  $A_{j(i):j(i)+n} \in
\rset^{n \times n}$ is a submatrix containing  $n$ rows of $A$ so
that the function $x \mapsto \norm{A_{j(i):j(i)+n} x -
b_{j(i):j(i)+n}}^2$ is strongly convex. In our tests we    used
${\verb round } \left(m/2n\right)$ batches of dimension $n$ and we
let the rest  as elementary residuals, thus having in total
$p=m/2+m/n$ loss functions. Additionally, we impose on the estimator
$x$ also $p$ linear inequality constraints $\{x \; | \; Cx \le d
\}$. This constraints can be found in many applications and they
come from physical constraints, see e.g. \cite{CenChe:12,RosVil:14}.
We choose randomly the matrix $C$ for the constraints and $d=C\cdot
x^* + [0 \; 0 \; 0 \; v^T]^T$, where $v \ge 0$ is a random vector of
appropriate dimension, i.e. three inequalities are active at the
solution $x^*$. Besides the SPP and RSPP algorithms analyzed in the
previous sections of our paper, we also implemented SGD and the
averaged variant of SPP algorithm (A-SPP), which has the same SPP
iteration, but outputs the average of iterates:  $\hat{x}^k =
({1}/{\sum_{i=1}^k \mu_i}) \sum_{i=1}^k \mu_ix_i$.

\noindent In Figure 1 we run algorithms SPP, RSPP, A-SPP and SGD for
two values of the initial stepsize: $\mu_0 = 0.5$ and $\mu_0 = 1$.
Each scheme  runs  for two stepsize exponents: $\gamma_1 = 1$ (left)
and $\gamma_2 = {1}/{2}$ (right). From Figure 1  we can asses one
conclusion of Theorem 17: that the best performance for SPP is
achieved for  stepsize exponent $\gamma = 1$. Moreover, we can
observe that algorithm RSPP has the fastest behavior, while the
averaged variant A-SPP is more robust to changes in the initial
stepsize $\mu_0$. The performance of SGD is much worse as exponent
$\gamma$ decreases and it is also sensitive to the learning rate
$\mu_0$. Notice that both tests are performed over $m$ iterations
(i.e. one pass through data).
\begin{figure}[h]\label{fig_all_comparison}
\begin{center}
\includegraphics[width=7.5cm,height=5cm]{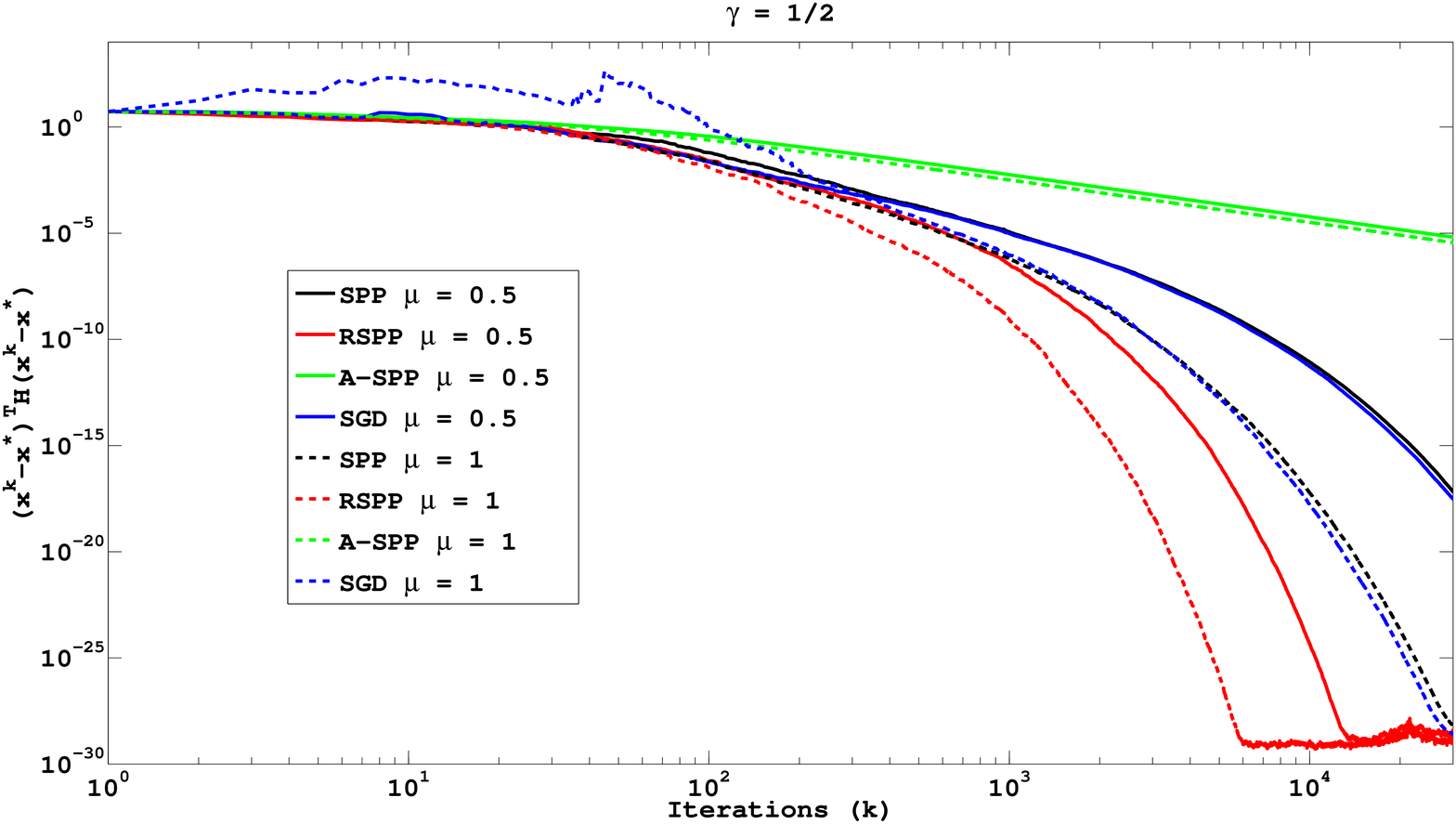}
\includegraphics[width=7.5cm,height=5cm]{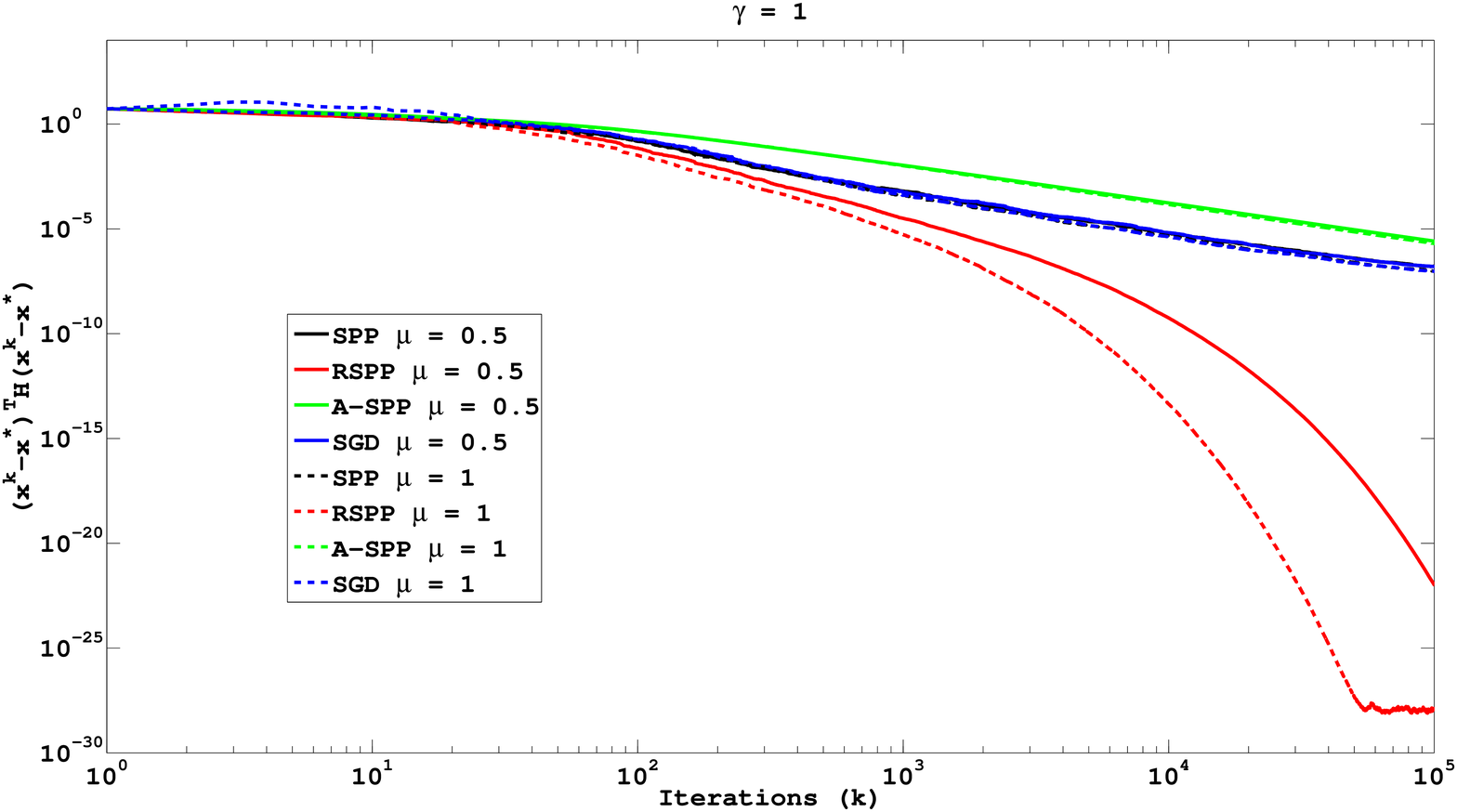}
\end{center}
\caption{Performance comparison of SPP, A-SPP, RSPP and SGD  for two
values of initial stepsize $\mu_0 \!=\! 0.5$ and $\mu_0 \!=\! 1$ and
for two values of exponent $\gamma\!=\!1/2$ (left) and
$\gamma\!=\!1$ (right).}
\end{figure}

\vspace{0.1cm}

\noindent  \noindent In the second set of experiments, we generate
random least-square  problems of the form $\min_{x: Cx \le d} \;
{1}/{2}\norm{Ax-b}^2$, where both matrices $A$ and $C$ have $m=10^3$
rows and generated randomly. Now, we do not impose the solution
$x^*$ to have the form given in the first test. We let SPP and RSPP
algorithms to do one pass through data for various stepsize
exponents $\gamma$. From Figure 2 we can assess the empirical
evidence of the ${\cal O}(1/\epsilon^{1/\gamma})$ convergence rate
of Theorem 17 for SPP and ${\cal O}(1/\epsilon^{1+ 1/\gamma})$
convergence rate of Theorem 21 for RSPP, by presenting squared
relative distance to the optimum solution. Moreover, the simulation
results match another conclusion of Theorems 17 and 21  regarding
the stepsize exponent $\gamma$: which state that the performance of
SPP/RSPP deteriorates with the decrease in the value of the stepsize
exponent.
\begin{figure}[ht]
\label{fig_comp_exponent}
\begin{center}
\includegraphics[width=7.5cm,height=5cm]{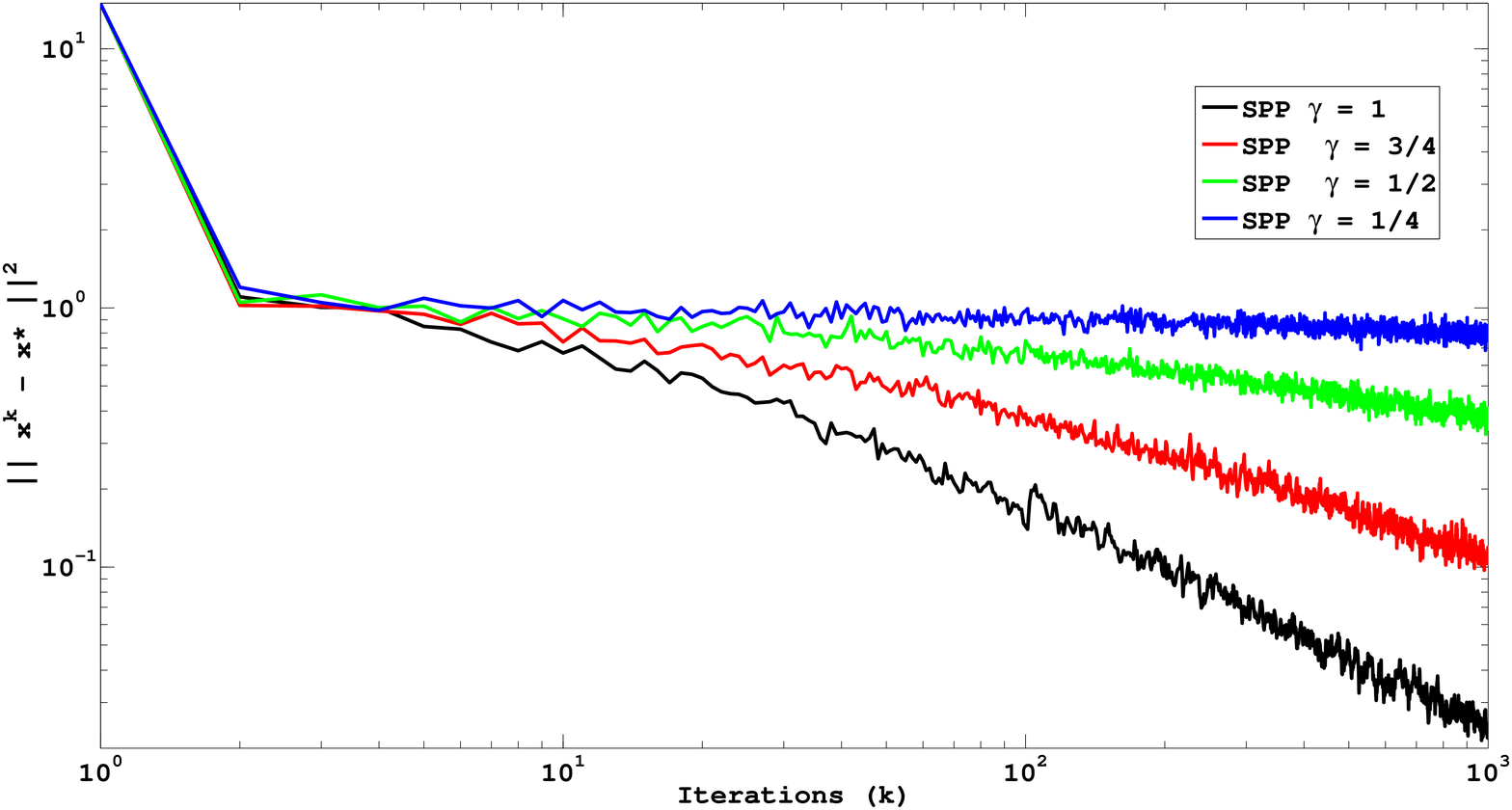}
\includegraphics[width=7.5cm,height=5cm]{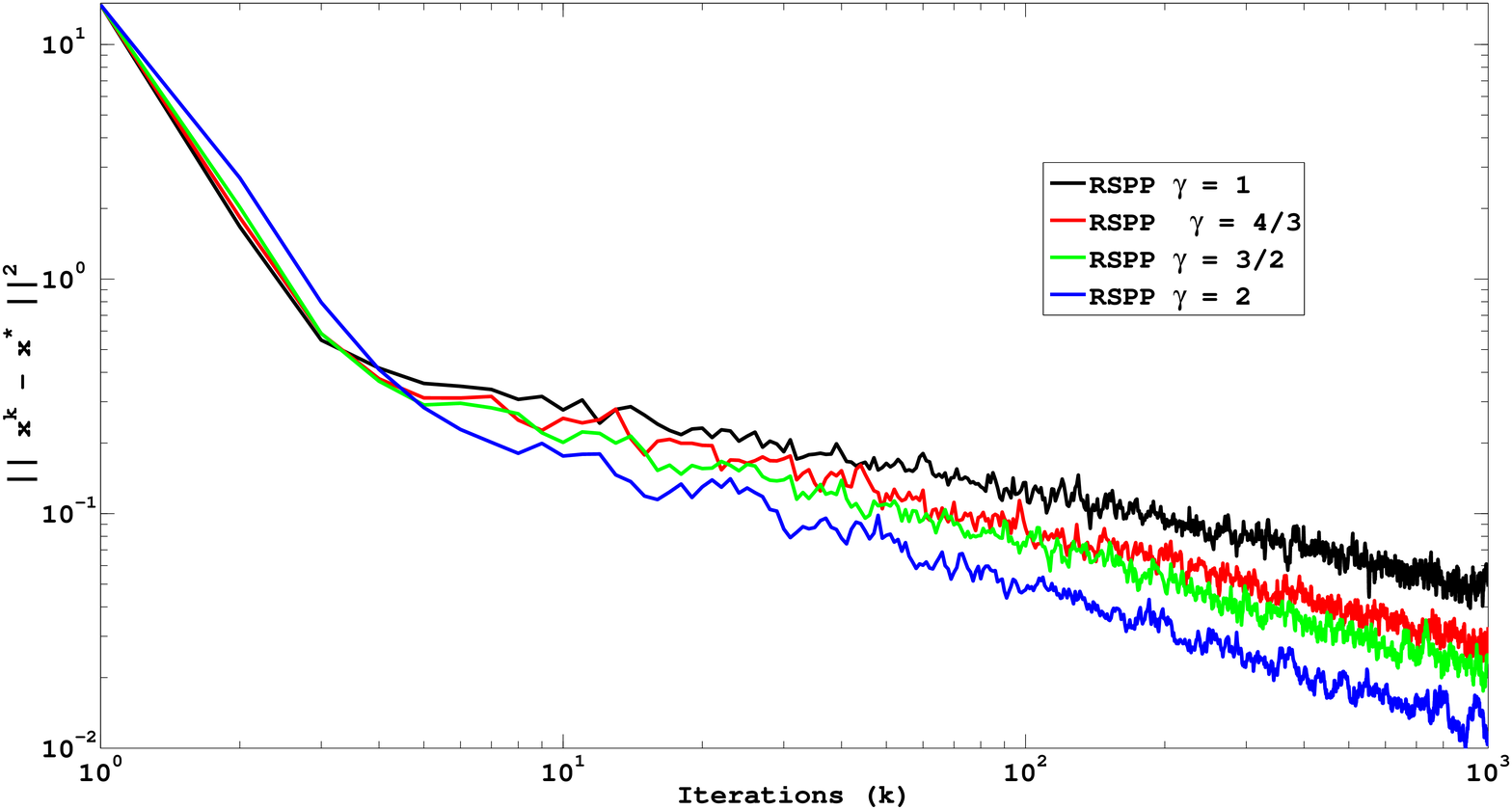}
\end{center}
\caption{Performance  of:  SPP  for four values of  the stepsize
exponent $\gamma=1, \; 3/4, \;  1/2$ and $1/4$ (left); RSPP  for
four values of  the stepsize exponent $\gamma=1, \; 4/3, \;  3/2$
and $2$ (right).}
\end{figure}

%%%%%%%%%%%%%%%%%%%%%%%%%%%%%%%%%%%%%%%%%%%%%%%%%%%%%%%%%%%%%%%%%%%%%%%%%%%%%%%%%%%%%

\subsection{Markowitz portfolio optimization  using real data}
Markowitz portfolio optimization aims to reduce the risk by
minimizing the variance for a given expected return. This can be
mathematically formulated as a convex optimization problem
\cite{BroDau:09,YurVu:16}:
\[ \min_{x \in \rset^n} \mathbb{E}[(a_S^Tx - b)^2]  \quad \text{s.t.} \quad
x \in X=\{x: \; x \geq 0, \; e^Tx \leq 1, \; a_{av}^T x \geq b \},
\] where $a_{av} = \mathbb{E}[a_S]$ is the average returns for each
asset that is assumed to be known (or estimated), and $b$ represents
a minimum desired return. Since new data points are arriving
on-line, one cannot access the entire dataset at any moment of time,
which makes the stochastic setting more favorable. For simulations,
we approximate the expectation with the empirical mean as follows:
\[ \min_{x \in \rset^n} \frac{1}{m} \sum_{i=1}^m (a_i^Tx - b)^2  \quad \text{s.t.} \quad
x \in X=X_1 \cap X_2 \cap X_3, \] where $X_1= \{x: \; x \geq 0\}$,
$X_2=\{x: \; e^Tx \leq 1 \}$ and $X_3= \{x: \; a_{av}^T x \geq b
\}$.  We use 2 different real portfolio datasets:  Standard \&
Poor's 500 (SP500, with 25 stocks for 1276 days) and one dataset by
Fama and French (FF100, with 100 portfolios for 23.647 days) that is
commonly used in financial literature, see e.g. \cite{BroDau:09}. We
split all the datasets into test ($10\%$) and train ($90\%$)
partitions randomly. We set the desired return $a_{av}$ as the
average return over all assets in the training set and $b =
\text{mean}(a_{av})$. The results of this experiment are presented
in Figure 3. We plot the value of the objective function over the
datapoints in the test partition $F_\text{test}$ along the
iterations. We observe that SGD is very sensitive to both
parameters, initial stepsize ($\mu_0$) and stepsize exponent
($\gamma$), while SPP is more robust to changes in both parameters
and also performs better over one pass through data in the train
partition.
\begin{figure}[h]
\label{fig_all_comparison}
\begin{center}
\includegraphics[width=6.5cm,height=5cm]{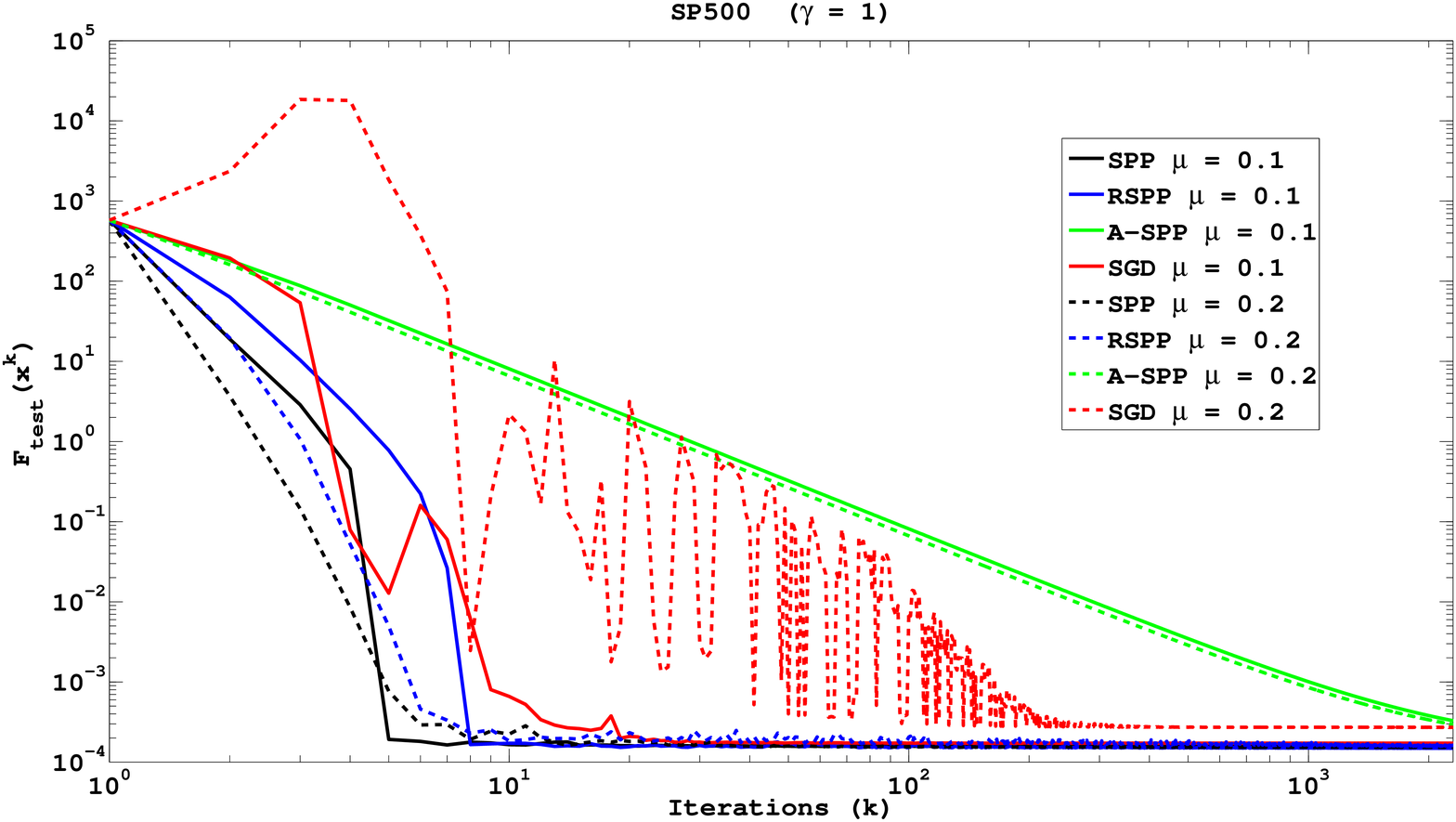}
\includegraphics[width=6.5cm,height=5cm]{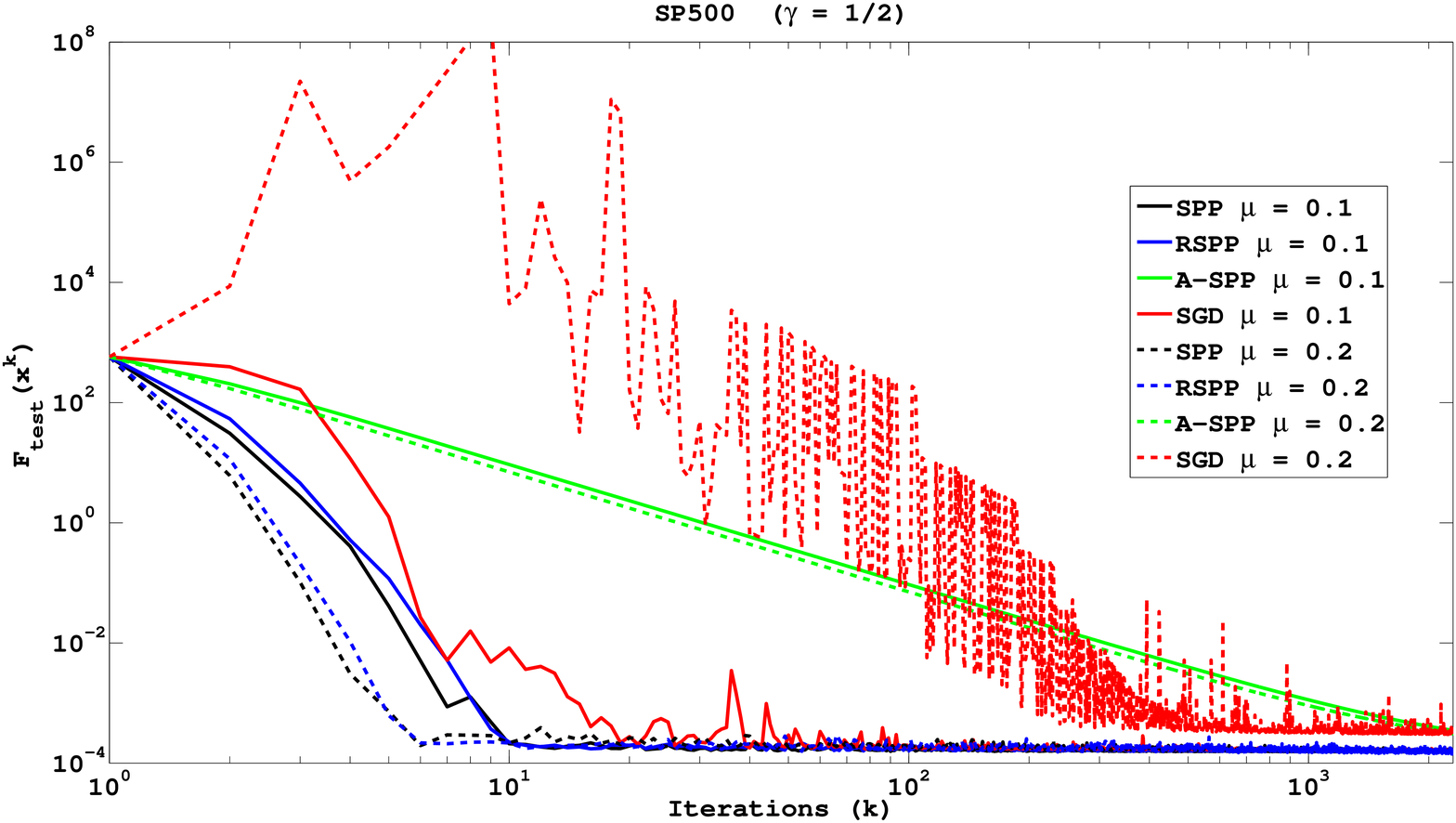}
\includegraphics[width=6.5cm,height=5cm]{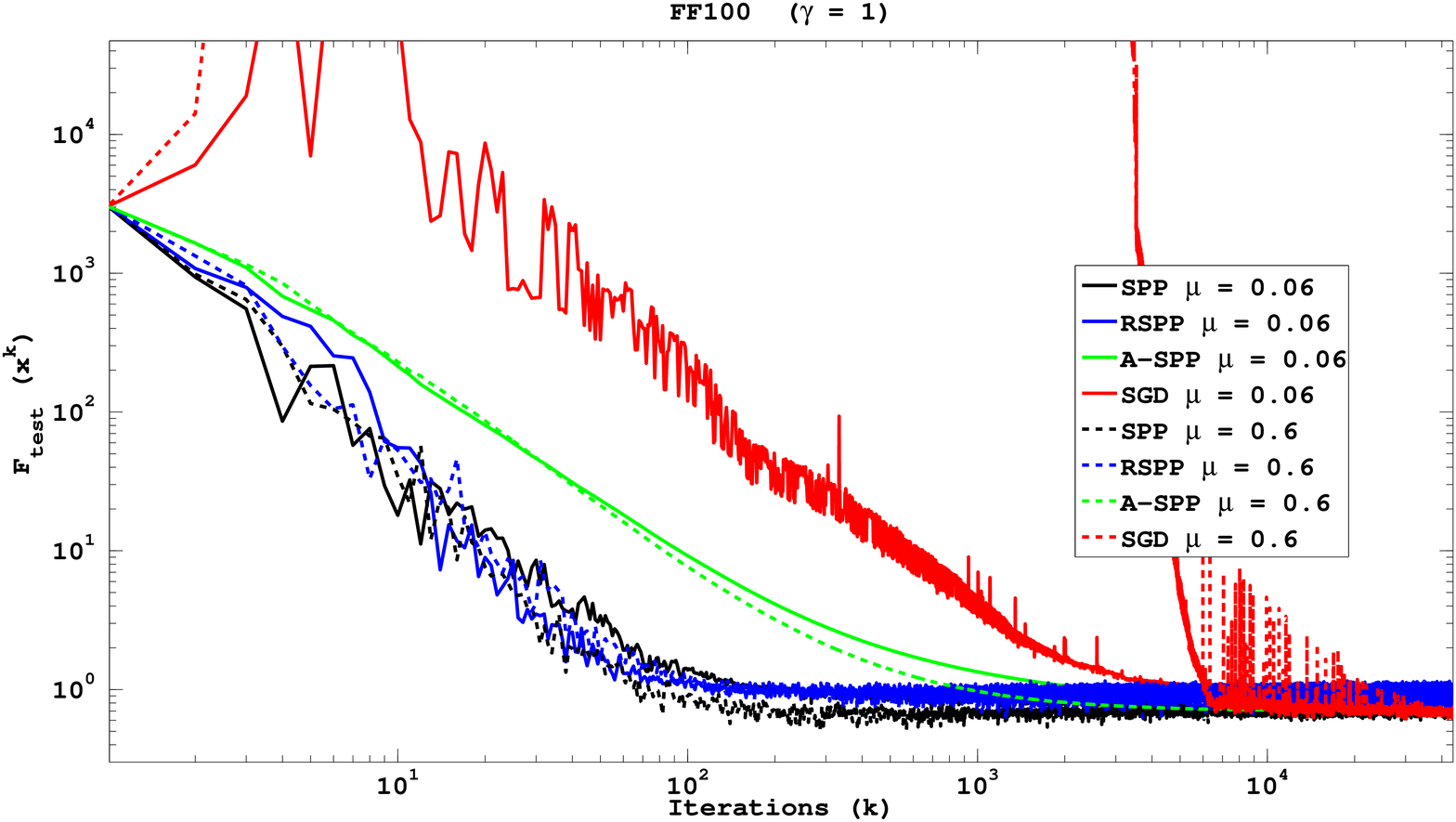}
\includegraphics[width=6.5cm,height=5cm]{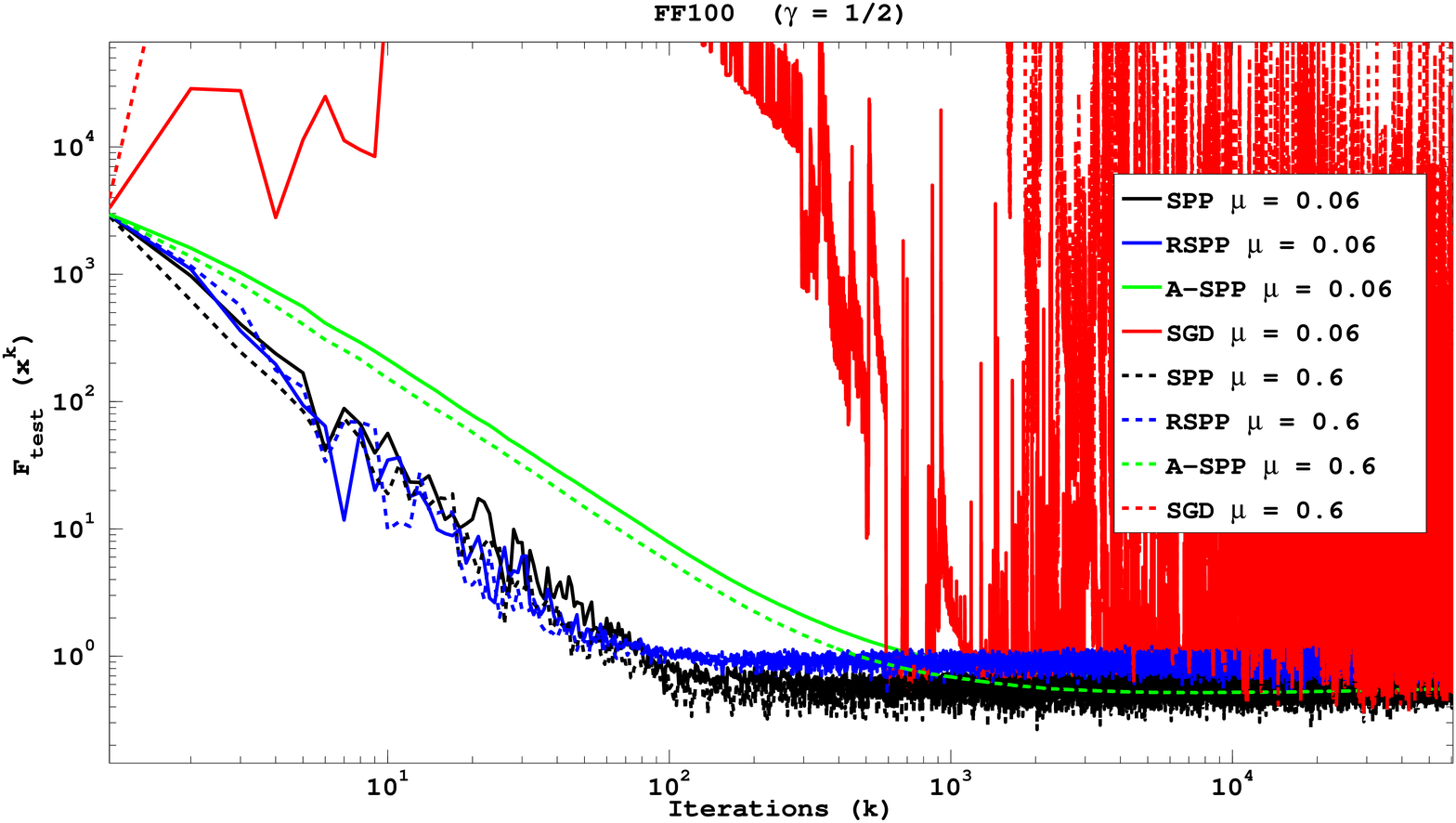}
\end{center}
\caption{Performance on real data of SPP, A-SPP, RSPP and SGD
schemes for several values of the initial stepsize $\mu_0$  and for
two values of the exponent $\gamma=1/2$ (left) and $\gamma=1$
(right): dataset SP500 (top), dataset FF100 (bottom).}
\end{figure}

%%%%%%%%%%%%%%%%%%%%%%%%%%%%%%%%%%%%%%%%%%%%%%%%%%%%%%%%%%%%%%%%%%%%%%%%%%%%%%%%%%%%%%

\section{Acknowledgments}
The research leading to these results has received funding from the
Romanian National Authority for Scientific Research (UEFISCDI),
PNII-RU-TE 2014,  project MoCOBiDS, no. 176/01.10.2015.

%%%%%%%%%%%%%%%%%%%%%%%%%%%%%%%%%%%%%%%%%%%%%%%%%%%%%%%%%

\bibliography{StochProx_JMLR}
\end{document}